\colorlet{cite}{red}
\tikzset{ 
  baseline=-2.3pt,
  text height=1.5ex, text depth=0.25ex,
  >=stealth,
  node distance=2cm,
  mid/.style={fill=white,inner sep=2.5pt},
}
\newtheoremstyle{mydef}
  {}		
  {}		
  {}		
  {}		
  {\scshape}	
  {. }		
  { }		
  {\thmname{#1}\thmnumber{ #2}\thmnote{ #3}}	
\newtheorem{theorem}{Theorem}[section]
\newtheorem*{theorem*}{Theorem}
\newtheorem{proposition}[theorem]{Proposition}
\newtheorem*{proposition*}{Proposition}
\newtheorem{lemma}[theorem]{Lemma}
\newtheorem*{lemma*}{Lemma}
\newtheorem{corollary}[theorem]{Corollary}
\newtheorem*{corollary*}{Corollary}
\theoremstyle{definition}
\newtheorem{definition}[theorem]{Definition}
\newtheorem{example}[theorem]{Example}
\newtheorem{notation}[theorem]{Notation}
\theoremstyle{remark}
\newtheorem{remark}[theorem]{Remark}
\DeclareMathOperator{\Ad}{Ad}
\DeclareMathOperator{\Ann}{Ann}
\newcommand{\ce}{\mathrel{\mathop:}=}
\author{Elizabeth Gasparim, Fabricio Valencia, Carlos Varea}
\subjclass[2010]{53D18; 14M15}
\address{}
\date{\today}
\address{E. Gasparim - Depto. Matem\'aticas, Univ. Cat\'olica del Norte, Chile. \newline
		 F. Valencia - Instituto de Matem\'aticas, Univ. de Antioquia, Colombia. \newline
		 C. Varea - Imecc -	Unicamp,  Campinas, Brasil.\newline
	etgasparim@gmail.com,  fabricioyarro@gmail.com, carlosbassanivarea@gmail.com}
\title[Invariant generalized complex geometry on maximal flag manifolds]{Invariant generalized complex geometry on maximal flag manifolds and their moduli}
\begin{document}
\maketitle

\begin{abstract}
We describe moduli spaces  of invariant generalized  complex structures 
and moduli spaces of  invariant generalized K\"ahler structures
on maximal flag manifolds under $B$-transformations. We give an alternative description of the moduli space of generalized 
 complex structures  using  pure spinors, and describe a cell decomposition of these moduli spaces induced by the action of the Weyl group.
\end{abstract}

\tableofcontents
\section{Introduction}
Generalized complex geometry is a theory that allows us to treat both complex  and symplectic geometry within a single framework. 
This theory was first introduced in Hitchin's  seminal paper \cite{H} where the notion of generalized complex structure was presented 
as a geometric structure on even-dimensional manifolds which allows for generalizations of the notions of Calabi--Yau manifold, symplectic manifold, and K\"ahler manifold. Further results  were then developed by 
 Gualtieri \cite{G3} and Cavalcanti \cite{Ca2}.
Currently, generalized complex geometry plays an important role in applications to  various aspects of  string theory, see for instance \cite{Gr} and \cite{BLPZ},
and to problems related to mirror symmetry and T-duality \cite{CG2}. 
 
The aim of this paper is to study some geometric properties of all invariant generalized complex structures on maximal flag manifolds which were determined by the third author and San Martin in \cite{VS}. Other examples of generalized complex structures constructed using Lie theory can be found in \cite{CG} were the authors gave a classification of all 6-dimensional nilmanifolds admitting generalized complex structures and in \cite{AD} were the authors described a regular class of invariant generalized complex structures on a real semisimple Lie group.

Let $G$ be a semisimple Lie group with Lie algebra $\mathfrak{g}$. A {\it maximal flag manifold} of $G$ 
 is a homogeneous space $\mathbb{F}=G/P$ where $P$ is a Borel subgroup  of $G$. If $U$ is a compact real form in $G$ then $U$ acts transitively on $\mathbb{F}$ so that we can also identify the homogeneous space with $\mathbb{F}=U/T$ where $T=P\cap U$ is a maximal torus in $U$. In this paper we will deal with generalized complex structures on $\mathbb{F}$ that are $U$-invariant with respect the adjoint action. Because 
 of invariance, these structures are completely determined by their values at the origin $b_0$ of $\mathbb{F}$. Thus, every invariant generalized almost complex structure is determined by an orthogonal complex structure $\mathbb{J}:\mathfrak{n}^-\oplus(\mathfrak{n}^-)^\ast\to \mathfrak{n}^-\oplus(\mathfrak{n}^-)^\ast$ that commutes with  the adjoint action of $T$ on $\mathfrak{n}^-\oplus(\mathfrak{n}^-)^\ast$. 
 Here we  denote by $\mathfrak{n}^-\ce T_{b_0}\mathbb{F}=\sum_{\alpha\in\Pi^+}\mathfrak{u}_\alpha$ where $\mathfrak{u}_\alpha=(\mathfrak{g}_\alpha\oplus\mathfrak{g}_{-\alpha})\cap \mathfrak{u}$ with $\mathfrak{u}$ the Lie algebra of $U$ and $\mathfrak{g}_\alpha$ the root space associated to $\alpha\in \Pi^+$.

The paper is divided as follows. In section \ref{S:2} we review some elementary facts about generalized complex geometry which 
we will use throughout the paper. In section \ref{igcs} we introduce 
 invariant generalized complex structures on maximal flags. For each root space, the space of such structures consists of 
  a 2-dimensional family of structures of noncomplex type parametrized by a real algebraic surface in $\mathbb{R}^3$ plus two extra points corresponding to 2 structures of complex type. We summarize the necessary and sufficient  conditions found in \cite{VS} for  an invariant generalized almost complex structure
   to be integrable. In section \ref{S:4} we describe the natural action of the Weyl group associated to a root system on the space of all invariant generalized almost complex structures on a maximal flag. We show that the integrability condition is preserved by this Weyl action; see Proposition \ref{WeylIntegrability}. 

Section \ref{S:5} is devoted to studying the effects of the action by $B$-transformations on the space of invariant generalized almost complex structures. 
 We prove that, for every root space, any generalized complex structure which is not of complex type, is a 
$B$-transform of a structure of symplectic type.
We also show that the two generalized complex structures of complex type are fixed points for the action induced by $B$-transformations. Adapting these results to the general case and denoting by $\mathfrak{M}_a(\mathbb{F})$ the quotient space obtained from the set of all invariant generalized almost complex structures modulo the action by $B$-transformations, we obtain:

\begin{theorem*}[\ref{Bmoduli1}]
	Suppose that $\vert \Pi^+ \vert=d$. Then
	$$\mathfrak M_a (\mathbb F)= \prod_{\alpha\in\Pi^+} \mathfrak{M}_\alpha (\mathbb F)=(\mathbb R^\ast \cup \pm \mathbf{0})_{\alpha_1} \times \cdots \times (\mathbb  R^\ast\cup\pm \mathbf{0})_{\alpha_d},$$
	where $\alpha_j\in\Pi^+$.  Moreover, $\mathfrak{M}_a(\mathbb{F})$ admits a natural topology such that it contains an open dense subset $ (\mathbb{R}^\ast)^d$ parametrizing all invariant generalized almost complex structures of symplectic type and there are exactly $2^d$ isolated points corresponding to the structures of complex type. 
\end{theorem*}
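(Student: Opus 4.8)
The plan is to reduce the statement to a single root space and then reassemble by taking products. First I would observe that both the space of invariant generalized almost complex structures on $\mathbb F$ and the action of invariant $B$-transformations on it split along the decomposition $\mathfrak n^-=\bigoplus_{\alpha\in\Pi^+}\mathfrak u_\alpha$. Indeed, such a structure is an orthogonal complex structure $\mathbb J$ on $\mathfrak n^-\oplus(\mathfrak n^-)^\ast$ commuting with the adjoint $T$-action, and the $\mathfrak u_\alpha$ (hence also the $\mathfrak u_\alpha\oplus(\mathfrak u_\alpha)^\ast$) are pairwise non-isomorphic as real $T$-modules, since the unordered weight pairs $\{\alpha,-\alpha\}$ are distinct for distinct positive roots; by Schur's lemma $\mathbb J$ therefore preserves each $\mathfrak u_\alpha\oplus(\mathfrak u_\alpha)^\ast$, so $\mathbb J=\bigoplus_\alpha\mathbb J_\alpha$ and the space of all such structures is $\prod_{\alpha\in\Pi^+}\mathcal X_\alpha$, where $\mathcal X_\alpha$ is the ``surface plus two points'' of \S\ref{igcs}. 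The same Schur-type argument shows that an invariant $B\in\Lambda^2(\mathfrak n^-)^\ast$ splits as $B=\sum_\alpha B_\alpha$ with $B_\alpha\in\Lambda^2(\mathfrak u_\alpha)^\ast$, so $e^B$ acts diagonally; consequently $\mathfrak M_a(\mathbb F)=\prod_{\alpha}\mathfrak M_\alpha(\mathbb F)$ with $\mathfrak M_\alpha(\mathbb F)=\mathcal X_\alpha/(B\text{-transformations})$, and it remains to identify one factor.

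Next I would identify $\mathfrak M_\alpha(\mathbb F)$ with $\mathbb R^\ast\cup\pm\mathbf 0$ using the results of \S\ref{S:5} recalled in the introduction: every element of $\mathcal X_\alpha$ not of complex type is a $B$-transform of a structure of symplectic type, while the two complex-type structures are $B$-fixed and thus contribute two distinct points $+\mathbf 0$ and $-\mathbf 0$. On the two-dimensional space $\mathfrak u_\alpha$ the symplectic-type structures are exactly the $\mathbb J_{s\omega_0}$ with $s\in\mathbb R^\ast$ and $\omega_0$ the invariant area form, so there is a surjection $\mathbb R^\ast\cup\pm\mathbf 0\twoheadrightarrow\mathfrak M_\alpha(\mathbb F)$; to see it is injective I would note that an invariant $B$-field on $\mathfrak u_\alpha$ is a multiple $b\omega_0$, and a short two-dimensional computation shows $e^{b\omega_0}\cdot\mathbb J_{s\omega_0}$ is of symplectic type only when $b=0$. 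Hence distinct symplectic structures lie in distinct $B$-orbits and each non-complex orbit meets the symplectic locus exactly once, giving $\mathfrak M_\alpha(\mathbb F)\cong\mathbb R^\ast\cup\pm\mathbf 0$; multiplying over $\alpha\in\Pi^+$ yields the displayed formula.

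For the topological statement I would put on $\mathcal X=\prod_\alpha\mathcal X_\alpha$ the topology induced from $\End(\mathfrak n^-\oplus(\mathfrak n^-)^\ast)$ (equivalently, the one coming from the parametrization of \S\ref{igcs}) and give $\mathfrak M_a(\mathbb F)$ the quotient topology. The non-complex structures form a saturated open subset, its complement being the finite set of complex-type structures, so its image $(\mathbb R^\ast)^d$ is open; it is dense because in each factor $\pm\mathbf 0$ lies in the closure of the non-complex part of the surface $\mathcal X_\alpha$; and the structures that are of complex type in \emph{every} root space are precisely the $2^d$ tuples $(\pm\mathbf 0,\dots,\pm\mathbf 0)$, which are the $2^d$ distinguished (``isolated'') points.

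The main obstacle I anticipate lies in the injectivity of the second step — ruling out that two distinct symplectic structures, or a symplectic structure and a nontrivial $B$-transform of it, become identified in the quotient — and in pinning down the natural topology of the last step so that $(\mathbb R^\ast)^d$ is simultaneously open and dense while the complex-type locus is singled out; both ultimately reduce to explicit bookkeeping with the algebraic surface of \S\ref{igcs} and its two distinguished points.
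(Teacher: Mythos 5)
Your overall route is the same as the paper's: split everything along the root-space decomposition $\mathfrak n^-=\bigoplus_{\alpha\in\Pi^+}\mathfrak u_\alpha$ (the paper imports the block-diagonal form of $\mathbb J$ and of invariant $B$'s from \cite{VS} and from the explicit construction in Theorem \ref{GlobalBField} rather than re-deriving it by Schur's lemma, but that is a cosmetic difference), identify each factor $\mathfrak M_\alpha(\mathbb F)$ with $\mathbb R^\ast\cup\pm\mathbf 0$ exactly as in Proposition \ref{AmoduloB} (every noncomplex structure is a $B$-transform of the symplectic structure with the same $x_\alpha$, the two complex structures are $B$-fixed by Proposition \ref{BFieldComplex}, and $x_\alpha$ is unchanged by $B$-transformations so distinct symplectic structures remain inequivalent), and then assemble the product via Theorem \ref{GlobalBField}. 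All of that part is correct and matches the paper.

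The genuine error is in the topological step. You justify density of $(\mathbb R^\ast)^d$ by asserting that ``in each factor $\pm\mathbf 0$ lies in the closure of the non-complex part of the surface $\mathcal X_\alpha$.'' This is false, and it is precisely the opposite of what the paper establishes in Remarks \ref{topoS} and \ref{topology1}: every noncomplex structure has vanishing $(1,2)$, $(2,1)$, $(3,4)$, $(4,3)$ entries, whereas $\pm\mathcal J_0$ has those entries equal to $\mp 1$, so no sequence of noncomplex structures converges to $\pm\mathcal J_0$ in the matrix topology (equivalently, on the surface $a_\alpha^2-x_\alpha y_\alpha=-1$ one has $x_\alpha y_\alpha\geq 1$, so $x_\alpha\to 0$ forces $\vert y_\alpha\vert\to\infty$ and there is no limit). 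That non-convergence is exactly what makes the $2^d$ complex-type classes \emph{isolated}, which you also assert; the two claims cannot both hold, since an isolated point is never in the closure of its complement. So your argument for density rests on a false closure statement and is internally inconsistent with your isolation statement. Openness of $(\mathbb R^\ast)^d$ does survive (the noncomplex locus is a saturated open set), and the isolation of the $2^d$ points should be argued as in Remark \ref{topology1}; but the density assertion cannot be obtained from the convergence claim you use.
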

For every subset $\Theta$ of a simple root system $\Sigma$ of $\mathfrak{g}$ with $\vert \Theta \vert=r$, we will show a natural way of defining an $r$-dimensional family inside $\mathfrak M(\mathbb F)$ of invariant generalized complex structures on $\mathbb{F}$ of type $k=d-\vert\langle\Theta \rangle^+\vert$ (see Lemma \ref{ThetaCells}). This will be called a $\Theta$-cell and denoted by ${\bf c}^r$. If $\mathcal{W}$ denotes the Weyl group and for every $w\in\mathcal{W}$ we denote by $w\cdot {\bf c}^i$ the $\Theta$-cell of dimension $i$ determined by $w\cdot \Theta$, we prove:
\begin{theorem}[\ref{IntModuli}]
	The moduli space of invariant generalized complex structures on $\mathbb F$ admits the following  cell decomposition:
	$$\mathfrak M (\mathbb F)=\bigsqcup_{i=1}^{d} \bigsqcup_{w\in \mathcal W} w\cdot {\bf c}^i.$$
\end{theorem}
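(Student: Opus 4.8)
The moduli space $\mathfrak{M}(\mathbb{F})$ is the subset of $\mathfrak{M}_a(\mathbb{F})$ consisting of those classes whose canonical representative --- symplectic type along the noncomplex directions, complex type along the remaining ones --- is integrable. So the plan is to locate this integrability locus inside the explicit description $\mathfrak{M}_a(\mathbb{F})=\prod_{\alpha\in\Pi^+}(\mathbb{R}^\ast\cup\pm\mathbf{0})_\alpha$ of Theorem \ref{Bmoduli1}, and to match it with the union of $\Theta$-cells and their Weyl translates. Write a point of $\mathfrak{M}_a(\mathbb{F})$ as a tuple; let $C\subseteq\Pi^+$ be the set of roots carrying a complex-type factor, equipped with its sign function $\epsilon\colon C\to\{\pm 1\}$ (the choice of $+\mathbf{0}$ or $-\mathbf{0}$), and let $S=\Pi^+\setminus C$ carry the real parameters $\lambda_\alpha\in\mathbb{R}^\ast$, $\alpha\in S$. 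First I would substitute this data into the integrability conditions recorded in Section \ref{igcs} (those of \cite{VS}), which are organised around triples $\alpha,\beta,\alpha+\beta\in\Pi^+$.

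The first, and main, step is purely combinatorial. I claim that the integrability conditions involving at least one complex-type direction force the signed set $\widetilde{C}=\{\epsilon_\alpha\alpha:\alpha\in C\}$ to be, for some $w\in\mathcal{W}$ and some $\Theta\subseteq\Sigma$, equal to $w(\Pi^+\setminus\langle\Theta\rangle^+)$ --- that is, to be the set of roots in the nilradical of the parabolic $w\mathfrak{p}_\Theta w^{-1}$, where $\mathfrak{p}_\Theta\subseteq\mathfrak{g}^{\mathbb{C}}$ is the standard parabolic determined by $\Theta$ --- and then to force $S$ to be the set of root spaces $\mathfrak{u}_\alpha$ with $\alpha\in w\langle\Theta\rangle^+$. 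This is the parabolic analogue of the classical statement that integrable invariant complex structures on $\mathbb{F}$ correspond to Weyl chambers: one checks that $\widetilde{C}$ is closed under addition of roots modulo ``landing in the symplectic slot'', and that $\epsilon$ is coherent along such additions, which is precisely the requirement that $\Pi^+\setminus C$ be the positive system of a root subsystem of the form $w\langle\Theta\rangle$ and that $C$ be cut out by a parabolic conjugate to the standard $\mathfrak{p}_\Theta$.

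Once $\widetilde{C}=w(\Pi^+\setminus\langle\Theta\rangle^+)$ is fixed, the remaining integrability conditions involve only the parameters $(\lambda_\alpha)_{\alpha\in S}$ and, after transporting by $w$, reduce to the closedness $d\omega=0$ of the corresponding invariant $2$-form on the sub-flag attached to $\Theta$. By Lemma \ref{ThetaCells} the solution set of these equations is exactly the $r$-dimensional $\Theta$-cell $\mathbf{c}^r$ with $r=|\Theta|$ (of type $d-|\langle\Theta\rangle^+|$), and by Proposition \ref{WeylIntegrability} its image under $w$ is the cell $w\cdot\mathbf{c}^r$, still contained in $\mathfrak{M}(\mathbb{F})$. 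Hence every point of $\mathfrak{M}(\mathbb{F})$ lies in some $w\cdot\mathbf{c}^i$; conversely Lemma \ref{ThetaCells} together with Proposition \ref{WeylIntegrability} shows that every $w\cdot\mathbf{c}^i$ consists of integrable structures, so $\mathfrak{M}(\mathbb{F})=\bigcup_{i}\bigcup_{w\in\mathcal{W}}w\cdot\mathbf{c}^i$.

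It remains to see the union is disjoint. For a class $\mathbb{J}\in\mathfrak{M}(\mathbb{F})$ the locus $C$, the sign function $\epsilon$, and the parameters on $S$ are intrinsic; by the first step $(C,\epsilon)$ recovers the parabolic $w\mathfrak{p}_\Theta w^{-1}$, hence the subset $\Theta\subseteq\Sigma$ together with the class of $w$ modulo the stabiliser in $\mathcal{W}$ of $\mathfrak{p}_\Theta$, hence the cell $w\cdot\mathbf{c}^{|\Theta|}$ itself; and then the parameters on $S$ place $\mathbb{J}$ in that cell and in no other. Thus two cells are either equal (when they carry the same $\Theta$ and the same class of $w$) or disjoint, and --- reading the labelling by $w\in\mathcal{W}$ with equal cells identified --- one obtains the asserted decomposition $\mathfrak{M}(\mathbb{F})=\bigsqcup_i\bigsqcup_{w\in\mathcal{W}}w\cdot\mathbf{c}^i$. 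The hard part is the first step: turning the triple-wise integrability equations of \cite{VS} into the global statement that the complex-type locus must be a Weyl translate of the nilradical of a standard parabolic, which forces one to keep careful track of the signs $\epsilon$ and of the way complex-type and symplectic-type directions interact in those equations.
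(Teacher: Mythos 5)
Your proposal is correct and follows essentially the same route as the paper: your ``main combinatorial step'' (that the signed complex-type locus is a Weyl translate of $\Pi^+\setminus\langle\Theta\rangle^+$ for some $\Theta\subseteq\Sigma$) is exactly Theorem \ref{theta} together with the definition of $\Pi^+_J$, which the paper imports from \cite{VS} rather than reproving, and your remaining steps are Lemma \ref{ThetaCells}, Proposition \ref{WeylIntegrability}, and the transitivity of $\mathcal W$ on simple root systems, just as in the text. Your explicit treatment of disjointness (identifying when $w\cdot\mathbf{c}^i=w'\cdot\mathbf{c}^i$ via the stabiliser of the parabolic) is a point the paper leaves implicit, but it does not change the argument.
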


Let $\omega$ denote the Kirillov--Kostant--Souriau symplectic form on $\mathbb{F}$. Let $\mathbb{J}$ be an invariant generalized almost complex structure on  $\mathbb{F}$. We decompose the set of positive roots as $\Pi^+= \Theta_{nc}\sqcup\Theta_{c}$ where $\mathbb{J}$ restricted to a root space is of noncomplex type for all $\alpha\in \Theta_{nc}$ and of complex type for all $\alpha\in \Theta_{c}$. We get:

\begin{proposition*}[\ref{Spinor}]
The invariant pure spinor line $K_\mathcal{L}<\bigwedge^\bullet (\mathfrak{n}^-)^\ast\otimes \mathbb{C}$ associated to $\mathbb{J}$ is generated by
	$$ \varphi_\mathcal{L}=e^{\sum_{\alpha\in \Theta_{nc}}(B_\alpha +i\omega_\alpha)}\bigwedge_{\alpha\in \Theta_{c}}\Omega_\alpha,$$
where
\begin{itemize}		 
\item 	$B_\alpha=-\dfrac{a_\alpha}{x_\alpha}S_\alpha^\ast\wedge A_\alpha^\ast$ and  $\omega_\alpha=-\dfrac{k_\alpha}{x_\alpha}\omega_{b_0}|_{\mathfrak{u}_\alpha}$ with $k_\alpha=\dfrac{1}{2i\langle H,H_\alpha\rangle}$ for all $\alpha\in \Theta_{nc}$, and
\item  $\Omega_\alpha\in \wedge^{1,0}\mathfrak{u}_\alpha^\ast$ defines a complex structure on $\mathfrak{u}_\alpha$ for all $\alpha\in \Theta_{c}$.
\end{itemize}
\end{proposition*}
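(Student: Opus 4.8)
The plan is to reduce the statement to a root-space-by-root-space computation and then assemble the pieces by the multiplicativity of pure spinor lines under orthogonal direct sums. Since $\mathbb{J}$ is $T$-invariant, it preserves each summand $\mathfrak{u}_\alpha\oplus\mathfrak{u}_\alpha^\ast$ of the splitting $\mathfrak{n}^-\oplus(\mathfrak{n}^-)^\ast=\bigoplus_{\alpha\in\Pi^+}(\mathfrak{u}_\alpha\oplus\mathfrak{u}_\alpha^\ast)$, so its $+i$-eigenspace decomposes as $\mathcal{L}=\bigoplus_{\alpha\in\Pi^+}\mathcal{L}_\alpha$, with $\mathcal{L}_\alpha\subset(\mathfrak{u}_\alpha\oplus\mathfrak{u}_\alpha^\ast)\otimes\mathbb{C}$ the $+i$-eigenspace of $\mathbb{J}|_{\mathfrak{u}_\alpha}$. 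First I would recall the standard fact that if $\mathcal{L}=\mathcal{L}_1\oplus\mathcal{L}_2$ with $\mathcal{L}_j$ maximal isotropic in $V_j\oplus V_j^\ast$, then the annihilator lines satisfy $K_\mathcal{L}=K_{\mathcal{L}_1}\wedge K_{\mathcal{L}_2}$ inside $\bigwedge^\bullet(V_1\oplus V_2)^\ast$. Applying this inductively over the decomposition $\Pi^+=\Theta_{nc}\sqcup\Theta_c$ reduces the proposition to identifying the canonical line of $\mathbb{J}|_{\mathfrak{u}_\alpha}$ for each single root $\alpha$.

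For $\alpha\in\Theta_c$ the restriction $\mathbb{J}|_{\mathfrak{u}_\alpha}$ is, by definition, induced by a genuine complex structure on the $2$-dimensional space $\mathfrak{u}_\alpha$; its pure spinor line is the top exterior power of the holomorphic cotangent directions, which here is simply $\bigwedge^{1,0}\mathfrak{u}_\alpha^\ast=\mathbb{C}\,\Omega_\alpha$. For $\alpha\in\Theta_{nc}$ I would invoke the result established in Section \ref{S:5}: any invariant generalized almost complex structure on $\mathfrak{u}_\alpha$ which is not of complex type is a $B$-transform of a structure of symplectic type, say $\mathbb{J}|_{\mathfrak{u}_\alpha}=e^{B_\alpha}\,\mathbb{J}_{\omega_\alpha}\,e^{-B_\alpha}$ for a real $2$-form $B_\alpha$ and a symplectic form $\omega_\alpha$ on $\mathfrak{u}_\alpha$. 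Since the canonical line of a symplectic-type structure is generated by $e^{i\omega_\alpha}$, and a $B$-transformation rescales pure spinors by $e^{B_\alpha}\wedge(-)$, the canonical line of $\mathbb{J}|_{\mathfrak{u}_\alpha}$ is generated by $e^{B_\alpha}\wedge e^{i\omega_\alpha}=e^{B_\alpha+i\omega_\alpha}$; here $B_\alpha$ and $\omega_\alpha$ both lie in the one-dimensional space $\mathbb{C}\,S_\alpha^\ast\wedge A_\alpha^\ast$, so their exponentials indeed combine into the exponential of the sum.

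The remaining work is to pin down the coefficients. I would take the explicit parametrization of the noncomplex structures on $\mathfrak{u}_\alpha$ from Section \ref{igcs}, write the $B$-transformation exhibited in Section \ref{S:5} that realizes such a structure as a $B$-transform of symplectic type, and read off $B_\alpha$ and $\omega_\alpha$ in the basis $\{S_\alpha,A_\alpha\}$. Comparing with the normalization of the Kirillov--Kostant--Souriau form, $\omega_{b_0}|_{\mathfrak{u}_\alpha}$, and the value $\langle H,H_\alpha\rangle$, gives $B_\alpha=-\frac{a_\alpha}{x_\alpha}S_\alpha^\ast\wedge A_\alpha^\ast$ and $\omega_\alpha=-\frac{k_\alpha}{x_\alpha}\omega_{b_0}|_{\mathfrak{u}_\alpha}$ with $k_\alpha=\frac{1}{2i\langle H,H_\alpha\rangle}$. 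I expect this matching of the two parametrizations to be the only genuinely laborious step; everything else is formal.

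Finally, wedging the per-root generators and using that the $2$-forms $B_\alpha+i\omega_\alpha$ for distinct $\alpha$ involve disjoint sets of variables, one obtains
$$\varphi_\mathcal{L}=\bigwedge_{\alpha\in\Theta_{nc}}e^{B_\alpha+i\omega_\alpha}\ \wedge\ \bigwedge_{\alpha\in\Theta_c}\Omega_\alpha=e^{\sum_{\alpha\in\Theta_{nc}}(B_\alpha+i\omega_\alpha)}\bigwedge_{\alpha\in\Theta_c}\Omega_\alpha.$$
Invariance of the resulting line under the adjoint $T$-action is automatic: each factor is built from $\mathbb{J}|_{\mathfrak{u}_\alpha}$, which commutes with that action, and the assignment $\mathcal{L}\mapsto K_\mathcal{L}$ is natural, so $K_\mathcal{L}=\mathbb{C}\,\varphi_\mathcal{L}$ is an invariant pure spinor line in $\bigwedge^\bullet(\mathfrak{n}^-)^\ast\otimes\mathbb{C}$, as claimed.
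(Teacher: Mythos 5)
Your argument is correct and mirrors the paper's own route: the paper likewise reduces to the root-space decomposition $\mathcal{L}=\bigoplus_{\alpha\in\Pi^+}\mathcal{L}_\alpha$, identifies the per-root generators ($\Omega_\alpha$ for complex type, $e^{B_\alpha+i\omega_\alpha}$ for noncomplex type via Propositions \ref{BFieldSymplectic} and \ref{BFieldComplex}, recorded as Lemma \ref{SpinorRoots}), and assembles them with the multiplicativity of pure spinor lines under direct sums. The only cosmetic difference is that you do not separate the case $\mathbb{J}_\alpha=-\mathcal{J}_0$, whose generator is $\overline{\Omega_\alpha}\in\wedge^{0,1}\mathfrak{u}_\alpha^\ast$, but this is absorbed into the choice of complex structure on $\mathfrak{u}_\alpha$ exactly as in the statement of the proposition.
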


An invariant pure spinor
 $\widetilde{\varphi_\mathcal{L}}\in \bigwedge^k T^\ast\mathbb{F}\otimes \mathbb{C}$ determined by $\mathbb{J}$ can be defined as
$$\widetilde{\varphi_\mathcal{L}}_x(X_1(x),\cdots, X_k(x))=\varphi_\mathcal{L}(\Ad(g^{-1})_{\ast,x}X_1(x),\cdots,\Ad(g^{-1})_{\ast,x}X_k(x)),$$
for $x=gT\in \mathbb{F}=U/T$ with $g\in U$.

In section \ref{S:6} we classify all invariant generalized K\"ahler structures on $\mathbb{F}$. We show that, for every root space, a generalized K\"ahler structure is given by a pair formed by a structure of complex type together with the $B$-transformation of a structure of symplectic type with the same orientation (see Proposition  \ref{Kahlerroots}). This allows us to prove that, in general:

\begin{theorem*}[\ref{KahlerClassification}]
Let $(\mathbb{J},\mathbb{J} ')$ be an invariant generalized Kähler structure on $\mathbb{F}$. If $\mathbb{J}_{\alpha}$ is of noncomplex type for some positive root $\alpha$, then $\mathbb{J}_{\beta}$ is of noncomplex type for every positive root $\beta$. In particular, every invariant generalized Kähler structure on $\mathbb{F}$ is always obtained from a complex structure together with a $B$-transformation of a symplectic structure.
\end{theorem*}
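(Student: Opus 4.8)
The plan is to reduce the statement to showing that a certain $\{0,1\}$-valued function on $\Pi^{+}$ is constant, and then to prove its constancy from the integrability of the two members of the pair together with the connectedness of the root system. To set it up, I would unpack Proposition~\ref{Kahlerroots} one root space at a time: for each $\alpha\in\Pi^{+}$ exactly one of $\mathbb{J}_{\alpha},\mathbb{J}'_{\alpha}$ is of complex type, the other being a $B$-transform of a symplectic structure on $\mathfrak{u}_{\alpha}$ (with the same orientation). Define $\sigma\colon\Pi^{+}\to\{0,1\}$ by $\sigma_{\alpha}=0$ if $\mathbb{J}_{\alpha}$ is of noncomplex type (equivalently $\mathbb{J}'_{\alpha}$ of complex type) and $\sigma_{\alpha}=1$ otherwise, and put $S:=\sigma^{-1}(0)$, $S':=\sigma^{-1}(1)$, so that $S=\Theta_{nc}(\mathbb{J})$, $S'=\Theta_{nc}(\mathbb{J}')$ and $\Pi^{+}=S\sqcup S'$. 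The theorem is equivalent to $S=\emptyset$ or $S=\Pi^{+}$: if $\sigma\equiv 0$ then $\mathbb{J}$ is of noncomplex type on every root space (so $\Theta_{c}=\emptyset$), and by Proposition~\ref{Spinor} its pure spinor is $e^{B+i\omega}$ with $B=\sum_{\alpha}B_{\alpha}$ and $\omega=\sum_{\alpha}\omega_{\alpha}$ nondegenerate, i.e.\ $\mathbb{J}$ is an honest $B$-transform of a symplectic structure, while $\mathbb{J}'$ is of complex type everywhere, hence a complex structure; the case $\sigma\equiv 1$ is the same with the roles of $\mathbb{J}$ and $\mathbb{J}'$ interchanged, and both give the asserted conclusions.

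Since $(\mathbb{J},\mathbb{J}')$ is generalized K\"ahler, both $\mathbb{J}$ and $\mathbb{J}'$ are integrable, so Theorem~\ref{IntModuli} applies to each. I would use it to record the shape of $S$ and of $S'$: every integrable invariant generalized almost complex structure lies in a Weyl translate of a $\Theta$-cell, and, tracking the Weyl action on the root spaces from Section~\ref{S:4}, the noncomplex locus of any structure in such a cell has the form $W\cap\Pi^{+}$ for some linear subspace $W$ of the real span of the roots (for an untranslated $\Theta$-cell, $W=\operatorname{span}_{\mathbb{R}}(\Theta)$ and the locus is $\langle\Theta\rangle^{+}$). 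Hence $S=W\cap\Pi^{+}$ and $S'=W'\cap\Pi^{+}$ for subspaces $W,W'$. Now the key point is purely combinatorial: for any positive roots with $\alpha+\beta=\gamma$, the function $\sigma$ is constant on $\{\alpha,\beta,\gamma\}$. Indeed if $\alpha,\beta\in S$ then $\gamma=\alpha+\beta\in W$, so $\gamma\in W\cap\Pi^{+}=S$, and symmetrically with $S'$; while if $\alpha,\gamma\in S$ then $\beta=\gamma-\alpha\in W$, so $\beta\in W\cap\Pi^{+}=S$ (a subspace is closed under subtraction), and likewise in the remaining cases --- running through the six non-constant sign patterns $(\sigma_{\alpha},\sigma_{\beta},\sigma_{\gamma})$ in this way eliminates them all. (Alternatively, these triple constraints should be readable directly off the integrability conditions recalled in Section~\ref{igcs}, bypassing the cell decomposition.)

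It then remains to pass from ``constant on triples'' to ``globally constant'' using connectedness of the root system: any two positive roots are joined by a chain inside $\Pi^{+}$ whose consecutive terms differ by a simple root (in the irreducible case, climb each positive root up to the highest root $\theta$), so each consecutive pair lies in a triple $\alpha+\beta=\gamma$ and $\sigma$ is forced to be constant along the chain. Thus $\sigma$ is constant and the theorem follows. I expect the main obstacle to be the bookkeeping flagged in the previous paragraph, namely checking that a Weyl-translated $\Theta$-cell still has noncomplex locus of the form $W\cap\Pi^{+}$: a Weyl group element need not preserve the chosen positive system, so one must keep careful track of the identifications $\mathfrak{u}_{\delta}=\mathfrak{u}_{-\delta}$ when transporting the cell; everything else is either quoted from earlier results or is elementary root-system combinatorics. (For $\mathfrak{g}$ only semisimple the same argument yields constancy of $\sigma$ on each irreducible component of $\Pi^{+}$; on a product of two flags there do exist generalized K\"ahler structures that are complex along one factor and symplectic along the other, so the statement as phrased is really the statement for $\mathfrak{g}$ simple.)
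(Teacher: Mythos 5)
Your proof is correct, and its skeleton --- (i) on each root space exactly one member of the pair is of complex type, (ii) the resulting $\{0,1\}$-labelling of $\Pi^+$ is constant on every triple $(\alpha,\beta,\alpha+\beta)$, (iii) connectivity of the positive roots under such triples forces global constancy --- matches the paper's, but you realize steps (ii) and (iii) quite differently. For (ii) the paper's Remark \ref{triplekahler} checks Table \ref{integrab} directly for both $\mathbb{J}$ and $\mathbb{J}'$ and rules out the mixed triples; your primary argument instead uses the shape $\langle\Theta\rangle^{+}=\operatorname{span}(\Theta)\cap\Pi^{+}$ of the noncomplex locus of each integrable structure together with closure of a subspace under addition and subtraction. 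Both work, but you can apply Theorem \ref{theta} to $\mathbb{J}$ and to $\mathbb{J}'$ separately and directly --- there is no need to route through the cell decomposition of Theorem \ref{IntModuli} or to track Weyl translates, so the bookkeeping you flag as the main obstacle evaporates; and your parenthetical alternative of reading the triple constraints off the integrability table is precisely what the paper does. For (iii) the paper proves Lemma \ref{simplerootK} by a case analysis over Dynkin types ($A/B/C$ via the linear chain of the diagram, $D$ and $E$ via the branch node, and so on) and then Lemma \ref{simplerootK2} and the theorem by induction on height; your single chain climbing each positive root to the highest root (equivalently, descending by height to a simple root and using adjacency in the connected Dynkin diagram) replaces that case analysis by a uniform, type-free argument, which is a genuine simplification. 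Your closing caveat is also well taken: the argument, like the paper's, only yields constancy on each irreducible component of the root system, and on a product of flags the statement as phrased fails, so the theorem implicitly assumes $\mathfrak{g}$ simple --- a hypothesis the paper's type-by-type proof uses without stating.
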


We denote by $\mathfrak{K}_a(\mathbb{F})\subset \mathfrak{M}_a(\mathbb{F})\times \mathfrak{M}_a(\mathbb{F})$ the set of all invariant generalized almost K\"ahler structures on $\mathbb{F}$ module the diagonal action by $B$-transformations. If $\mathbb{R}^\dag$ denotes the union of the four rays $(\lbrace \mathbf{0}\rbrace \times \mathbb{R}^+)\cup (\mathbb{R}^+\times \lbrace \mathbf{0}\rbrace)\cup (\lbrace -\mathbf{0}\rbrace \times \mathbb{R}^-)\cup (\mathbb{R}^-\times \lbrace -\mathbf{0}\rbrace)$ we have that:
\begin{theorem*}[\ref{BmoduliK2}]
	Suppose that $\vert \Pi^+ \vert=d$. Then
	$$\mathfrak K_a (\mathbb F)= \prod_{\alpha\in\Pi^+} \mathfrak{K}_\alpha(\mathbb{F})=\mathbb{R}^\dag_{\alpha_1}\times \cdots \times\mathbb{R}^\dag_{\alpha_d}=(\mathbb{R}^\dag)^d.$$
	In particular, up to action by the Weyl group $\mathcal{W}$:
	$$\mathfrak K_a (\mathbb F)/\mathcal{W}=((\lbrace \mathbf{0}\rbrace \times \mathbb{R}^+)\cup (\mathbb{R}^+\times \lbrace \mathbf{0}\rbrace))_{\alpha_1}\times \cdots \times ((\lbrace \mathbf{0}\rbrace \times \mathbb{R}^+)\cup (\mathbb{R}^+\times \lbrace \mathbf{0}\rbrace))_{\alpha_d}.$$
\end{theorem*}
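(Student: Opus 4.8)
The plan is to reduce the global statement to the root-space analysis already carried out in Proposition~\ref{Kahlerroots} and then assemble the product structure exactly as was done for $\mathfrak{M}_a(\mathbb{F})$ in Theorem~\ref{Bmoduli1}. First I would recall that an invariant generalized almost K\"ahler structure is a pair $(\mathbb{J},\mathbb{J}')$ of commuting invariant generalized almost complex structures whose product $-\mathbb{J}\mathbb{J}'$ is a positive definite metric on $\mathfrak{n}^-\oplus(\mathfrak{n}^-)^\ast$; since everything is $T$-invariant and the decomposition $\mathfrak{n}^-\oplus(\mathfrak{n}^-)^\ast=\bigoplus_{\alpha\in\Pi^+}(\mathfrak{u}_\alpha\oplus\mathfrak{u}_\alpha^\ast)$ is $\mathbb{J}$- and $\mathbb{J}'$-invariant, the whole problem splits as a product over $\alpha\in\Pi^+$, and $(\mathbb{J},\mathbb{J}')$ is generalized K\"ahler if and only if each restriction $(\mathbb{J}_\alpha,\mathbb{J}'_\alpha)$ is. By Theorem~\ref{KahlerClassification} the only possibility is that every $\mathbb{J}_\alpha$ and every $\mathbb{J}'_\alpha$ is of noncomplex type, so by the symplectic-type normal form (the ``$B$-transform of a symplectic structure'' description from Section~\ref{S:5}) each factor is governed by the pair of real parameters recorded in $\mathfrak{M}_\alpha(\mathbb{F})=\mathbb{R}^\ast\cup\pm\mathbf{0}$.

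Next I would analyze a single root space. By Proposition~\ref{Kahlerroots}, a generalized K\"ahler pair on $\mathfrak{u}_\alpha$ consists of a structure of complex type together with the $B$-transform of a symplectic structure \emph{sharing the same orientation}; after modding out by the diagonal $B$-transformation one coordinate of the pair becomes a point of $\{\mathbf{0}\}\cup\{-\mathbf{0}\}$ (the two complex-type points, distinguished by orientation) and the other becomes a point of $\mathbb{R}^\ast$ (a symplectic structure, the sign being its orientation), with the compatibility/positivity constraint forcing the two orientations to agree. Running through the four sign combinations — complex-$+$ paired with symplectic-$+$, symplectic-$+$ paired with complex-$+$, complex-$-$ with symplectic-$-$, symplectic-$-$ with complex-$-$ — gives exactly the four rays $(\{\mathbf 0\}\times\mathbb R^+)\cup(\mathbb R^+\times\{\mathbf 0\})\cup(\{-\mathbf 0\}\times\mathbb R^-)\cup(\mathbb R^-\times\{-\mathbf 0\})$, i.e. $\mathfrak{K}_\alpha(\mathbb{F})=\mathbb{R}^\dag$ sitting inside $\mathfrak{M}_\alpha(\mathbb{F})\times\mathfrak{M}_\alpha(\mathbb{F})$. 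Here I must be careful to check that no further identifications arise: the diagonal $B$-transformation has already been used to normalize the complex-type factor to $\pm\mathbf{0}$, so on the complementary symplectic factor it acts as the identity on the moduli level, and the positivity condition $-\mathbb{J}\mathbb{J}'>0$ is what pins the relative orientation and thus excludes the ``opposite-sign'' rays.

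Then I would take the product over all $d=|\Pi^+|$ root spaces. Because the generalized K\"ahler condition is checked root space by root space and the diagonal $B$-transformation acts factorwise, $\mathfrak{K}_a(\mathbb{F})=\prod_{\alpha\in\Pi^+}\mathfrak{K}_\alpha(\mathbb{F})=(\mathbb{R}^\dag)^d$, which is the first displayed equality. For the quotient by the Weyl group I would invoke the Weyl action described in Section~\ref{S:4} (and its compatibility with integrability, Proposition~\ref{WeylIntegrability}): the Weyl group acts on $\Pi^+$ together with a sign on each root, and this sign flip interchanges the two orientations, hence swaps the ray $\{\mathbf 0\}\times\mathbb R^+$ with $\{-\mathbf 0\}\times\mathbb R^-$ and $\mathbb R^+\times\{\mathbf 0\}$ with $\mathbb R^-\times\{-\mathbf 0\}$ in each factor. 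Therefore a fundamental domain for the $\mathcal{W}$-action on each $\mathbb{R}^\dag$ is the ``positive half'' $(\{\mathbf 0\}\times\mathbb R^+)\cup(\mathbb R^+\times\{\mathbf 0\})$, giving the second display.

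The main obstacle I expect is the single-root-space bookkeeping in the second paragraph: correctly tracking which parameter in the pair is the ``complex'' one and which is the ``symplectic'' one after the diagonal $B$-transformation is used up, and verifying that the positive-definiteness of $-\mathbb{J}\mathbb{J}'$ translates precisely into the orientation-matching that kills two of the four naive rays and keeps the other two. Once that local picture is nailed down — essentially a careful reading of Proposition~\ref{Kahlerroots} in the coordinates of $\mathfrak{M}_\alpha(\mathbb{F})$ — the globalization and the Weyl-quotient step are routine, mirroring the proof of Theorem~\ref{Bmoduli1}.
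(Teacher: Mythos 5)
Your overall strategy --- reduce to the root spaces $\mathfrak{u}_\alpha\oplus\mathfrak{u}_\alpha^\ast$, classify the pairs $(\mathbb{J}_\alpha,\mathbb{J}'_\alpha)$ via Proposition \ref{Kahlerroots}, take the product over $\Pi^+$, and then quotient each $\mathbb{R}^\dag$ factor by the sign flip induced by the Weyl reflections --- is exactly the paper's route, and your second and third paragraphs carry it out correctly.

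However, the first paragraph contains a false assertion: you claim that by Theorem \ref{KahlerClassification} ``the only possibility is that every $\mathbb{J}_\alpha$ and every $\mathbb{J}'_\alpha$ is of noncomplex type.'' This is precisely the configuration that is \emph{excluded}: if both $\mathbb{J}_\alpha$ and $\mathbb{J}'_\alpha$ are of noncomplex type, then $-\mathbb{J}_\alpha\mathbb{J}'_\alpha$ is a real multiple of $I_4$, which cannot have the signature $(2,2)$ required of a generalized K\"ahler metric (Remark \ref{SignatureKahler}). The correct local statement, which you yourself use in the next paragraph, is that exactly one of $\mathbb{J}_\alpha,\mathbb{J}'_\alpha$ is of complex type and the other of noncomplex type. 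Moreover, Theorem \ref{KahlerClassification} should not be invoked here at all: it concerns integrable (generalized K\"ahler) structures, whereas Theorem \ref{BmoduliK2} is a statement about the \emph{almost} K\"ahler moduli $\mathfrak{K}_a$, for which the choice of which member of the pair is of complex type may be made independently on each root space --- this independence is exactly why the answer is the full product $(\mathbb{R}^\dag)^d$ rather than something smaller. Since the erroneous sentence is not actually used downstream, your proof goes through once it is deleted, but as written it contradicts the classification on which the rest of your argument relies.
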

The topology on $\mathfrak K_a (\mathbb F)$ inherited from $\mathbb R^n$ coincides  with the   topology induced from the product 
$\mathfrak{M}_a(\mathbb{F})\times \mathfrak{M}_a(\mathbb{F})$. Let $\mathfrak{G}$ denote the set of all invariant generalized metrics on $\mathbb{F}$ module the action by $B$-field transformations. So, we show that: 
\begin{proposition*}[\ref{GereralizedMetrics}]
	Suppose that $\vert \Pi^+ \vert=d$. Then
	$$\mathfrak G (\mathbb F)= \prod_{\alpha\in\Pi^+} \mathfrak{G}_\alpha(\mathbb{F})=\mathbb{R}^\ast_{\alpha_1}\times \cdots \times\mathbb{R}^\ast_{\alpha_d}=(\mathbb{R}^\ast)^d.$$
	Moreover, up to action by the Weyl group $\mathcal{W}$, we have that $\mathfrak G (\mathbb F)/\mathcal{W}=(\mathbb{R}^+)^d$.
\end{proposition*}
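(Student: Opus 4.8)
The plan is to reduce the computation to the origin $b_0$ and to a root‑by‑root analysis, exactly as in the proofs of Theorems~\ref{Bmoduli1} and~\ref{BmoduliK2}, and then to invoke Schur's lemma for the isotropy representation of $T$. First I would recall the normal form for a generalized metric $\mathcal G$ on $\mathfrak n^-\oplus(\mathfrak n^-)^\ast$ (see, e.g., \cite{G3}): it is a self‑adjoint involution of the canonical pairing for which $\langle \mathcal G\,\cdot,\cdot\rangle$ is nondegenerate, and every such $\mathcal G$ is the $B$‑field transform of a unique one of block type determined by a metric $g$ on $\mathfrak n^-$; equivalently $\mathcal G$ corresponds to a pair $(g,B)$ with $g$ an invariant metric and $B$ a $2$‑form, the group of $B$‑field transformations acting by translation on the $B$‑component alone. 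Since this correspondence is canonical it is $\Ad(T)$‑equivariant, so an invariant $\mathcal G$ amounts to an invariant pair $(g,B)$; hence $\mathfrak G(\mathbb F)$, the invariant generalized metrics modulo $B$‑fields, is in bijection with the $\Ad(T)$‑invariant (nondegenerate symmetric) metrics $g$ on $\mathfrak n^-=\sum_{\alpha\in\Pi^+}\mathfrak u_\alpha$.

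Next I would use that the $\mathfrak u_\alpha$ are pairwise non‑isomorphic irreducible real $\Ad(T)$‑modules, each two‑dimensional and of complex type, so that the $\Ad(T)$‑equivariant endomorphisms of each $\mathfrak u_\alpha$ form a field isomorphic to $\mathbb C$, while there is no nonzero equivariant map $\mathfrak u_\alpha\to\mathfrak u_\beta$ for $\alpha\neq\beta$. By Schur's lemma an invariant metric on $\mathfrak n^-$ therefore splits as a direct sum $\bigoplus_{\alpha\in\Pi^+}g_\alpha$ in which each $g_\alpha$ is a real multiple $\mu_\alpha\langle\cdot,\cdot\rangle|_{\mathfrak u_\alpha}$ of the fixed Cartan--Killing reference form; nondegeneracy of $\mathcal G$ forces $\mu_\alpha\neq0$, the sign of $\mu_\alpha$ being the orientation datum attached to $\mathcal G$ on the root plane $\mathfrak u_\alpha$, in parallel with Proposition~\ref{Kahlerroots}. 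This yields the product decomposition
$$\mathfrak G(\mathbb F)=\prod_{\alpha\in\Pi^+}\mathfrak G_\alpha(\mathbb F)=\mathbb R^\ast_{\alpha_1}\times\cdots\times\mathbb R^\ast_{\alpha_d}=(\mathbb R^\ast)^d,$$
and I would observe that the subspace topology from $\mathfrak M_a(\mathbb F)\times\mathfrak M_a(\mathbb F)$ agrees with the one inherited from $\mathbb R^d$ by the same comparison already used in Theorems~\ref{Bmoduli1} and~\ref{BmoduliK2}.

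For the final assertion I would transport the Weyl action described in Section~\ref{S:4}: a class $w\in\mathcal W=N_U(T)/T$ acts on an invariant generalized metric by conjugation with $\Ad(n)\oplus(\Ad(n)^{-1})^\ast$ for any representative $n$, carrying $\mathfrak u_\alpha$ isometrically onto $\mathfrak u_{w\alpha}$ and hence the parameter $\mu_\alpha$ onto $\mu_{w\alpha}$, up to the sign recording the change of induced orientation — precisely as the orientation label of the generalized K\"ahler structures was flipped when passing to $\mathfrak K_a(\mathbb F)/\mathcal W$ in Theorem~\ref{BmoduliK2}. Consequently, modulo $\mathcal W$ each factor $\mathbb R^\ast_{\alpha}$ collapses onto its positive part, and one obtains $\mathfrak G(\mathbb F)/\mathcal W=(\mathbb R^+)^d$.

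The step I expect to be the main obstacle is this last orientation bookkeeping: one must verify that the residual action of $\mathcal W$ on each $\mathbb R^\ast$‑factor is exactly reversal of sign — i.e., that composing the permutation of root planes induced by $w$ with the attendant orientation changes amounts, factor by factor, to $\mu_\alpha\mapsto-\mu_\alpha$ — rather than something coarser; this is of the same nature as, and can be handled in parallel with, the orientation analysis in the proofs of Proposition~\ref{Kahlerroots} and Theorem~\ref{BmoduliK2}, via the case distinction between the two complex‑type orientations of each $\mathfrak u_\alpha$. A more routine point to check along the way is that the $(g,B)$ normal form is genuinely $\Ad(T)$‑equivariant, which is what legitimizes the reduction modulo $B$‑fields carried out at the start.
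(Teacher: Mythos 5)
Your argument is correct and reaches the same answer, but by a genuinely different route. The paper does not classify invariant generalized metrics abstractly: it takes the pairs $(\mathbb{J}_\alpha,\mathbb{J}_\alpha')$ already classified in Corollary \ref{Kahlerroots}, computes $G_\alpha=-\mathbb{J}_\alpha\mathbb{J}_\alpha'$ explicitly as a $4\times 4$ matrix, and exhibits it as $e^{-B_\alpha}\bigl(\pm\bigl(\begin{smallmatrix}0&g_\alpha^{-1}\\ g_\alpha&0\end{smallmatrix}\bigr)\bigr)e^{B_\alpha}$ with $g_\alpha=\tfrac{1}{x_\alpha}I_2$ and $B_\alpha=-\tfrac{a_\alpha}{x_\alpha}S_\alpha^\ast\wedge A_\alpha^\ast$; the coordinate on $\mathfrak{G}_\alpha(\mathbb{F})=\mathbb{R}^\ast$ is then literally the same $x_\alpha$ that parametrizes $\mathfrak{K}_\alpha(\mathbb{F})$, the block-diagonal splitting $G=\bigoplus_\alpha G_\alpha$ is read off from Remark \ref{canonical}, and the Weyl statement is inherited from the rule $x_{-\alpha}=-x_\alpha$ already used for $\mathfrak{K}_a(\mathbb{F})/\mathcal{W}$. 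You instead invoke the $(g,B)$ normal form and Schur's lemma for the isotropy representation, which buys generality (you classify all invariant generalized metrics without routing through the Kähler classification, and the splitting over root planes is forced by representation theory rather than by inspection of matrices), at the cost of two points you should make explicit. First, the normal form ``every self-adjoint involution with nondegenerate $\langle \mathcal G\,\cdot,\cdot\rangle$ is a $B$-transform of block type'' is not true for arbitrary such involutions (the $\pm1$-eigenspaces must be transverse to $\mathfrak{n}^-$ and $(\mathfrak{n}^-)^\ast$, which is automatic only when $g$ is definite); here it holds because, on each root plane, the metrics in play are $\pm$(positive definite), matching the paper's sign convention $x_\alpha\gtrless 0$. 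Second, since the paper's $\mathcal{G}$ is implicitly the set of metrics $-\mathbb{J}\mathbb{J}'$ of invariant generalized almost Kähler pairs, one still needs Corollary \ref{Kahlerroots} (or your orientation bookkeeping) to see that every $\mu_\alpha\in\mathbb{R}^\ast$ is actually realized; once that is noted the two computations agree, including the final reduction to $(\mathbb{R}^+)^d$ modulo $\mathcal{W}$, which you justify at the same level of detail as the paper does.
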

Finally, in section \ref{S:7} we give some instructive examples using the root space decomposition of 
$\mathfrak{sl}(n,\mathbb{C})$.

\section{Background on generalized complex geometry}\label{S:2}
In what follows we review some fundamental facts about generalized complex geometry which we will use throughout the paper. The notions that we will introduce can be found in \cite{H}, \cite{G1}, \cite{G2}.

\begin{definition}\label{inn} Given an $n$-dimensional vector space $V$ over $\mathbb{R}$, the space $V\oplus V^\ast$ can be equipped 
with the {\it indefinite inner product} $\langle\cdot,\cdot\rangle$ of signature $(n,n)$ given by
\begin{equation}\label{innerproduct1}
\langle X+\xi,Y+\eta \rangle =\dfrac{1}{2}(\xi(Y)+\eta(X)).
\end{equation}
A subspace $\mathcal{L}\subset V\oplus V^\ast$ is called {\it maximal isotropic} if its dimension is $n$ and $\langle v,u \rangle=0$ for all $v,u\in \mathcal{L}$. 
\end{definition}

If $\Delta$ is a vector subspace of $V$ and $\varepsilon\in \bigwedge^2\Delta^\ast$, it is easy to check that regarding $\varepsilon$ as a skew map $\varepsilon:\Delta\to \Delta^\ast$ via $X\mapsto i_X\varepsilon$, the space
$$\mathcal{L}(\Delta,\varepsilon)=\{ X+\xi \in \Delta\oplus V^\ast:\ i^\ast(\xi)=\varepsilon(X) \},$$
is  a maximal isotropic subspace of $V\oplus V^\ast$. Actually, every maximal isotropic subspace of
 $V\oplus V^\ast$ is of the form $\mathcal{L}(\Delta,\varepsilon)$. Indeed, if $\mathcal{L}\subset V\oplus V^\ast$ is  maximal isotropic then $\mathcal{L}=\mathcal{L}(\Delta,\varepsilon)$ where $\Delta=\pi_1(\mathcal{L})$ and $\varepsilon:\Delta\to\Delta^\ast$ is defined as $X\mapsto \pi_2(\pi_1^{-1}(X)\cap \mathcal{L})$. Here $\pi_1$ and $\pi_2$ denote the projections from $V\oplus V^\ast$ onto $V$ and $V^\ast$, respectively. To verify the last affirmations 
  notice that since $\mathcal{L}$ is  
maximal isotropic $\Ann (\Delta)=\mathcal{L}\cap V^\ast$ and $\Delta^\ast= V^\ast/\Ann (\Delta)$.

The integer $k=\textnormal{dim Ann}(\Delta)=n-\textnormal{dim}\pi_1(\mathcal{L})$ is an invariant associated to any maximal isotropic subspace of $V\oplus V^\ast$.
\begin{definition}
	The \emph{type} of a maximal isotropic subspace $\mathcal{L}\subset V\oplus V^\ast$ is the codimension $k$ of its projection onto $V$.
\end{definition}
The Lie algebra of infinitesimal isometries of $(V\oplus V^\ast,\langle\cdot,\cdot\rangle)$, denoted  $\mathfrak{so}(V\oplus V^\ast)$, may be identified with the space $\mathfrak{so}(V\oplus V^\ast)=\bigwedge^2(V\oplus V^\ast)=\textnormal{End}(V)\oplus \bigwedge^2V^\ast\oplus\bigwedge^2V$. Consequently, every element $B\in \bigwedge^2V^\ast$ can be viewed as $ {\tiny \left( 
\begin{array}{cc}
0 & 0\\
B & 0
\end{array}%
\right) }\in \mathfrak{so}(V\oplus V^\ast)$, and then we  consider
\begin{equation}\label{Btransform}
e^B=\exp(B)=\left( 
\begin{array}{cc}
1 & 0\\
B & 1
\end{array}%
\right)\in SO(V\oplus V^\ast).
\end{equation}
This is an orthogonal transformation sending $X+\xi\mapsto X+\xi+i_XB$, 
called  a {\it $B$-transformation}. As these maps preserve the projections to $V$, they do not affect $\Delta$ and 
\begin{equation}\label{TypeNChange}
\exp(B)\mathcal{L}(\Delta,\varepsilon)=\mathcal{L}(\Delta, \varepsilon+i^\ast B),
\end{equation}
where $i:\Delta \hookrightarrow V$ is the natural subspace inclusion. Therefore, $B$-transformations do not change the type of a maximal isotropic subspace.

The action of $V\oplus V^\ast$ on $\bigwedge^\bullet V^\ast$ given by
\begin{equation}\label{CliffordAction}
(X+\xi)\cdot \varphi=i_X\varphi+\xi\wedge \varphi
\end{equation}
extends to a spin representation of the Clifford algebra $\textnormal{CL}(V\oplus V^\ast)$ associated to the natural indefinite inner product $\langle\cdot,\cdot\rangle$. 
Accordingly, we refer to the elements of $\bigwedge^\bullet V^\ast$ as {\it spinors}.
\begin{definition}\label{purespinor}
	A spinor $\varphi$ is {\it pure} when its null space
	$$\mathcal{L}_\varphi=\{ X+\xi\in V\oplus V^\ast:\ (X+\xi)\cdot \varphi=0\},$$ 
	is maximal isotropic. The real subspace $K_{\mathcal L}$ generated by  $\varphi$ over $\mathbb R$ in  $\bigwedge^\bullet V^\ast$ is called the {\it pure spinor line} generated by $\varphi$.
\end{definition}

Identity \eqref{TypeNChange} implies that every maximal isotropic subspace $\mathcal{L}(\Delta,\varepsilon)$ may be expressed as the $B$-transform of $\mathcal{L}(\Delta,0)$ for some $B$ chosen so that $i^\ast B=-\varepsilon$. The pure spinor line with null space $\mathcal{L}(\Delta,0)$ is precisely $\textnormal{det}(\Ann(\Delta))\subset \bigwedge^kV^\ast$ where  $k$ is the codimension of $\Delta\subset V$.
\begin{proposition}\cite{C}\label{Chevalley}
	Let $\mathcal{L}(\Delta,\varepsilon)\subset V\oplus V^\ast$ be a maximal isotropic subspace, let $(\theta_1,\dots,\theta_k)$ be a basis for $\Ann (\Delta)$, and let $B\in \bigwedge^2V^\ast$ be a 2-form such that $i^\ast B=-\varepsilon$, where $i:\Delta \hookrightarrow V$ is the natural subspace inclusion. Then the pure spinor line $K_\mathcal{L}$ representing $\mathcal{L}(\Delta,\varepsilon)$ is generated by
	$$\varphi_\mathcal{L}=\exp (B)\, \theta_1\wedge\cdots\wedge \theta_k.$$
\end{proposition}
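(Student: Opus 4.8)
The plan is to reduce the statement to the two facts already assembled in the excerpt: first, the explicit description (via identity~\eqref{TypeNChange}) that $\mathcal{L}(\Delta,\varepsilon) = \exp(B)\,\mathcal{L}(\Delta,0)$ whenever $i^\ast B = -\varepsilon$; second, the observation that the pure spinor line with null space $\mathcal{L}(\Delta,0)$ is $\det(\Ann(\Delta)) = \langle \theta_1\wedge\cdots\wedge\theta_k\rangle$. So the real content is to show that the spin representation intertwines the $B$-transformation on $V\oplus V^\ast$ with the action $\varphi \mapsto \exp(B)\cdot\varphi = e^B\wedge\varphi$ on $\bigwedge^\bullet V^\ast$, in the sense that $\mathcal{L}_{e^B\wedge\varphi} = \exp(B)\,\mathcal{L}_\varphi$.

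First I would record the key equivariance lemma: for $B\in\bigwedge^2 V^\ast$ and any spinor $\psi$,
\begin{equation*}
(e^B\wedge)\;\bigl((X+\xi)\cdot\psi\bigr) = \bigl(\exp(B)(X+\xi)\bigr)\cdot (e^B\wedge\psi),
\end{equation*}
i.e.\ conjugation of the Clifford action by $e^B\wedge(-)$ implements the orthogonal transformation $\exp(B)$ of \eqref{Btransform}. This is checked directly from the formulas \eqref{CliffordAction} and \eqref{Btransform}: $\exp(B)(X+\xi) = X + \xi + i_XB$, and one verifies $i_X(e^B\wedge\psi) + (\xi + i_XB)\wedge(e^B\wedge\psi) = e^B\wedge(i_X\psi + \xi\wedge\psi)$ using $i_X(e^B) = (i_XB)\wedge e^B$ (since $e^B = \sum \frac{1}{j!}B^{\wedge j}$ and $i_X$ is a derivation) together with the fact that $B$ is even, so $i_XB\wedge e^B$ commutes past wedges correctly. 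Granting this, if $\varphi$ is pure with null space $\mathcal{L}_\varphi$, then $(Y+\eta)\cdot(e^B\wedge\varphi) = 0$ exactly when $(Y+\eta) = \exp(B)(X+\xi)$ for some $X+\xi\in\mathcal{L}_\varphi$; hence $e^B\wedge\varphi$ is again pure with $\mathcal{L}_{e^B\wedge\varphi} = \exp(B)\,\mathcal{L}_\varphi$.

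With these tools in hand the proof concludes quickly. Choosing $B$ with $i^\ast B = -\varepsilon$ as in the hypothesis, \eqref{TypeNChange} gives $\mathcal{L}(\Delta,\varepsilon) = \exp(B)\,\mathcal{L}(\Delta,0)$. Since $(\theta_1,\dots,\theta_k)$ is a basis of $\Ann(\Delta) = \mathcal{L}(\Delta,0)\cap V^\ast$, the wedge $\theta_1\wedge\cdots\wedge\theta_k$ generates $\det(\Ann(\Delta))$, which by the remark preceding the statement is the pure spinor line representing $\mathcal{L}(\Delta,0)$; one checks purity of $\theta_1\wedge\cdots\wedge\theta_k$ directly, since $(X+\xi)\cdot(\theta_1\wedge\cdots\wedge\theta_k) = i_X(\theta_1\wedge\cdots\wedge\theta_k) + \xi\wedge\theta_1\wedge\cdots\wedge\theta_k$ vanishes precisely for $X\in\Delta$ and $\xi\in\Ann(\Delta)$, i.e.\ on $\mathcal{L}(\Delta,0)$. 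Applying $e^B\wedge(-)$ and the equivariance lemma then yields that $\varphi_\mathcal{L} = \exp(B)\,\theta_1\wedge\cdots\wedge\theta_k$ is pure with null space $\exp(B)\,\mathcal{L}(\Delta,0) = \mathcal{L}(\Delta,\varepsilon)$, which is the assertion.

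The main obstacle is purely computational: verifying the equivariance identity $i_X(e^B\wedge\psi) + (i_XB)\wedge e^B\wedge\psi = e^B\wedge i_X\psi$ and tracking the parity signs so that the Clifford relations $\{X+\xi, X+\xi\} = \langle X+\xi, X+\xi\rangle$ (with the normalization in \eqref{innerproduct1}) are respected; once the sign bookkeeping is settled, everything else is formal. Alternatively, one can bypass the lemma and argue more directly by writing $\varphi_\mathcal{L} = e^B\wedge\theta_1\wedge\cdots\wedge\theta_k = \sum_j \frac{1}{j!}B^{\wedge j}\wedge\theta_1\wedge\cdots\wedge\theta_k$ and computing $(X+\xi)\cdot\varphi_\mathcal{L}$ term by term to show it vanishes iff $X\in\Delta$ and $i^\ast\xi = \varepsilon(X)$; this is essentially the same computation repackaged, and is the route I would actually write out if the equivariance lemma is not otherwise needed in the paper.
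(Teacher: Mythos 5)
The paper gives no proof of this proposition --- it is quoted from Chevalley \cite{C} --- so there is no in-text argument to compare against; I am assessing your proof on its own terms. Your overall strategy (transport the spinor line $\det(\Ann(\Delta))$ of $\mathcal{L}(\Delta,0)$ through the $B$-action on spinors) is the standard and correct one, and the direct term-by-term computation you sketch in your final paragraph does work. But both intermediate identities in your main route carry a wrong sign, and your argument only closes because the two errors cancel. The equivariance lemma is false as stated: from \eqref{CliffordAction} one gets
$$(X+\xi)\cdot(e^B\wedge\psi) \;=\; (i_XB)\wedge e^B\wedge\psi + e^B\wedge i_X\psi + \xi\wedge e^B\wedge\psi \;=\; e^B\wedge\Bigl(\bigl(\exp(B)(X+\xi)\bigr)\cdot\psi\Bigr),$$
equivalently $e^B\wedge\bigl((X+\xi)\cdot\psi\bigr)=\bigl(\exp(-B)(X+\xi)\bigr)\cdot(e^B\wedge\psi)$, so the correct conclusion is $\mathcal{L}_{e^B\wedge\varphi}=\exp(-B)\,\mathcal{L}_\varphi$, not $\exp(B)\,\mathcal{L}_\varphi$. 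If you expand the verification you propose, you will find the obstruction explicitly: your left-hand side equals $e^B\wedge\bigl(i_X\psi+\xi\wedge\psi+2(i_XB)\wedge\psi\bigr)$, with an extra term $2\,e^B\wedge(i_XB)\wedge\psi$.

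The second sign error is in your use of \eqref{TypeNChange}: with $\varepsilon=0$ it gives $\exp(B)\,\mathcal{L}(\Delta,0)=\mathcal{L}(\Delta,i^\ast B)=\mathcal{L}(\Delta,-\varepsilon)$, so the correct statement is $\mathcal{L}(\Delta,\varepsilon)=\exp(-B)\,\mathcal{L}(\Delta,0)$, again the opposite of what you wrote. Composing the two \emph{corrected} statements still yields $\mathcal{L}_{e^B\wedge\theta_1\wedge\cdots\wedge\theta_k}=\exp(-B)\,\mathcal{L}(\Delta,0)=\mathcal{L}(\Delta,\varepsilon)$, which is the proposition, and is consistent with the convention visible in Example \ref{ExampleS}, where the subspace transforms by $e^{-B}$ while the spinor transforms by $e^{B}\wedge(\cdot)$. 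Your purity check for $\theta_1\wedge\cdots\wedge\theta_k$ and the identification $\mathcal{L}_{\theta_1\wedge\cdots\wedge\theta_k}=\Delta\oplus\Ann(\Delta)=\mathcal{L}(\Delta,0)$ are fine. So: right idea and right conclusion, but as written the proof rests on two false displayed identities whose errors happen to cancel; fix both signs, or simply promote your final direct computation to the main argument.
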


These notions extend to the complex case, that is, when $\mathcal{L}\subset (V\oplus V^\ast)\otimes \mathbb{C}$
 is a maximal isotropic subspace  with respect to the indefinite inner product $\langle\cdot,\cdot \rangle$ extended to $(V\oplus V^\ast)\otimes \mathbb{C}$; see \cite{G1}.

Let us now consider an $n$-dimensional smooth manifold $M$. 
\begin{definition} \label{metric}
The vector bundle $\mathbb{T}M:=TM\oplus T^\ast M$ can be endowed with a natural {\it indefinite inner product} on sections which has signature $(n,n)$ and is given by
\begin{equation}\label{innerproduct}
\langle X+\xi,Y+\eta \rangle =\dfrac{1}{2}(\xi(Y)+\eta(X)).
\end{equation}
\end{definition}

\begin{definition}\cite{H},\cite{G1}
	A {\it generalized almost complex structure} on $M$ is a map $\mathbb{J}: \mathbb{T}M\to\mathbb{T}M$ which defines an isometry of $(\mathbb{T}M,\langle \cdot,\cdot\rangle)$ and satisfies  $\mathbb{J}^2=-1$. It is said to be {\it integrable} if its $+i$-eigenbundle $\mathcal{L}\subset \mathbb{T}M\otimes \mathbb{C}$ is involutive with respect to the Courant bracket
	$$[X+\xi,Y+\eta]=[X,Y]+\mathcal{L}_X\eta -\mathcal{L}_Y\xi-\dfrac{1}{2}d(i_X\eta-i_Y\xi).$$
\end{definition}
It is simple to check that a maximal isotropic subbundle $\mathcal{L}$ of $\mathbb{T}M$ (or $\mathbb{T}M\otimes \mathbb{C}$) is Courant involutive if and only if $\textnormal{N}|\mathcal{L}=0$, where $\textnormal{N}$ denotes the Nijenhuis tensor
\begin{equation}\label{GeneralNij}
N(A,B,C)=\dfrac{1}{3}(\langle[A,B],C \rangle+\langle[B,C],A \rangle+\langle[C,A],B \rangle).
\end{equation}
It is important to notice that a manifold $M$ which admits a generalized almost complex structure is necessarily even dimensional.

Let $\mathbb{J}$ be a generalized almost complex structure on $M$. As $\mathbb{J}$ is an isometry with respect to the indefinite inner product \eqref{innerproduct}, the $+i$-eigenbundle $\mathcal{L}$ is an isotropic subbundle of $\mathbb{T}M\otimes \mathbb{C}$, and as such it can be expressed as the Clifford annihilator of a unique line subbundle $K_\mathcal{L}$ of the complex spinors for the metric bundle $\mathbb{T}M$. The fact that its annihilator is maximal isotropic implies that $K_\mathcal{L}\subset \bigwedge^\bullet T^\ast M\otimes \mathbb{C}$ is by definition a pure spinor line.

The bundle $\bigwedge^\bullet T^\ast M$ of differential forms can in fact be viewed as a spinor bundle for $\mathbb{T}M$, where the Clifford action of an element $X+\xi\in\mathbb{T}M$ on a differential form $\varphi$ is given by \eqref{CliffordAction}. It is simple to check that $(X+\xi)^2\varphi=\langle X+\xi,X+\xi\rangle\varphi$. Therefore, the canonical bundle $K_\mathcal{L}$ may be viewed as a smooth line subbundle of the complex differential forms 
satisfying the relation
$$\mathcal{L}=\{X+\xi\in \mathbb{T}M\otimes \mathbb{C}:\ (X+\xi)\cdot K_\mathcal{L}=0\}.$$
At every point $x\in M$, the line $K_\mathcal{L}$ is generated by a complex differential form $\varphi$ of special algebraic type which is determined by Proposition \ref{Chevalley}. Purity of the spinor $\varphi$ is equivalent to the fact that it has the form
\begin{equation}\label{PureSpinor}
\varphi=\exp(B+i\omega)\, \theta_1\wedge\cdots\wedge \theta_k,
\end{equation}
where  $B$, $\omega$ are the real and imaginary parts of a complex 2-form, and
$(\theta_1,\cdots,\theta_k)$ are linearly independent complex 1-forms spanning $\Ann (\Delta)$ where $\Delta=\pi_1(\mathcal{L})$. Here $\pi_1$ denotes the projection from $\mathbb{T}M\otimes\mathbb{C}$ onto $TM\otimes\mathbb{C}$; see \cite{C}.

The number $k$ is called  the {\it type} of $\mathbb{J}$ and it is not required to be constant along $M$. Points where the type is locally constant are called {\it regular}. If every point in $M$ is regular then $\mathbb{J}$ is called regular.
The type may take values from $k=0$ to $k=n$.

A generalized almost complex structure $\mathbb{J}$ on $M$ determines a decomposition of $\bigwedge^\bullet (T^\ast M\otimes\mathbb{C})$ 
into eigenspaces  of the Lie algebra action of $\mathbb{J}$, that is, the $ik$-eigenbundles of $\mathbb{J}$ acting in the spin representation; see \cite{G1}. These spaces can be concretely described as follows. For a given  $-n\leq k\leq n$, let 
$$U^{n-k}:=\bigwedge\!^k\overline{\mathcal{L}}\cdot K_{\mathcal{L}},$$
where $K_\mathcal{L}$ is the pure spinor line determined by $\mathbb{J}$ and $\cdot$ denotes the Clifford action of $\bigwedge^\bullet\overline{\mathcal{L}}$.

For the following examples we consider a $2$-form  $B$ on $M$,  acting by $B$-transformations whose definition \eqref{Btransform} extends naturally to manifolds. This is said to be a {\it $B$-field transformation} when $B$ is closed as well. 
Since identity \eqref{TypeNChange} implies that a $B$-transformation does not affect the projections to $TM\otimes \mathbb{C}$, 
it preserves type. For more details see \cite{G1}.

The following are some basic types of generalized complex structures, see \cite{G1}.
\begin{example}[Symplectic type $k=0$]\label{ExampleS}
	Given an {\it almost symplectic} manifold $(M,\omega)$ (that is, $\omega$ is a nondegenerate 2-form on $M$, not necessarily closed) we get that
	$$\mathbb{J}_\omega=\left( 
	\begin{array}{cc}
	0 & -\omega^{-1}\\
	\omega & 0
	\end{array}%
	\right),$$
	defines a generalized almost complex structure on $M$. Such structure is integrable if and only if $\omega$ is closed, in which case
	we say this is a generalized complex structure  of  {\it symplectic type}.
	
	We have that $\mathbb{J}_\omega$ determines a maximal isotropic subbundle $\mathcal{L}=\lbrace X-i\omega(X):\ X\in TM\otimes \mathbb{C}\rbrace$ and a pure spinor line generated by $\varphi_\mathcal{L}=\textnormal{exp}(i\omega)$. This generalized almost complex structure has type $k=0$, 
	constant through $M$. Thus, we may transform this example by a $B$-transformation and obtain another generalized almost complex structure of type $k=0$ as follows:
	$$e^{-B}\mathbb{J}_\omega e^B=\left( 
	\begin{array}{cc}
	1 & 0\\
	-B & 1
	\end{array}%
	\right)\left( 
	\begin{array}{cc}
	0 & -\omega^{-1}\\
	\omega & 0
	\end{array}%
	\right)\left( 
	\begin{array}{cc}
	1 & 0\\
	B & 1
	\end{array}%
	\right)=\left( 
	\begin{array}{cc}
	-\omega^{-1}B & -\omega^{-1}\\
	\omega+B\omega^{-1}B& B\omega^{-1}
	\end{array}%
	\right),$$
	$$e^{-B}(\mathcal{L})=\lbrace X-(B+i\omega)(X):\ X\in TM\otimes \mathbb{C}\rbrace,$$
	$$\varphi_{e^{-B}\mathcal{L}}=e^{B+i\omega}.$$
	By \eqref{PureSpinor} we see that any generalized complex structure of type $k=0$ is a $B$-transform of an almost symplectic structure.
	 Recall that if $\omega$ is closed, then $\mathbb{J}_\omega$ is integrable. It is simple to check that the generalized complex structure obtained 
	through $B$ in this way is also integrable if and only if $B$ is closed. We will refer to $e^{-B}\mathbb{J}_ce^B$ as a 
	{\it $B$-symplectic generalized complex structure}.
\end{example}

\begin{example}[Complex type $k=n$]\label{ExampleC}
	If $(M,J)$ is an almost complex manifold, we have that 
	$$\mathbb{J}_c=\left( 
	\begin{array}{cc}
	-J & 0\\
	0 & J^\ast
	\end{array}%
	\right),$$
	defines a generalized almost complex structure on $M$, which  is integrable if and only if $J$ is integrable in the classical sense.
	We refer to such a generalized complex structure as being of  {\it complex type}. This generalized almost complex structure has 
	type $k=n$, constant through $M$.
	
We have that $\mathbb{J}_c$ determines a maximal isotropic subbundle $\mathcal{L}=TM_{0,1}\oplus TM_{1,0}^\ast \subset \mathbb TM\otimes \mathbb C$ where $TM_{1,0}=\overline{TM_{0,1}}$ is the $+i$-eigenspace of $J$ and a pure spinor line generated by $\varphi_L=\Omega^{n,0}$ where $\Omega^{n,0}$ is any generator of the $(n,0)$-forms for the almost complex manifold $(M,J)$. This example may be transformed by a $B$-transformation
	$$e^{-B}\mathbb{J}_ce^B=\left( 
	\begin{array}{cc}
	-J & 0\\
	BJ+J^\ast B& J^\ast
	\end{array}%
	\right),$$
	$$e^{-B}(\mathcal{L})=\lbrace X+\xi-i_XB:\ X+\xi\in TM_{0,1}\oplus TM_{1,0}^\ast\rbrace,$$
	$$\varphi_{e^{-B}\mathcal{L}}=e^B\Omega^{n,0}.$$
	 \end{example}

\begin{remark}
Any generalized complex structure of type $k=n$ 
	is a $B$-transform of a complex structure, see \cite{G1}. 
Moreover, if $\mathbb{J}_c$ is integrable, then $e^{-B}\mathbb{J}_c e^B$ is 
	 integrable if and only if $B$ is closed.
\end{remark}

\begin{definition}\cite{G2}\label{ke}
	A {\it generalized almost K\"ahler structure} on $M$ is a pair $(\mathbb{J},\mathbb{J'})$ of commuting generalized almost complex structures 
	such that $-\mathbb{J}\mathbb{J'}$ is positive definite. 
	If moreover both $\mathbb{J}$ and $\mathbb{J'}$ are integrable we say that the pair $(\mathbb{J},\mathbb{J'})$ is a {\it generalized  K\"ahler structure}. 
	The corresponding  metric  on $\mathbb{T}M$  given 
	by
	$$G(X+\alpha, Y+\beta)=\langle \mathbb{J}(X+\alpha),\mathbb{J'}(Y+\beta)\rangle
	$$
	is called  a   {\it generalized K\"ahler metric}.
\end{definition}

\begin{example}\label{typical}
Assume that $(M,J,\omega)$ is a K\"ahler manifold satisfying the conditions of both Examples \ref{ExampleS} and \ref{ExampleC}.
This means that $J$ is a complex structure on $M$ and $\omega$ is a symplectic form of type $(1,1)$, satisfying $\omega J=-J^\ast \omega$,
which implies that $\mathbb{J}_c\mathbb{J}_\omega=\mathbb{J}_\omega\mathbb{J}_c$. It is simple to check that
$$G=-\mathbb{J}_c\mathbb{J}_\omega=\left(\begin{array}{cc}
0 & g^{-1}\\
g & 0
\end{array}%
\right),$$
defines a generalized K\"ahler  metric on $\mathbb{T} M$, 
therefore  $(\mathbb{J}_c,\mathbb{J}_\omega)$ is a generalized K\"ahler structure on $M$. 
Here $g$ denotes the Riemannian metric on $M$ induced by the formula $g(X,Y)=\omega(X,JY)$.
\end{example}

\begin{remark}\label{SignatureKahler}\cite{G2}
We may regard the generalized K\"ahler metric $G$ of Example \ref{typical} as the standard example.
In fact, given a generalized K\"ahler structure $(\mathbb{J},\mathbb{J'})$ on $M$
with generalized metric $G$ there always exist a Riemannian metric $g$ on $M$ and a $2$-form $b\in \Omega^2(M)$ such that
$$G=e^{b}\left( 
\begin{array}{cc}
 & g^{-1}\\
 g &   
\end{array}%
\right)e^{-b}.$$
Thus, the signature of any generalized K\"ahler metric is $(n,n)$.
\end{remark}

\section{Invariant generalized complex structures}\label{igcs}
We are interested in exploring the properties of the invariant generalized complex structures on maximal flags found by the third author and San Martin in \cite{VS}. With this in mind we set up some terminology.

Let $G$ be a semisimple Lie group with Lie algebra $\mathfrak{g}$ and $\mathfrak{h}\subset\mathfrak{g}$ be a Cartan subalgebra of $\mathfrak{g}$. If $\Pi$ denotes the set of roots of the pair $(\mathfrak{g},\mathfrak{h})$ we can write
$$\mathfrak{g}=\mathfrak{h}\oplus \sum_{\alpha \in \Pi}\mathfrak{g}_\alpha,$$
where $\mathfrak{g}_\alpha=\lbrace X\in\mathfrak{g}:\ [H,X]=\alpha(H)X,\ \forall H\in\mathfrak{h}\rbrace$ denotes the respective complex 1-dimensional root space.

We denote by $\langle \cdot,\cdot \rangle$ the Cartan--Killing form on $\mathfrak{g}$ and fix a Weyl basis of $\mathfrak{g}$ which amounts to taking
 $X_\alpha \in\mathfrak{g}_\alpha$ such that $\langle X_\alpha, X_{-\alpha}\rangle=1$ and $[X_\alpha,X_\beta]=m_{\alpha,\beta}X_{\alpha+\beta}$ with $m_{\alpha,\beta}\in\mathbb{R}$, $m_{-\alpha,-\beta}=-m_{\alpha,\beta}$, and $m_{\alpha,\beta}=0$ if $\alpha+\beta$ is not a root.

Let $\Pi^+$ be a choice of positive roots and $\Sigma$ be the corresponding set of simple roots. The Borel subalgebra corresponding to  $\Pi^+$ is $\mathfrak{b}=\mathfrak{h}\oplus\sum_{\alpha\in\Pi^+ }\mathfrak{g}_\alpha$ and we use the notation
$$\mathfrak{n}^-= \sum_{\alpha\in\Pi^+ }\mathfrak{g}_{-\alpha} \qquad\textnormal{and}\qquad \mathfrak{n}^+= \sum_{\alpha\in\Pi^+ }\mathfrak{g}_\alpha.$$
A subalgebra $\mathfrak{p}$ is parabolic if $\mathfrak{b}\subset \mathfrak{p}$. If $P$ denotes the normalizer of $\mathfrak{p}$ in $G$ then the homogeneous space $\mathbb{F}=G/P$ will be called a {\it flag manifold}.

\begin{definition}
A {\it maximal flag} $\mathbb{F}=G/P$ is a flag manifold such that $\mathfrak{b}=\mathfrak{p}$.
\end{definition}

Every maximal flag manifold can also be seen as the quotient $\mathbb{F}=U/T$ where $U$ is a compact real form in $G$ and $T$ is a maximal torus in $U$.

\begin{example}\label{CaseSln} 
When $\mathfrak{g}=\mathfrak{sl}(n,\mathbb{C})$ we have the following data:
\begin{enumerate}
	\item[$\iota$.] The Cartan--Killing form $\langle X,Y \rangle$ is equal to $2n$ times the form $\textnormal{tr}(XY)$.
	\item[$\iota\iota$.] A canonical choice for a Cartan subalgebra $\mathfrak{h}$ is the set of diagonal matrices.
	
	\item[$\iota\iota\iota$.] With this choice of $\mathfrak{h}$, the roots are the linear functionals 
	$\alpha_{ij}(\textnormal{diag}(\lambda_1,\cdots,\lambda_{n}))=\lambda_i-\lambda_j$ for $i\neq j$.
It is simple to check that $\mathfrak{g}_{\alpha_{ij}}$ are the subspaces generated by the elements given by the matrices $E_{ij}$ (with $1$ in the $i,j$ entry and zeros elsewhere).
	
	\item[$\iota \nu$.] The compact real form of $\mathfrak{sl}(n,\mathbb{C})$ is $\mathfrak{su}(n)$, the real Lie algebra of anti-Hermitian matrices.
	
	\item[$\nu$.] $H\in\mathfrak{h}$ is regular if and only if its eigenvalues are all distinct.
	
	\item[$\nu\iota$.] {\it Flag manifolds:} Let $\mathbb{F}(n_1,\cdots,n_s)$ be the set of all increasing sequences 
	$\lbrace V_1\subset \cdots \subset V_s \rbrace$
	of complex vector subspaces of $\mathbb{C}^{n}$ with $\dim V_j=n_j$ and 
	$n_1<n_2<\cdots<n_s=n$. The set $\mathbb{F}(n_1,\cdots,n_s)$ has the structure of a homogeneous space
	$$\mathbb{F}(n_1,\cdots,n_s)\approx \dfrac{\textnormal{SU}(n)}{S(\textnormal{U}(d_1)\times \cdots \times \textnormal{U}(d_s))}\approx \textnormal{Ad}(\textnormal{SU}(n))(H),$$
	where $d_i=n_i-n_{i-1}$ with $n_0=0$, the element $H=\textnormal{diag}(i\lambda_1I_{d_1},\cdots,i\lambda_sI_{d_s}) \in \mathfrak{t}$ with $\lambda_i\neq \lambda_j$ for $i\neq j$ and $d_1\lambda_1+\cdots+d_s\lambda_s=0$ . 
	
	All flag manifolds of $\textnormal{SU}(n)$ are given by such quotients, and are determined by the choices
	of  integers $n_i$. These are also all the flag manifolds of $\textnormal{SL}(n)$.
	
	In the particular case when $n_i=i$ for all $1\leq i \leq n-1$, we then have that 
	$T= S(\textnormal{U}(1)\times \cdots \times \textnormal{U}(1))$ ($n$ times) is a maximal torus and $\textnormal{Lie}(T)=\mathfrak{t}$.
	Then the quotient  has structure of homogeneous space
	$$\mathbb{F}(1, 2, \cdots,n-1)\approx \dfrac{\textnormal{SU}(n)}{S(\textnormal{U}(1)\times
		\cdots \times \textnormal{U}(1))}\approx \textnormal{Ad}(\textnormal{SU}(n))(H),$$
	where $H=\textnormal{diag}(i\lambda_1,\cdots,i\lambda_{n}) \in \mathfrak{t}$ with $\lambda_i\neq \lambda_j$ for $i\neq j$ and $\lambda_1+\cdots+\lambda_{n}=0$. 
	The flags $\mathbb{F}(1, 2, \cdots,n-1)$ are called {\it maximal flags} or {\it full flags} and in this work we concerned ourselves only with those.

\end{enumerate}

\end{example}

From now on, we will consider only maximal flag manifolds. Accordingly, using the Cartan--Killing form we may identify the tangent and   cotangent spaces of $\mathbb{F}$ at the origin $b_0$ with  $T_{b_0}\mathbb{F}=\mathfrak{n}^-$ and  with $T_{b_0}^\ast\mathbb{F}=\mathfrak{n}^+$, respectively.

Since the Cartan--Killing form restricted to $\mathfrak{h}$ is nondegenerate, for every root $\alpha\in \mathfrak{h}^\ast$ 
 there exists a unique $H_\alpha\in\mathfrak{h}$ such that $\alpha=\langle H_\alpha,\cdot\rangle$. The real vector space generated by the $H_\alpha's$ with $\alpha\in \mathfrak{h}^\ast$ will be denoted by $\mathfrak{h}_\mathbb{R}$. Let $\mathfrak{u}$ be the compact real form of $\mathfrak{g}$ which is given by
$$\mathfrak{u}=\mathbb{R}\lbrace i\mathfrak{h}_\mathbb{R},A_\alpha,S_\alpha:\ \alpha\in\Pi^+ \rbrace,$$
where $A_\alpha=X_\alpha-X_{-\alpha}$ and $S_\alpha=i(X_\alpha+X_{-\alpha})$. If $U=\textnormal{exp}(\mathfrak{u})$ is the corresponding compact real form in $G$, then we may write $\mathbb{F}=U/T$ where $T=U\cap P$ is a maximal torus in $U$. 
Alternatively,  $\mathbb{F}=\textnormal{Ad}(U)(H)$ the adjoint orbit of a regular element $H$ of the Lie algebra $\mathfrak{t}$ of $T$ which remains fixed throughout the paper. In the latter case we are identifying $H$ with the origin $b_0$. Under this identification $T_{b_0}\mathbb{F}=\sum_{\alpha\in\Pi^+ }\mathfrak{u}_\alpha$ where $\mathfrak{u}_\alpha=(\mathfrak{g}_{-\alpha}\oplus \mathfrak{g}_{\alpha})\cap\mathfrak{u}$.

It is also simple to see that the adjoint representation of $U$ induces a natural action on $\mathbb{F}$. The authors in \cite{VS} determined those generalized almost complex structures on $\mathbb{F}$ which are $U$-invariant. Because 
 $U$-invariant geometric structures on $\mathbb{F}$ are completely characterized by their value at the origin $b_0$ of $\mathbb{F}$, it was sufficient to find all maps $\mathbb{J}:\mathfrak{n}^-\oplus(\mathfrak{n}^-)^\ast\to \mathfrak{n}^-\oplus(\mathfrak{n}^-)^\ast$ that are isometries with respect to $\langle\cdot,\cdot\rangle$, satisfy $\mathbb{J}^2=-1$, and commute with  the adjoint action of $T$ on $\mathfrak{n}^-\oplus(\mathfrak{n}^-)^\ast$. Note that in this case the indefinite inner product given in \eqref{innerproduct} is just the Cartan--Killing form.

Aiming to describe the form of these invariant generalized almost complex structures on $\mathbb{F}$, we identify $\mathfrak{u}\cong \mathfrak{u}^\ast$ and $\mathfrak{u}_\alpha\cong \mathfrak{u}_\alpha^\ast$ by means of the Kirillov--Kostant--Souriau (KKS) symplectic form on $\textnormal{Ad}(U)(H)$ which at $b_0$ is given by 
$$\omega_{b_0}(\widetilde{X},\widetilde{Y})=\langle H,[X,Y]\rangle,$$
for all $X,Y\in\mathfrak{u}$ and where $\widetilde{X}=\textnormal{ad}(X)$ denotes the fundamental vector field associated to $X$ with respect to the adjoint action. The elements of $\mathfrak{u}_\alpha^\ast$ will be denoted by $A_\alpha^\ast$ and $S_\alpha^\ast$ where
$$X^\ast=\dfrac{1}{2i\langle H,H_\alpha\rangle}\omega(\widetilde{X},\cdot).$$ 

With this notation $\lbrace -S_\alpha^\ast, A_\alpha^\ast\rbrace$ is a dual basis for $\lbrace A_\alpha, S_\alpha\rbrace$.
As a first consequence of these identifications we get that for triples of roots $\alpha,\beta,\gamma\in\Pi^+$, the Nijenhuis tensor \eqref{GeneralNij} can be rewritten as
\begin{equation}\label{ParticularNij}
N(A,B,C)=\dfrac{1}{12}(k_\gamma\langle H,[C_2,[A_1,B_1]] \rangle+k_\alpha\langle H,[A_2,[B_1,C_1]] \rangle+k_\beta\langle H,[B_2,[C_1,A_1]] \rangle),
\end{equation}
where $A=A_1+A_2^\ast\in \mathfrak{u}_\alpha\oplus \mathfrak{u}_\alpha^\ast$, $B=B_1+B_2^\ast\in\mathfrak{u}_\beta\oplus\mathfrak{u}_\beta^\ast$, $C=C_1+C_2^\ast\in\mathfrak{u}_\gamma\oplus \mathfrak{u}_\gamma^\ast$, and $k_\alpha=\dfrac{1}{2i\langle H,H_\alpha\rangle}$ for all $\alpha\in \Pi^+$.

\begin{remark}\label{canonical}\cite{VS} Invariant generalized almost complex structures $\mathbb{J}$ on $\mathbb{F}$ are those whose restriction to $\mathfrak{u}_\alpha\oplus\mathfrak{u}_\alpha^\ast$ have the form either
$$\pm \mathcal{J}_0=\pm \left( 
\begin{array}{cccc}
0 & -1 & 0 & 0\\ 
1 & 0 & 0 & 0\\
0 & 0 & 0 & -1\\
0 & 0 & 1 & 0
\end{array}%
\right):=\pm\left( 
\begin{array}{cc}
-J_0 & 0\\
0 & J^\ast_0
\end{array}%
\right)\qquad \textbf{complex type},$$
or else
$$\mathcal{J}_\alpha=\left( 
\begin{array}{cccc}
a_\alpha & 0 & 0 & -x_\alpha\\ 
0 &  a_\alpha& x_\alpha & 0\\
0 & -y_\alpha & -a_\alpha & 0\\
y_\alpha & 0 & 0 & -a_\alpha
\end{array}%
\right):=\left( 
\begin{array}{cc}
\mathcal{A}_{\alpha} & \mathcal{X}_{\alpha} \\
\mathcal{Y}_{\alpha}  & -\mathcal{A}_{\alpha}
\end{array}%
\right)\qquad \textbf{noncomplex type},$$
with $a_\alpha,x_\alpha,y_\alpha\in\mathbb{R}$ such that $a_\alpha^2=x_\alpha y_\alpha-1$.
\end{remark}

Thus, 
we obtain a concrete description of  all  invariant generalized almost complex structures on $\mathbb F$ as follows.
 Consider the set $\mathcal M_a$ of all invariant generalized almost complex structures on the maximal  flag $\mathbb F$. For a fixed root $\alpha \in \Pi^+$,
let $\mathcal M_\alpha$ be the  restriction of $\mathcal M_a$ to the subspace $\mathfrak u_\alpha\oplus \mathfrak u_\alpha^*$. 

\begin{lemma} \label{Aalpha}
  $\mathcal M_\alpha$ is a disjoint union of:
\begin{enumerate}
\item[$\iota$.] a 2-dimensional family of structures of noncomplex type, parametrized by the real algebraic surface
cut out by $a_\alpha^2-x_\alpha y_\alpha =-1$ in $\mathbb R^3$, and 
\item[$\iota\iota$.]  2 extra points, corresponding to the complex structures $\pm \mathcal{J}_0$. 
\end{enumerate}
\end{lemma}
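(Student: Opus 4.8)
The plan is to note that Lemma \ref{Aalpha} is essentially a restatement of Remark \ref{canonical} from \cite{VS}, so the proof amounts to unpacking what that remark says and checking that the two listed families are genuinely disjoint. First I would observe that the set $\mathcal M_\alpha$ consists exactly of those linear maps $\mathbb J\colon \mathfrak u_\alpha\oplus\mathfrak u_\alpha^\ast\to\mathfrak u_\alpha\oplus\mathfrak u_\alpha^\ast$ which are isometries of the restricted indefinite inner product, satisfy $\mathbb J^2=-1$, and commute with the adjoint $T$-action; by Remark \ref{canonical} every such $\mathbb J$ is either $\pm\mathcal J_0$ or equals $\mathcal J_\alpha$ for some $(a_\alpha,x_\alpha,y_\alpha)\in\mathbb R^3$ with $a_\alpha^2=x_\alpha y_\alpha-1$. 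Conversely, a direct check shows that each $\mathcal J_\alpha$ with $a_\alpha^2 = x_\alpha y_\alpha - 1$ does satisfy $\mathcal J_\alpha^2 = -1$ (the block computation gives $\mathcal A_\alpha^2 + \mathcal X_\alpha\mathcal Y_\alpha = (a_\alpha^2 - x_\alpha y_\alpha)\,\mathrm{Id} = -\mathrm{Id}$, and the off-diagonal blocks vanish), is an isometry, and commutes with the $T$-action, so these are precisely the noncomplex-type members of $\mathcal M_\alpha$, and $\pm\mathcal J_0$ are precisely the complex-type members.

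Next I would identify the parameter space of the noncomplex family with the quadric surface $Q=\{(a,x,y)\in\mathbb R^3: a^2 - xy = -1\}$: the assignment $(a_\alpha,x_\alpha,y_\alpha)\mapsto \mathcal J_\alpha$ is injective because the entries $a_\alpha,x_\alpha,y_\alpha$ can be read off directly from the matrix, so the noncomplex part of $\mathcal M_\alpha$ is in bijection with $Q$. That $Q$ is a genuine $2$-dimensional real algebraic surface (smooth, since the gradient $(2a,-y,-x)$ never vanishes on $Q$ — it would force $a=x=y=0$, contradicting $a^2-xy=-1$) is immediate. This gives part $\iota$.

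For part $\iota\iota$ and the disjointness claim, I would simply remark that $\pm\mathcal J_0$ are not of the form $\mathcal J_\alpha$: the structure $\mathcal J_0$ has vanishing diagonal blocks $\mathcal A$ but its off-diagonal blocks are $\pm J_0$, whereas any $\mathcal J_\alpha$ has $\mathcal X_\alpha = \mathrm{diag}(-x_\alpha, x_\alpha)$ type blocks that are never of the form of $J_0$ (which mixes the two coordinates), and moreover for $\mathcal J_\alpha$ one would need $x_\alpha = y_\alpha$ with a sign pattern incompatible with $\mathcal J_0$; in any case $x_\alpha y_\alpha = 1 + a_\alpha^2 \ge 1 > 0$ forces $x_\alpha\ne 0$, so the first column of $\mathcal J_\alpha$ is never the first column of $\pm\mathcal J_0$. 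Hence the complex-type points are disjoint from the noncomplex family, and $\mathcal M_\alpha$ is the disjoint union asserted. I do not anticipate a real obstacle here — the only mild subtlety is making the identification of the noncomplex family with the quadric precise and confirming it is exactly the $2$-dimensional surface rather than something lower-dimensional, which the injectivity of $(a_\alpha,x_\alpha,y_\alpha)\mapsto\mathcal J_\alpha$ settles.
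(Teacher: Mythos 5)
Your proposal is correct and follows essentially the same route as the paper: Lemma \ref{Aalpha} is recorded there as a direct consequence of the classification in Remark \ref{canonical} (quoted from \cite{VS}), and your unpacking of that remark, the block verification that $\mathcal{A}_\alpha^2+\mathcal{X}_\alpha\mathcal{Y}_\alpha=(a_\alpha^2-x_\alpha y_\alpha)\,\mathrm{Id}=-\mathrm{Id}$, the injectivity of $(a_\alpha,x_\alpha,y_\alpha)\mapsto\mathcal{J}_\alpha$, and the disjointness via $x_\alpha y_\alpha=1+a_\alpha^2>0$ match the paper's implicit argument (compare Remark \ref{topoS}, which makes the same disjointness observation through the vanishing entries of $\pm\mathcal{J}_0$).
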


\begin{remark}\label{topoS}
Note that while  points appearing in item $\iota.$  satisfy  $a_\alpha^2-x_\alpha y_\alpha =-1$,  points appearing
in item $\iota\iota.$ correspond to matrices that have entries 11, 14, and 41 all zero, so that the same polynomial expression 
would give  $a_\alpha^2-x_\alpha y_\alpha =0$. Thus, the 2 extra points do not belong to the closure of 
the given surface. 
Alternatively, we can also see that, 
if $\mathcal M_\alpha$ is endowed with the induced topology from 4x4 matrices (or $\mathbb{R}^{16}$), 
it is not possible to construct a sequence of points  appearing in item $\iota.$ which converges 
to a  point from item $\iota\iota.$
 Hence,
 these 2 extra points are isolated points  with either the natural topology of $\mathcal M_\alpha$ 
 induced from 4x4 matrices, or else from the topology induced by the embedding in $\mathbb R^3$.
\end{remark}

\begin{notation} For each $\alpha\in \Pi^+$ we will denote by $\mathbb{J}_\alpha$ the restriction of an invariant generalized almost complex structure $\mathbb{J}$ to $\mathfrak{u}_\alpha\oplus\mathfrak{u}_\alpha^\ast$.
\end{notation}

The maximal invariant isotropic $+i$-eigenspace $\mathcal{L}$ of $(\mathfrak{n}^-\oplus (\mathfrak{n}^-)^\ast)\otimes \mathbb{C}$ associated to an invariant generalized almost complex structure $\mathbb{J}$ can be written as a sum $\mathcal{L}=\bigoplus_{\alpha\in\Pi^+}\mathcal{L}_\alpha$, where
\begin{equation}\label{isotropic1}
\mathcal{L}_\alpha=\left\lbrace \begin{array}{lcc} 
L_0^{\pm} = \mathbb{C} \{ A_\alpha \mp iS_\alpha , A^\ast _\alpha \mp S^\ast _\alpha \} & \textnormal{if} & \mathbb{J}_\alpha=\pm \mathcal{J}_0\\
L_\alpha = \mathbb{C} \{ x_\alpha A_\alpha + (a_\alpha - i)A^\ast _\alpha , x_\alpha S_\alpha + (a_\alpha - i)S^\ast _\alpha \} & \textnormal{if} & \mathbb{J}_\alpha= \mathcal{J}_\alpha.
\end{array}
\right.
\end{equation}
Recall that a generalized 
almost complex structure $\mathbb{J}$ is integrable when the Nijenhuis tensor restricted to the $+i$-eigenbundle of $\mathbb{J}$ vanishes. 
Integrability 
depends just on  what happens with triples of the form $(\mathbb{J}_\alpha,\mathbb{J}_\beta,\mathbb{J}_{\alpha+\beta})$
associated to positive roots $(\alpha,\beta,\alpha+\beta)$. This is so, because for the case of triples of the form $(\alpha,\beta,\gamma)$ such that $\gamma\neq \alpha+\beta$ the Nijenhuis tensor vanish automatically. Accordingly, we obtain that $\mathbb{J}$
 is integrable if and only  if for all positive roots  which triple $(\mathbb{J}_\alpha,\mathbb{J}_\beta,\mathbb{J}_{\alpha+\beta})$ 
 corresponds to one of the rows appearing in Table \ref{integrab}
\begin{table}
	\begin{tabular}{c|rrr}
		$\pm$ & $\mathbb{J}_\alpha$ & $\mathbb{J}_\beta$ & $\mathbb{J}_{\alpha + \beta}$ \\
		\hline
		 & $\mathcal{J}_0$ & $ \mathcal{J}_0$ & $ \mathcal{J}_0$\\
		 & $  \mathcal{J}_0$ & $ - \mathcal{J}_0$ & $  \mathcal{J}_0$\\
		& $  \mathcal{J}_0$ & $ - \mathcal{J}_0$ & $  -  \mathcal{J}_0$\\
		 & $ \mathcal{J}_\alpha$ & $ \mathcal{J}_0 $ & $  \mathcal{J}_0$ \\
		 & $  \mathcal{J}_0$ & $\mathcal{J}_\beta $ & $  \mathcal{J}_0$ \\
		 & $ \mathcal{J}_0$ & $ - \mathcal{J}_0$ & $\mathcal{J}_{\alpha+\beta} $\\
		 & $\mathcal{J}_\alpha$ & $\mathcal{J}_\beta$ & $\mathcal{J}_{\alpha+\beta} $
	\end{tabular}
\caption{Integrability conditions}\label{integrab}
\end{table}
where,  for the  last row, additional  conditions (\ref{Integrabilityconditions}) are required:
\begin{equation}\label{Integrabilityconditions}
\left\lbrace \begin{array}{cc}
a_{\alpha+\beta} x_\alpha x_\beta - a_\beta x_\alpha x_{\alpha+\beta} - a_\alpha x_\beta x_{\alpha+\beta} = 0 \\
x_\alpha x_\beta - x_\alpha x_{\alpha+\beta} - x_\beta x_{\alpha+\beta} = 0.
\end{array}\right.
\end{equation}
The $\pm$ sign on top of Table \ref{integrab} means that each row may be taken either with plus or with minus signs, for instance both triples
$(\mathcal{J}_0, \mathcal{J}_0, \mathcal{J}_0)$ and $(-\mathcal{J}_0,- \mathcal{J}_0, -\mathcal{J}_0)$ are integrable. 
%
%

These statements were obtained by 
direct computation of the 
Nijenhuis tensor $N|_{\mathcal{L}_{\alpha\beta}}$ where $\mathcal{L}_{\alpha\beta}=\mathcal{L}_{\alpha}\cup\mathcal{L}_{\beta}\cup \mathcal{L}_{\alpha+\beta}$,
taking into account  that for each $\alpha\in\Pi^+$ the space  $\mathcal{L}_\alpha$ can be either $L_0^{\pm}$ or $L_\alpha$. For more details see \cite{VS}.

If $\mathbb{J}$ is an invariant generalized complex structure, the set
$\Pi^+_J$ composed by the elements $\alpha\in \Pi$ such that $\mathbb{J}_\alpha$ is of complex type with $\mathbb{J}_\alpha=\mathcal{J}_0$ together with the elements $\alpha\in \Pi$ such that $\mathbb{J}_\alpha$ is of noncomplex type with $x_\alpha>0$, is a choice of positive roots with respect to some lexicographic order in $\mathfrak{h}^\ast_\mathbb{R}$. We will denote by $\Sigma_J$ the simple root system of $\mathfrak{g}$ associated to $\Pi^+_J$. 

We also denote by $\langle \Theta \rangle$  the smallest  subset of $\Sigma$ containing $\Theta$ which is closed 
for addition, thus,  if $\alpha,\beta \in \langle \Theta\rangle$ and $\alpha+\beta$ is a root, then 
$\alpha+\beta \in \langle \Theta \rangle$. Furthermore, we denote $\langle \Theta \rangle^+:= \langle \Theta \rangle\cap\Pi^+$. 

We will use the following results:

\begin{theorem}\cite{VS}\label{theta}
If $\mathbb{J}$ is an invariant generalized complex structure on $\mathbb{F}$, then there exists a subset $\Theta \subseteq \Sigma_J$,  such that $\mathbb{J}_\alpha$ is of noncomplex type for each $\alpha \in \langle \Theta \rangle^+$ and of complex type for each $\alpha \in \Pi^+ \backslash \langle \Theta \rangle^+$.
\end{theorem}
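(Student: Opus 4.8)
The plan is to let $\Theta$ be the set of those \emph{simple} roots of $\Sigma_J$ on which $\mathbb{J}$ is of noncomplex type,
$$\Theta \ce \{\gamma \in \Sigma_J : \mathbb{J}_\gamma \textnormal{ is of noncomplex type}\},$$
and to prove that $\langle\Theta\rangle^+$ is exactly the set of positive roots on which $\mathbb{J}$ is of noncomplex type. Throughout I would work in the Weyl basis $\{A_\alpha,S_\alpha\}_{\alpha\in\Pi^+_J}$ adapted to the positive system $\Pi^+_J$ rather than to $\Pi^+$; this is legitimate because all of the constructions above (the compact real form $\mathfrak{u}$, the KKS form, and the Nijenhuis formula \eqref{ParticularNij}) depend only on the choice of \emph{some} positive system, and in this adapted basis two normalizations hold by the very definition of $\Pi^+_J$: a positive root $\alpha$ on which $\mathbb{J}$ is of complex type has $\mathbb{J}_\alpha=+\mathcal{J}_0$ (never $-\mathcal{J}_0$), and a positive root on which $\mathbb{J}$ is of noncomplex type has $x_\alpha>0$. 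In particular $\Sigma_J$ now plays the role of the simple system and Table \ref{integrab} governs integrability for the additive triples of $\Pi^+_J$. With this set-up I would argue by induction on the height $\mathrm{ht}(\alpha)$ of a positive root.

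\emph{If $\alpha\in\langle\Theta\rangle^+$ then $\mathbb{J}_\alpha$ is of noncomplex type.} When $\mathrm{ht}(\alpha)=1$ this is the definition of $\Theta$. When the height is at least $2$, since $\langle\Theta\rangle$ is the set of positive roots of the root subsystem generated by $\Theta$ (for which $\Theta$ is a system of simple roots), one may write $\alpha=\gamma+\alpha'$ with $\gamma\in\Theta$ and $\alpha'\in\langle\Theta\rangle^+$ of strictly smaller height. By the inductive hypothesis $\mathbb{J}_\gamma$ and $\mathbb{J}_{\alpha'}$ are both of noncomplex type, and the only row of Table \ref{integrab} (in either sign) whose first two entries are simultaneously of noncomplex type is the last one, $(\mathcal{J}_\alpha,\mathcal{J}_\beta,\mathcal{J}_{\alpha+\beta})$; hence integrability of $\mathbb{J}$ forces $\mathbb{J}_\alpha$ to be of noncomplex type as well.

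\emph{If $\mathbb{J}_\alpha$ is of noncomplex type then $\alpha\in\langle\Theta\rangle$.} Again induct on the height. If $\alpha$ is simple then $\alpha\in\Theta\subseteq\langle\Theta\rangle$. If $\mathrm{ht}(\alpha)\ge 2$, write $\alpha=\gamma+\alpha'$ with $\gamma\in\Sigma_J$ simple and $\alpha'\in\Pi^+_J$. The triple $(\mathbb{J}_\gamma,\mathbb{J}_{\alpha'},\mathbb{J}_\alpha)$ is one of the rows of Table \ref{integrab}; since $\mathbb{J}_\alpha$ is of noncomplex type, up to the overall sign ambiguity of the table it must be either the last row (all three of noncomplex type) or the sixth row $(\mathcal{J}_0,-\mathcal{J}_0,\mathcal{J}_{\alpha+\beta})$. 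The sixth row is ruled out by the normalization: in either of its sign variants one of $\mathbb{J}_\gamma,\mathbb{J}_{\alpha'}$ would equal $-\mathcal{J}_0$, which is impossible for a complex-type \emph{positive} root of $\Pi^+_J$. Therefore the triple is the last row, so $\mathbb{J}_\gamma$ and $\mathbb{J}_{\alpha'}$ are of noncomplex type; then $\gamma\in\Theta$, and by the inductive hypothesis $\alpha'\in\langle\Theta\rangle$, so $\alpha=\gamma+\alpha'\in\langle\Theta\rangle$ by closure under addition. Intersecting with $\Pi^+_J$ identifies $\langle\Theta\rangle^+$ with the set of positive roots of noncomplex type, which, together with the first implication, gives the theorem.

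I expect the real work to be in the first paragraph. One must verify carefully that passing from $\Pi^+$ to $\Pi^+_J$ (equivalently, that the Weyl-group action of Section \ref{S:4} carrying one positive system to the other) leaves the integrability table of \cite{VS} intact, since it is precisely the defining inequality $x_\alpha>0$ and the sign normalization $\mathbb{J}_\alpha=+\mathcal{J}_0$ built into $\Pi^+_J$ that exclude the ``mixed'' integrable triple in the sixth row; and it is exactly this exclusion that makes the noncomplex locus closed under passing to summands, hence equal to the positive part of a root subsystem generated by simple roots. Once that point is settled, the two inductions on height are routine consequences of Table \ref{integrab}.
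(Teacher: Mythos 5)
The paper does not actually prove Theorem \ref{theta}: it is imported from \cite{VS} as a citation, so there is no in-paper argument to compare yours against. Judged on its own terms, your reconstruction is essentially correct and uses exactly the ingredients the surrounding text supplies: the normalization of Remark \ref{x>0} (after passing to the positive system $\Pi^+_J$, every complex-type positive root carries $+\mathcal{J}_0$ and every noncomplex one has $x_\alpha>0$), the fact that this passage is licensed by Proposition \ref{WeylIntegrability} so that Table \ref{integrab} governs additive triples of $\Pi^+_J$, and the two observations about the table that drive the inductions: the only row whose first two entries are both of noncomplex type is the last one, and the only mixed row with a noncomplex third entry, $(\mathcal{J}_0,-\mathcal{J}_0,\mathcal{J}_{\alpha+\beta})$, is excluded in both of its sign variants because each puts a $-\mathcal{J}_0$ on a positive root of $\Pi^+_J$. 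The height inductions then go through using the standard facts that a positive root of height at least two is a simple root plus a positive root, and likewise inside the subsystem $\langle\Theta\rangle$ with simple system $\Theta$.

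One loose end to tie: your final sentence intersects with $\Pi^+_J$, whereas the theorem asserts the dichotomy on $\Pi^+$ via $\langle\Theta\rangle^+=\langle\Theta\rangle\cap\Pi^+$, and these two positive systems generally differ. The fix is one line and you should state it: the type of $\mathbb{J}_\alpha$ is unchanged under $\alpha\mapsto-\alpha$ (if $\mathbb{J}_\alpha=\pm\mathcal{J}_0$ then $\mathbb{J}_{-\alpha}=\mp\mathcal{J}_0$, still of complex type, and noncomplex goes to noncomplex), and $\langle\Theta\rangle$, read as the root subsystem generated by $\Theta$, is symmetric under negation; hence the identity ``noncomplex locus $=\langle\Theta\rangle$'' established inside $\Pi^+_J$ extends to all of $\Pi$ and then restricts correctly to $\Pi^+$. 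With that sentence added, the argument is complete.
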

Conversely,
\begin{theorem}\cite{VS}\label{theta2}
Let $\Sigma$ be a simple root system for $\mathfrak g$ and consider $\Theta\subseteq \Sigma$. Then there exists an invariant generalized complex structure $\mathbb{J}$ on $\mathbb{F}$ such that $\mathbb{J}_\alpha$ is of noncomplex type for each $\alpha \in \langle \Theta \rangle^+$ and of complex type for each $\alpha \in \Pi^+ \backslash \langle \Theta \rangle^+$.
\end{theorem}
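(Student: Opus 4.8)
The plan is to prove the statement by an explicit construction: prescribe $\mathbb J$ root space by root space through Remark \ref{canonical}, arrange the types to be exactly as demanded, and then verify integrability by checking that every triple of positive roots $(\alpha,\beta,\alpha+\beta)$ lands in one of the rows of Table \ref{integrab}. The whole argument rests on two elementary combinatorial facts, which I would record first. Writing each positive root in the simple basis $\Sigma$ with nonnegative integer coefficients, $\langle\Theta\rangle^+$ is precisely the set of positive roots whose coefficients on $\Sigma\setminus\Theta$ all vanish. Hence: (a) $\langle\Theta\rangle^+$ is closed under root addition (this is built into its definition); and (b) the complement $\mathcal C:=\Pi^+\setminus\langle\Theta\rangle^+$ is \emph{absorbing}, i.e.\ if $\alpha\in\mathcal C$, $\beta\in\Pi^+$ and $\alpha+\beta\in\Pi^+$ then $\alpha+\beta\in\mathcal C$, since a positive coefficient of $\alpha$ on some simple root outside $\Theta$ survives in $\alpha+\beta$. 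Equivalently, if a positive root $\beta+\gamma$ lies in $\langle\Theta\rangle^+$, then both $\beta,\gamma\in\langle\Theta\rangle^+$.

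Next I would define the structure. For $\alpha\in\langle\Theta\rangle^+$ write $\alpha=\sum_{\delta\in\Theta}c_\delta\delta$ and let $h(\alpha)=\sum_{\delta\in\Theta}c_\delta\ge 1$ be the $\Theta$-height; note $h(\alpha+\beta)=h(\alpha)+h(\beta)$ whenever all three roots lie in $\langle\Theta\rangle^+$. Put
\[
\mathbb J_\alpha=\mathcal J_\alpha\ \text{ with }\ a_\alpha=0,\ x_\alpha=\tfrac{1}{h(\alpha)},\ y_\alpha=h(\alpha)\quad(\alpha\in\langle\Theta\rangle^+),\qquad \mathbb J_\alpha=\mathcal J_0\quad(\alpha\in\mathcal C).
\]
Since $a_\alpha^2=0=x_\alpha y_\alpha-1$, Remark \ref{canonical} guarantees that this is a genuine invariant generalized almost complex structure on $\mathbb F$, and by construction $\mathbb J_\alpha$ is of noncomplex type exactly for $\alpha\in\langle\Theta\rangle^+$ and of complex type for $\alpha\in\Pi^+\setminus\langle\Theta\rangle^+$. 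So it only remains to establish integrability.

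For integrability I would invoke the criterion around Table \ref{integrab}: $\mathbb J$ is integrable iff for each triple of positive roots $(\alpha,\beta,\alpha+\beta)$ the triple $(\mathbb J_\alpha,\mathbb J_\beta,\mathbb J_{\alpha+\beta})$ appears in the table (with \eqref{Integrabilityconditions} in the last row). Using facts (a) and (b) there are exactly four cases. If $\alpha,\beta\in\mathcal C$, then $\alpha+\beta\in\mathcal C$ and the triple is $(\mathcal J_0,\mathcal J_0,\mathcal J_0)$ (row $1$). If exactly one of $\alpha,\beta$ lies in $\langle\Theta\rangle^+$, say $\alpha$, then $\beta\in\mathcal C$ forces $\alpha+\beta\in\mathcal C$, giving $(\mathcal J_\alpha,\mathcal J_0,\mathcal J_0)$ (row $4$); symmetrically one gets $(\mathcal J_0,\mathcal J_\beta,\mathcal J_0)$ (row $5$). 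Finally, if $\alpha,\beta\in\langle\Theta\rangle^+$, then $\alpha+\beta\in\langle\Theta\rangle^+$ and the triple is $(\mathcal J_\alpha,\mathcal J_\beta,\mathcal J_{\alpha+\beta})$, the last row, so \eqref{Integrabilityconditions} must be checked: $a_\alpha=a_\beta=a_{\alpha+\beta}=0$ makes the first equation trivial, and clearing denominators in the second turns it into $h(\alpha+\beta)-h(\alpha)-h(\beta)=0$, which is the additivity of the height. Thus every triple is admissible and $\mathbb J$ is integrable, proving the theorem.

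\textbf{Main obstacle.} The only delicate point is the bookkeeping of Step~1. One must know that $\mathcal C$ is absorbing so that the ``mixed'' triples fall on rows $4$ and $5$, and in particular that a triple with two complex-type entries and one noncomplex entry — a configuration absent from Table \ref{integrab} — can never occur; both follow at once from the coefficient description of $\langle\Theta\rangle^+$. The analytic input, namely choosing $x_\alpha,y_\alpha$ so that \eqref{Integrabilityconditions} holds, then reduces to the additivity of a height function, so there is no real computation. (More generally one could take $a_\alpha=s_\alpha/h(\alpha)$ for any additive assignment $s$, yielding the higher-dimensional families of the introduction, but for existence the choice $a_\alpha\equiv 0$ is cleanest.)
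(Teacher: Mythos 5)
Your construction is correct. The paper itself offers no proof of this statement -- it is imported verbatim from the reference [VS] -- so there is no internal argument to compare against; judged on its own terms, your argument is a complete and self-contained existence proof. The two combinatorial facts are right: $\langle\Theta\rangle^+$ is closed under addition by definition, and the complement $\mathcal C=\Pi^+\setminus\langle\Theta\rangle^+$ is absorbing because a positive coefficient of $\alpha$ on a simple root outside $\Theta$ persists in $\alpha+\beta$; together these rule out every triple configuration absent from Table \ref{integrab}, in particular any triple with exactly one noncomplex entry in the third slot or two noncomplex entries. The choice $a_\alpha=0$, $x_\alpha=1/h(\alpha)$, $y_\alpha=h(\alpha)$ satisfies $a_\alpha^2=x_\alpha y_\alpha-1$, keeps $x_\alpha>0$ (consistent with Remark \ref{x>0}), kills the first equation of \eqref{Integrabilityconditions}, and reduces the second to additivity of the $\Theta$-height, as you say. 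This is essentially the same mechanism that underlies the classification in [VS] (and the dimension count in Lemma \ref{ThetaCells}, where the second equation of \eqref{Integrabilityconditions} is likewise used to show the $x_\alpha$ for $\alpha\in\langle\Theta\rangle^+$ are determined by their values on $\Theta$); your specific height-function solution is simply one explicit point of that $|\Theta|$-dimensional family, which is exactly what existence requires.
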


\section{Effects of the action by the Weyl group}\label{S:4}
Similarly to what happens in the classical case of invariant almost complex structures on flag manifolds (see \cite{SN}), we can describe the effects of the action by the Weyl group on the set of invariant generalized almost complex structures on $\mathbb{F}$.

Let $\mathcal{W}$ be the Weyl group generated by reflections with respect to the roots $\alpha\in \Pi$. It is well known that action of $\mathcal{W}$ on $\mathfrak{h}^\ast$ leaves $\Pi$ invariant and $\mathcal W \simeq N_U(\mathfrak{h})/T$ where $N_U(\mathfrak{h})$ represents the normalizer of $\mathfrak{h}$ in $U$ with respect the adjoint action. The group $N_U(\mathfrak{h})$ acts on $\mathfrak{n}^-\otimes \mathbb{C}$, and consequently on $(\mathfrak{n}^-\oplus (\mathfrak{n}^-)^\ast)\otimes \mathbb{C}$, by permuting the root systems. Thus, if $\mathbb{J}:\mathfrak{n}^-\oplus(\mathfrak{n}^-)^\ast\to \mathfrak{n}^-\oplus(\mathfrak{n}^-)^\ast$ is an invariant generalized almost complex structure on $\mathbb{F}$ then
\begin{equation}\label{WeylAction}
\overline{w}\cdot\mathbb{J}\cdot \overline{w}^{-1}:= (\Ad\oplus \Ad^\ast)(w)\circ \mathbb{J}\circ (\Ad\oplus \Ad^\ast)(w^{-1})
\end{equation}
is also an invariant generalized almost complex structure of the same type as $\mathbb{J}$, where  $w$ is any representative of the class 
$\overline{w}$ in $N_U(\mathfrak{h})$. Here we are using the fact that $\Ad(w)$ is an isometry with respect to the Cartan--Killing form. Since $\overline{w}\cdot\mathbb{J}\cdot\overline{w}^{-1}$ does not pend on the choice of representative  $\overline{w}$, we may use it for inducing a well defined action of the Weyl group $\mathcal{W}$ on the set of invariant generalized almost complex structures on $\mathbb{F}$.  Now, note  that $\mathbb{J}$ and $\overline{w}\cdot\mathbb{J}\cdot\overline{w}^{-1}$ are isomorphic as almost complex structures, that is, because the following diagram commutes
$$\xymatrix{
	\mathfrak{n}^-\oplus (\mathfrak{n}^-)^\ast \ar[d]_{\mathbb{J}}\ar[r]^{f(\overline{w})} & \mathfrak{n}^-\oplus (\mathfrak{n}^-)^\ast \ar[d]^{\overline{w}\cdot\mathbb{J}\cdot\overline{w}^{-1}}\\
	\mathfrak{n}^-\oplus (\mathfrak{n}^-)^\ast \ar[r]_{f(\overline{w})} & \mathfrak{n}^-\oplus (\mathfrak{n}^-)^\ast
}$$
where $f(\overline{w}):=(\Ad\oplus \Ad^\ast)(w)$.  We will denote the action given in \eqref{WeylAction} by $w\cdot \mathbb{J}$.	
\begin{remark}
	When we say that $N_U(\mathfrak{h})$ leaves invariant the set of roots associated to $\mathfrak{h}$ by permuting it, we mean that for $\alpha \in \Pi$ we get that $w\cdot \alpha:=\alpha\circ \Ad(w)\in \Pi$. This is because $\Ad(w)$ is an automorphism. It is easy to check that the root space associated to the root $\alpha\circ \Ad(w)$ is $\mathfrak{g}_{w\cdot \alpha}= \Ad(w^{-1})\mathfrak{g}_\alpha$. Therefore, we have that 
	$$\mathfrak{u}_{w\cdot \alpha}\oplus \mathfrak{u}_{w\cdot \alpha}^\ast=(\Ad\oplus \Ad^\ast)(w^{-1})(\mathfrak{u}_\alpha\oplus\mathfrak{u}^\ast_\alpha).$$
\end{remark}
If $\mathbb{J}=\bigoplus_{\alpha\in\Pi^+}\mathbb{J}_\alpha$ we can get an explicit description of the action \eqref{WeylAction}. Given  $w\in \mathcal{W}$ we have that
\begin{eqnarray*}
	w\cdot \mathbb{J} & = & \bigoplus_{\alpha\in\Pi^+}w\cdot\mathbb{J}_\alpha\\
	& = & \bigoplus_{\alpha\in\Pi^+}(\Ad\oplus \Ad^\ast)(w)\circ \mathbb{J}_\alpha\circ (\Ad\oplus \Ad^\ast)(w^{-1})\\
	& = & \bigoplus_{\alpha\in\Pi^+}\mathbb{J}_{w^{-1}\cdot\alpha},
\end{eqnarray*}
where $w^{-1}\cdot\alpha=\alpha\circ \Ad(w^{-1})$.

It is well known that if $\Sigma$ is a simple root system of $\mathfrak{g}$ then for each $w\in \mathcal{W}$ the set $w\cdot \Sigma:=\lbrace \alpha\circ \Ad(w^{-1}):\ \alpha\in\Sigma\rbrace$ defines another simple root system of $\mathfrak{g}$. We will denote by $w\cdot \Pi^+$ the corresponding set of positive roots associated to $w\cdot \Sigma$. It is simple to check that every triple of roots $(\alpha,\beta,\alpha+\beta)$ must be sent, by the action of the Weyl group, into one of the following possibilities: $(\alpha,\beta,\alpha+\beta)$, $(-\beta,\alpha+\beta,\alpha)$, $(-\alpha,\alpha+\beta,\beta)$, $(\alpha,-\alpha-\beta,-\beta)$, $(\beta,-\alpha-\beta,-\alpha)$, $(-\alpha,-\beta,-\alpha-\beta)$.
  
\begin{proposition}\label{WeylIntegrability}
	Let $\Sigma$ be a simple root system for $\mathfrak{g}$ with $\Pi^+$ the corresponding set of positive roots. If $\mathbb{J}$ is an invariant generalized complex structure on $\mathbb{F}$, then $w\cdot \mathbb{J}$ defines another invariant generalized complex structure on $\mathbb{F}$ of the same type.
\end{proposition}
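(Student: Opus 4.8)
I would prove this by showing that the Weyl action \eqref{WeylAction} on invariant structures is implemented by an honest diffeomorphism of $\mathbb F$, so that integrability and type are preserved automatically. By the remarks just before \eqref{WeylAction}, $w\cdot\mathbb J$ is already known to be an invariant generalized \emph{almost} complex structure of the same type as $\mathbb J$, so the only thing left to establish is that $w\cdot\mathbb J$ is \emph{integrable}.

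First I would identify the diffeomorphism. Fix a representative $w\in N_U(\mathfrak h)$; since $w\in U$ and $\Ad(w)\mathfrak h=\mathfrak h$, we have $\Ad(w)\mathfrak t=\mathfrak t$, so $w$ normalizes $T$ and conjugation $c_w\colon\mathbb F=U/T\to\mathbb F$, $gT\mapsto wgw^{-1}T$, is a well-defined diffeomorphism fixing the origin $b_0$, with differential at $b_0$ equal to $\Ad(w)$ on $T_{b_0}\mathbb F=\mathfrak n^-$; hence the induced map on $\mathbb T_{b_0}\mathbb F$ is $(\Ad\oplus\Ad^\ast)(w)=f(\overline w)$. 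A direct computation shows that $c_w$ intertwines the action on $\mathbb F$ of $w^{-1}uw$ with that of $u$, for every $u\in U$; combined with the $U$-invariance of $\mathbb J$, this gives that $(c_w)_\ast\mathbb J$ is again $U$-invariant. Since an invariant structure is determined by its value at $b_0$, and there $\big((c_w)_\ast\mathbb J\big)_{b_0}=f(\overline w)\,\mathbb J_{b_0}\,f(\overline w)^{-1}$, I conclude $w\cdot\mathbb J=(c_w)_\ast\mathbb J$.

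To conclude, I would use the naturality of the Courant bracket: for any diffeomorphism $\phi$, the bundle map $\phi_\ast\oplus(\phi^{-1})^\ast$ preserves the pairing \eqref{innerproduct} and the Courant bracket, hence the Nijenhuis tensor \eqref{GeneralNij}, and it carries the $+i$-eigenbundle of $\mathbb J$ onto that of $\phi_\ast\mathbb J$ without altering the codimension of its projection to $T\mathbb F$. Applying this to $\phi=c_w$ and using $w\cdot\mathbb J=(c_w)_\ast\mathbb J$, the integrability of $\mathbb J$ forces that of $w\cdot\mathbb J$, with the same type. The only delicate point is the previous paragraph: realizing $f(\overline w)$ as the differential of a genuine symmetry $c_w$ of $\mathbb F$ that is compatible with $U$-invariance; once that is in place, the rest is formal.

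Finally, a purely combinatorial variant is also available, matching the set-up preceding the statement: integrability of an invariant $\mathbb J$ is equivalent to requiring that no triple $(\mathbb J_\alpha,\mathbb J_\beta,\mathbb J_{\alpha+\beta})$ over positive roots with $\alpha+\beta$ a root has exactly two noncomplex slots and that every all-noncomplex such triple satisfies \eqref{Integrabilityconditions}. Using $(w\cdot\mathbb J)_\alpha=\mathbb J_{w^{-1}\cdot\alpha}$, the fact that $\{w^{-1}\alpha,w^{-1}\beta,w^{-1}(\alpha+\beta)\}$ with positive representatives is again a triple $(\mu,\nu,\mu+\nu)$ of positive roots related to $(\alpha,\beta,\alpha+\beta)$ by one of the six patterns listed before the statement, and the fact that passing from $\gamma$ to $-\gamma$ turns $\pm\mathcal J_0$ into $\mp\mathcal J_0$ while fixing the shape of a noncomplex slot, one verifies Table~\ref{integrab} row by row. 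In this approach the only part demanding real bookkeeping is the all-noncomplex row, where \eqref{Integrabilityconditions} must be shown to transform into an equivalent pair of equations — which is precisely where the Weyl-basis sign rule $m_{-\alpha,-\beta}=-m_{\alpha,\beta}$ enters.
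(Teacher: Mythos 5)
Your argument is correct, but your primary route is genuinely different from the paper's. The paper proves this combinatorially: it lists the six possible images $(\gamma_1,\gamma_2,\gamma_3)$ of a triple $(\alpha,\beta,\alpha+\beta)$ under the Weyl action, invokes the rules $\mathbb{J}_{-\alpha}=\mp\mathcal{J}_0$ and $a_{-\alpha}=a_\alpha$, $x_{-\alpha}=-x_\alpha$, $y_{-\alpha}=-y_\alpha$ from \cite{VS}, and checks directly that each resulting triple again lands in a row of Table \ref{integrab}, with the sign rules showing that the transformed versions of \eqref{Integrabilityconditions} are equivalent to the original system. That is exactly your ``combinatorial variant,'' so your second approach coincides with the paper's. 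Your first approach --- realizing $f(\overline w)$ as the derivative at $b_0$ of the conjugation diffeomorphism $c_w\colon gT\mapsto wgw^{-1}T$, checking that $(c_w)_\ast\mathbb{J}$ is again $U$-invariant and agrees with $w\cdot\mathbb{J}$ at the origin, and then invoking naturality of the Courant bracket (and of the projection to $T\mathbb{F}$) under diffeomorphisms --- is a cleaner, more conceptual proof that avoids all case-checking and makes the preservation of both integrability and type automatic; it also explains \emph{why} the table must be Weyl-invariant. What it costs is the need to verify the intertwining identity $c_w\circ L_u=L_{wuw^{-1}}\circ c_w$ and the identification of $d(c_w)_{b_0}$ with $\Ad(w)$, which you do correctly; the paper's computation, by contrast, yields the explicit parameter transformations that are reused later (e.g.\ in Remark \ref{x>0} and Section \ref{S:6}).

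One small caveat on your combinatorial variant: the stated ``equivalence'' (no triple with exactly two noncomplex slots, plus \eqref{Integrabilityconditions} for all-noncomplex triples) is only a necessary condition, not the full content of Table \ref{integrab}. The table also constrains the signs in the purely complex and mixed rows --- for instance $(\mathcal{J}_0,\mathcal{J}_0,-\mathcal{J}_0)$ and $(\mathcal{J}_\alpha,\mathcal{J}_0,-\mathcal{J}_0)$ do \emph{not} appear and are not integrable. Since you then say one verifies the table row by row, the proof you would actually carry out is the right one, but the paraphrase of the integrability criterion should be corrected.
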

\begin{proof}
	We know that $\mathbb{J}$
	is integrable if and only if for each triple of positive roots $(\alpha,\beta,\alpha+\beta)$ we have that $(\mathbb{J}_\alpha,\mathbb{J}_\beta,\mathbb{J}_{\alpha+\beta})$ 
	corresponds to one of the rows of Table \ref{integrab}. If $\mathbb{J}_\alpha=\pm\mathcal{J}_0$ is of complex type then $\mathbb{J}_{-\alpha}=\mp\mathcal{J}_0$ and if $\mathbb{J}_\alpha=\mathcal{J}_\alpha$ is of noncomplex type with
	$\mathcal{J}_\alpha={\tiny \left( 
	\begin{array}{cccc}
	a_\alpha & 0 & 0 & -x_\alpha\\ 
	0 &  a_\alpha& x_\alpha & 0\\
	0 & -y_\alpha & -a_\alpha & 0\\
	y_\alpha & 0 & 0 & -a_\alpha
	\end{array}%
	\right)}$ then $\mathbb{J}_{-\alpha}$ is also of noncomplex type where $a_{-\alpha}=a_\alpha$, $x_{-\alpha}=-x_\alpha$, and $y_{-\alpha}=-y_\alpha$ (see \cite{VS}). Therefore, a straightforward computation allows us to show that $(\mathbb{J}_{\gamma_1},\mathbb{J}_{\gamma_2},\mathbb{J}_{\gamma_3})$, where $(\gamma_1,\gamma_2,\gamma_3)$ is one of the six possibilities given above, corresponds to one of the rows of Table \ref{integrab}. It is important to notice that the identities $a_{-\alpha}=a_\alpha$ and $x_{-\alpha}=-x_\alpha$ imply that the systems obtained when $\mathbb{J}_{\gamma_1}$, $\mathbb{J}_{\gamma_2}$ and $\mathbb{J}_{\gamma_3}$ are of noncomplex type are equivalent to system \eqref{Integrabilityconditions}.
\end{proof}

\begin{remark}\label{x>0}
	As consequence of Theorem \ref{theta} and Proposition \ref{WeylIntegrability} we have that, up to the action of the Weyl group  an invariant generalized complex structure $\mathbb{J}$ on $\mathbb{F}$ satisfies (see also \cite{VS}):
	\begin{enumerate}
		\item[$\iota.$] if $\mathbb{J}_\alpha$ is of noncomplex type then $x_\alpha>0$, or
		\item[$\iota\iota.$] if $\mathbb{J}_\alpha$ is of complex type then $\mathbb{J}_\alpha=\mathcal{J}_0$.
	\end{enumerate}
\end{remark}

\section{Moduli space of generalized almost complex structures}\label{S:5}
The aim of this section is to classify all invariant generalized almost complex structures on a maximal flag $\mathbb{F}$ up to invariant $B$-transformations. As a consequence 
 we will then give an explicit expression for the invariant pure spinor line associated to each of
these structures. We will see that when we restrict the set of all invariant generalized almost complex structures to a specific subspace
 $\mathfrak{u}_\alpha\oplus \mathfrak{u}_\alpha^\ast$, then using $B$-transformations, all  structures of noncomplex type can be obtained from  structures of symplectic type. Hence, modulo 
$B$-transformations there remain only structures of either complex or symplectic types on  $\mathfrak{u}_\alpha$.

Now we explore the concept of moduli space of generalized almost complex structures using $B$-transformations. Let $M$ be a smooth manifold and consider the set
$$\mathcal{B}:=\lbrace e^B:\ B\in \Omega^2(M)\rbrace.$$
It is simple to check that $\mathcal{B}$ is a group with the natural commutative product
$$e^{B_1}\cdot e^{B_2}=\left( 
\begin{array}{cc}
1 & 0\\
B_1 & 1
\end{array}%
\right)\left( 
\begin{array}{cc}
1 & 0\\
B_2 & 1
\end{array}%
\right)=e^{B_1+B_2}.$$
If $\Omega_{cl}^2(M)$ denotes the set of closed 2-forms, then $\mathcal{B}_c=\lbrace e^B:\ B\in \Omega_{cl}^2(M)\rbrace$ is a subgroup of $\mathcal{B}$.
Let $\mathcal{M}_a$ (resp. $\mathcal{M}$) be the set of all generalized almost complex structures (resp. generalized complex structures) on $M$. It is easy to see that the map $\mathcal{B}\times \mathcal{M}_a\to \mathcal{M}_a$ given by $e^B\cdot \mathbb{J}:=e^{-B}\mathbb{J}e^B$ is a well-defined action of $\mathcal{B}$ on $\mathcal{M}_a$. The same expression allows us to give a well-defined action of $\mathcal{B}_c$ on $\mathcal{M}$.

We will deal with the following concept:
\begin{definition}\label{Bmoduli}
The moduli space of generalized almost complex structures on $M$ is defined as the quotient space $\mathfrak{M}_a:=\dfrac{\mathcal{M}_a}{\mathcal{B}}$ which is determined by the action above. The moduli space of generalized complex structures on $M$ is defined in a similar way as $\mathfrak{M}:=\dfrac{\mathcal{M}}{\mathcal{B}_c}$.
\end{definition}
\begin{remark}\label{int}
Every orbit in $\mathfrak{M}_a$ is composed of generalized almost complex structures of the same type.
\end{remark}
We will describe these moduli spaces in the case of invariant generalized complex structures on a maximal flag manifold $\mathbb{F}$. For this we consider differential 2-forms that are invariant by the adjoint representation.
\subsection{Effects of the action by invariant $B$-transformations}\label{SB:5.1}
Consider an invariant generalized almost complex structure $\mathbb{J}$ on a maximal flag $\mathbb{F}$ and  let $\alpha$ be a positive root.
\begin{remark}\label{SymplecticInvariantType}
	Suppose that $\mathbb{J}_\alpha$ is of noncomplex type with $a_\alpha=0$, then $\mathbb{J}_\alpha$ is of symplectic type. Indeed, the condition $x_\alpha y_\alpha=1$ and the identity $[A_\alpha,S_\alpha]=2iH_\alpha$ imply that
	$$\mathbb{J}_\alpha=\left( 
	\begin{array}{cc}
	0 & -\omega_\alpha^{-1}\\
	\omega_\alpha & 0
	\end{array}%
	\right),$$
	where $\omega_\alpha=\left( 
	\begin{array}{cc}
	0 & -1/x_\alpha \\
	1/x_\alpha  & 0
	\end{array}%
	\right)=-\dfrac{k_\alpha}{x_\alpha}\cdot \omega_{b_0}|_{\mathfrak{u}_\alpha}$. Here $\omega_{b_0}$ denotes the KKS symplectic form on $\mathfrak{n}^-$ and $k_\alpha=\dfrac{1}{2i\langle H,H_\alpha\rangle}$. 
\end{remark}
 Recall that the maximal isotropic subspace $\mathcal{L}_\alpha$ associated to $\mathbb{J}_\alpha$ is determined by \eqref{isotropic1}. If $\pi_\alpha$ denotes the projection from $\mathfrak{u}_\alpha\oplus \mathfrak{u}_\alpha^\ast$ onto $\mathfrak{u}_\alpha$, then $\Delta_\alpha=\pi_\alpha (\mathcal{L}_\alpha)$ is

$$\Delta_\alpha=\left\lbrace \begin{array}{lcc} 
 \mathbb{C} \{ A_\alpha \mp iS_\alpha\} & \textnormal{if} & \mathbb{J}_\alpha=\pm \mathcal{J}_0\\
 \mathbb{C} \{ x_\alpha A_\alpha, x_\alpha S_\alpha \} & \textnormal{if} & \mathbb{J}_\alpha= \mathcal{J}_\alpha.
\end{array}
\right.$$ 
Moreover
$$\Ann \Delta_\alpha=\left\lbrace \begin{array}{lcc} 
\mathbb{C} \{ A_\alpha^\ast \mp iS_\alpha^\ast\} & \textnormal{if} & \mathbb{J}_\alpha=\pm \mathcal{J}_0\\
 \{ 0 \} & \textnormal{if} & \mathbb{J}_\alpha= \mathcal{J}_\alpha
\end{array}
\right.,$$ 
since $\Ann \Delta_\alpha=\mathcal{L}_\alpha\cap \mathfrak{u}_\alpha^\ast$ and $\mathfrak{u}_\alpha^\ast=\mathbb{C}\{-S_\alpha^\ast\, A_\alpha^\ast\}$. Therefore
$$\textnormal{dim}_\mathbb{R}\Ann \Delta_\alpha=\left\lbrace \begin{array}{lcc} 
2 & \textnormal{if} & \mathbb{J}_\alpha=\pm \mathcal{J}_0\\
0 & \textnormal{if} & \mathbb{J}_\alpha= \mathcal{J}_\alpha.
\end{array}
\right.$$ 
This suggests  that $\mathbb{J}_\alpha=\pm \mathcal{J}_0$ has complex type, which is something that we already knew,  but 
more importantly it suggests that $\mathbb{J}_\alpha= \mathcal{J}_\alpha$ has symplectic type, that is, every invariant generalized almost complex structure of noncomplex type on $\mathfrak{u}_\alpha$ might be obtained by applying a $B$-transformation to a symplectic form on $\mathfrak{u}_\alpha$.

Let $B\in \bigwedge^2 \mathfrak{u}_\alpha^\ast$. Since our generalized almost complex structures are invariant, we may consider 
$B=-bS_\alpha^\ast\wedge A_\alpha^\ast$ where $b\in\mathbb{R}$, thus $B=\left( \begin{array}{cc}
0 & b\\
-b & 0
\end{array}%
\right)$ a skew-symmetric matrix. 
\begin{lemma} \label{Bsymp} A $B$-transformation of a generalized complex structure 
of symplectic type on $\mathfrak{u}_\alpha$ may produce a generalized complex structure of noncomplex type. 
\end{lemma}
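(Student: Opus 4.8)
The plan is to make the assertion concrete on a single root space $\mathfrak{u}_\alpha$ by writing down one symplectic-type structure and one invariant $B$-transformation that carries it to a noncomplex-type structure with $a_\alpha\neq 0$. By Remark~\ref{SymplecticInvariantType}, a symplectic-type structure on $\mathfrak{u}_\alpha$ is precisely the noncomplex-type structure with $a_\alpha=0$; in matrix form it is $\mathbb{J}_{\omega_\alpha}=\left(\begin{smallmatrix}0 & -\omega_\alpha^{-1}\\ \omega_\alpha & 0\end{smallmatrix}\right)$, where $\omega_\alpha=\left(\begin{smallmatrix}0 & -1/x_\alpha\\ 1/x_\alpha & 0\end{smallmatrix}\right)=-\frac{k_\alpha}{x_\alpha}\,\omega_{b_0}|_{\mathfrak{u}_\alpha}$, and in particular $x_\alpha\neq 0$ because $x_\alpha y_\alpha=1$. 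As invariant $2$-form I would take $B=-b\,S_\alpha^\ast\wedge A_\alpha^\ast$ with $b\in\mathbb{R}$, that is the skew matrix $\left(\begin{smallmatrix}0 & b\\ -b & 0\end{smallmatrix}\right)$, and let it act by the $B$-transformation $e^B$ of Definition~\ref{Bmoduli}.

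Next I would compute $e^{-B}\mathbb{J}_{\omega_\alpha}e^{B}$ via the formula from Example~\ref{ExampleS},
\[
e^{-B}\mathbb{J}_{\omega_\alpha}e^{B}=\begin{pmatrix} -\omega_\alpha^{-1}B & -\omega_\alpha^{-1}\\ \omega_\alpha+B\omega_\alpha^{-1}B & B\omega_\alpha^{-1}\end{pmatrix},
\]
carrying out the (routine) $2\times2$ block multiplications. One gets that the diagonal blocks are $-\omega_\alpha^{-1}B=b x_\alpha\,\mathrm{Id}$ and $B\omega_\alpha^{-1}=-b x_\alpha\,\mathrm{Id}$, while the off-diagonal blocks are $-\omega_\alpha^{-1}=\left(\begin{smallmatrix}0 & -x_\alpha\\ x_\alpha & 0\end{smallmatrix}\right)$ and $\omega_\alpha+B\omega_\alpha^{-1}B=\left(\begin{smallmatrix}0 & -(x_\alpha^{-1}+b^2 x_\alpha)\\ x_\alpha^{-1}+b^2 x_\alpha & 0\end{smallmatrix}\right)$. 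Comparing with the normal form $\mathcal{J}_\alpha$ of Remark~\ref{canonical}, this is exactly the noncomplex-type structure with parameters $a_\alpha=b x_\alpha$, the same $x_\alpha$, and $y_\alpha=x_\alpha^{-1}+b^2 x_\alpha$, and the constraint $a_\alpha^2=x_\alpha y_\alpha-1$ is automatically verified since $(b x_\alpha)^2=x_\alpha(x_\alpha^{-1}+b^2 x_\alpha)-1$.

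To conclude, I would note that $e^B$ is built from a $T$-invariant $2$-form, hence commutes with the $\mathrm{Ad}(T)$-action, so $e^{-B}\mathbb{J}_{\omega_\alpha}e^{B}$ is again an invariant generalized almost complex structure on $\mathfrak{u}_\alpha$, i.e. it belongs to $\mathcal{M}_\alpha$ and is thus classified by Lemma~\ref{Aalpha}. Choosing any $b\neq 0$ forces $a_\alpha=b x_\alpha\neq 0$, so by Remark~\ref{SymplecticInvariantType} the result is not of symplectic type; and by \eqref{TypeNChange} the $B$-transformation preserves the type, which is $k=0$ for a symplectic-type structure, so it is not of complex type ($k=2$) either. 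The only remaining possibility in Lemma~\ref{Aalpha} is noncomplex type, which is precisely what the explicit matrix above exhibits, so the $B$-transform of a symplectic-type structure can indeed be of noncomplex type. There is no serious obstacle here: the only delicate point is the bookkeeping of sign conventions in the identification $\mathfrak{u}_\alpha\cong\mathfrak{u}_\alpha^\ast$ coming from the KKS form and in the matrix presentations of $\omega_\alpha$ and $B$; once these are fixed as above, the statement reduces to the short computation just sketched.
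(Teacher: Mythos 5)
Your proof is correct and follows essentially the same route as the paper: the same choice of invariant $B=-b\,S_\alpha^\ast\wedge A_\alpha^\ast$, the same block computation of $e^{-B}\mathbb{J}_{\omega_\alpha}e^{B}$, and the same identification $a_\alpha=bx_\alpha$, $y_\alpha=1/x_\alpha+b^2x_\alpha$ verifying $a_\alpha^2=x_\alpha y_\alpha-1$. (One trivial quibble: on the two-real-dimensional space $\mathfrak{u}_\alpha$ the complex type is $k=1$, not $k=2$, but this does not affect your argument.)
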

\begin{proof}
	
Suppose that $\mathbb{J}_\alpha$ is of noncomplex type with $a_\alpha=0$, that is, $\mathbb{J}_\alpha$ has the form given in Remark \ref{SymplecticInvariantType}. Thus, we  get a new invariant generalized almost complex structure on 
$\mathfrak{u}_\alpha$ of type $k=0$ as follows.
$$e^{-B}\mathbb{J}_\alpha e^B=\left( 
\begin{array}{cc}
-\omega_\alpha^{-1}B & -\omega_\alpha^{-1}\\
\omega_\alpha+B\omega_\alpha^{-1}B& B\omega_\alpha^{-1}
\end{array}%
\right)={\tiny\left( 
\begin{array}{cccc}
bx_\alpha & 0 & 0 & -x_\alpha\\ 
0 &  bx_\alpha& x_\alpha & 0\\
0 & -b^2x_\alpha-1/x_\alpha & -bx_\alpha & 0\\
b^2x_\alpha+1/x_\alpha & 0 & 0 & -bx_\alpha
\end{array}%
\right)}.$$
Therefore, setting $a_\alpha=bx_\alpha$ and $y_\alpha=b^2x_\alpha+1/x_\alpha$ we obtain
$$e^{-B}\mathbb{J}_\alpha e^B={\tiny\left( 
\begin{array}{cccc}
a_\alpha & 0 & 0 & -x_\alpha\\ 
0 &  a_\alpha& x_\alpha & 0\\
0 & -y_\alpha & -a_\alpha & 0\\
y_\alpha & 0 & 0 & -a_\alpha
\end{array}%
\right)},$$
with $a_\alpha,x_\alpha,y_\alpha\in\mathbb{R}$ such that $a_\alpha^2=x_\alpha y_\alpha-1$.
\end{proof}

Motivated by Lemma \ref{Bsymp} we set up the following notation.

\begin{notation}\label{SymplecticAsso}
Let $\mathbb{J}_\alpha=\mathcal{J}_\alpha$ be a generalized complex structure of noncomplex type on $\mathfrak{u}_\alpha$ as in Remark \ref{canonical}. We will denote by $$\mathbb{J}_{\omega_\alpha}:=\left( 
\begin{array}{cccc}
0 & 0 & 0 & -x_\alpha\\ 
0 & 0 & x_\alpha & 0\\
0 & -1/x_\alpha & 0 & 0\\
1/x_\alpha & 0 & 0 & 0
\end{array}%
\right),$$
and call it the generalized complex structure of symplectic type induced by $\mathbb{J}_\alpha$.
\end{notation}

As consequence of Lemma \ref{Bsymp} all generalized complex structures on $\mathfrak{u}_\alpha\oplus \mathfrak{u}_\alpha^\ast$ of noncomplex type are actually $B$-symplectic generalized almost complex structures.

\begin{proposition}\label{BFieldSymplectic}
Let $\alpha$ be a positive root. Given a generalized complex structure $\mathbb{J}_\alpha$ on $\mathfrak{u}_\alpha$ of noncomplex type, there exists $B$-transformation  $B_\alpha\in \bigwedge^2 \mathfrak{u}_\alpha^\ast$ such that 
$$e^{-B_\alpha}\mathbb{J}_{\omega_\alpha} e^{B_\alpha} =\mathbb{J}_\alpha.$$
\end{proposition}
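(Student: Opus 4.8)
The plan is essentially to reverse the computation carried out in Lemma~\ref{Bsymp}. Given a noncomplex-type structure $\mathbb{J}_\alpha$ with parameters $a_\alpha, x_\alpha, y_\alpha$ satisfying $a_\alpha^2 = x_\alpha y_\alpha - 1$, I want to produce $B_\alpha = -b\, S_\alpha^\ast \wedge A_\alpha^\ast$ for a suitable $b \in \mathbb{R}$ so that conjugating $\mathbb{J}_{\omega_\alpha}$ by $e^{B_\alpha}$ returns $\mathbb{J}_\alpha$. Comparing the matrix $e^{-B}\mathbb{J}_{\omega_\alpha}e^{B}$ computed in the proof of Lemma~\ref{Bsymp} with the canonical form of $\mathbb{J}_\alpha$ in Remark~\ref{canonical}, the first step is to read off that one must set $b x_\alpha = a_\alpha$, i.e.\ $b = a_\alpha / x_\alpha$ (note $x_\alpha \neq 0$ since otherwise $a_\alpha^2 = -1$, impossible over $\mathbb{R}$; this is exactly the observation in Remark~\ref{topoS}). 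So $B_\alpha = -\dfrac{a_\alpha}{x_\alpha} S_\alpha^\ast \wedge A_\alpha^\ast$.

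The second step is to verify that with this choice of $b$ the remaining entries match automatically: the $(3,2)$ and $(4,1)$ entries of the conjugated matrix are $-(b^2 x_\alpha + 1/x_\alpha)$ and $b^2 x_\alpha + 1/x_\alpha$ respectively, so I must check that $b^2 x_\alpha + 1/x_\alpha = y_\alpha$. Substituting $b = a_\alpha/x_\alpha$ gives $b^2 x_\alpha + 1/x_\alpha = a_\alpha^2/x_\alpha + 1/x_\alpha = (a_\alpha^2 + 1)/x_\alpha$, and by the constraint $a_\alpha^2 + 1 = x_\alpha y_\alpha$ this equals $y_\alpha$, as required. The $(1,1)$, $(2,2)$, $(1,4)$, $(2,3)$ entries already match by construction, and the skew-symmetry/isometry structure is preserved because $e^{B_\alpha}$ lies in $SO(\mathfrak{u}_\alpha \oplus \mathfrak{u}_\alpha^\ast)$. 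Finally one notes $\mathbb{J}_{\omega_\alpha}$ is indeed of symplectic type (it is the $a_\alpha = 0$ case of Remark~\ref{SymplecticInvariantType}, with symplectic form $\omega_\alpha = -\dfrac{k_\alpha}{x_\alpha}\omega_{b_0}|_{\mathfrak{u}_\alpha}$), so the statement is proved.

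There is essentially no obstacle here: the proposition is a bookkeeping consequence of the explicit $4\times 4$ matrix computation already displayed in Lemma~\ref{Bsymp}, the only genuine content being the solvability of $bx_\alpha = a_\alpha$ (which requires $x_\alpha \neq 0$) and the algebraic identity $(a_\alpha^2+1)/x_\alpha = y_\alpha$ coming from the defining relation of the noncomplex family. The mild subtlety worth flagging explicitly is why $x_\alpha \neq 0$, and that is immediate from $a_\alpha^2 = x_\alpha y_\alpha - 1 \geq -1$ forcing $x_\alpha y_\alpha \geq 0$ together with the fact that $x_\alpha = 0$ would force $a_\alpha^2 = -1$. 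So the proof is just: pick $b = a_\alpha/x_\alpha$, invoke the computation of Lemma~\ref{Bsymp}, and check the single identity $b^2 x_\alpha + 1/x_\alpha = y_\alpha$ via $a_\alpha^2 = x_\alpha y_\alpha - 1$.
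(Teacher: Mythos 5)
Your proposal is correct and follows exactly the paper's own argument: the paper likewise defines $B_\alpha=-\frac{a_\alpha}{x_\alpha}S_\alpha^\ast\wedge A_\alpha^\ast$ and appeals to the matrix computation of Lemma \ref{Bsymp} together with the relation $a_\alpha^2=x_\alpha y_\alpha-1$. Your explicit check that $x_\alpha\neq 0$ and the verification $(a_\alpha^2+1)/x_\alpha=y_\alpha$ are just the details the paper leaves implicit.
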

\begin{proof}
Suppose  $\mathbb{J}_\alpha={\tiny \left( 
\begin{array}{cccc}
a_\alpha & 0 & 0 & -x_\alpha\\ 
0 &  a_\alpha& x_\alpha & 0\\
0 & -y_\alpha & -a_\alpha & 0\\
y_\alpha & 0 & 0 & -a_\alpha
\end{array}%
\right)}$ with $a_\alpha^2=x_\alpha y_\alpha-1$ and define $B_\alpha=-\dfrac{a_\alpha}{x_\alpha}S_\alpha^\ast\wedge A_\alpha^\ast$. Then as in Lemma \ref{Bsymp}, the identity $a_\alpha^2=x_\alpha y_\alpha-1$ implies that $e^{-B_\alpha}\mathbb{J}_{\omega_\alpha} e^{B_\alpha} =\mathbb{J}_\alpha$. 
\end{proof}
\begin{corollary}\label{BField2}
Let $\alpha$ be a positive root. Let $\mathbb{J}_\alpha$ and $\mathbb{J}_\alpha'$ be two generalized complex structures on $\mathfrak{u}_\alpha$ of noncomplex type with
$$\mathcal{J}_{\alpha}={\tiny\left( 
\begin{array}{cccc}
a_\alpha & 0 & 0 & -x_\alpha\\ 
0 &  a_\alpha& x_\alpha & 0\\
0 & -y_\alpha & -a_\alpha & 0\\
y_\alpha & 0 & 0 & -a_\alpha
\end{array}%
\right)} \quad \textnormal{and}\quad \mathcal{J}_{\alpha}'={\tiny\left( 
\begin{array}{cccc}
b_\alpha & 0 & 0 & -x_\alpha\\ 
0 &  b_\alpha& x_\alpha & 0\\
0 & -z_\alpha & -b_\alpha & 0\\
z_\alpha & 0 & 0 & -b_\alpha
\end{array}%
\right)}.$$
Then, there exists a $B$-transformation $B_\alpha\in \bigwedge^2 \mathfrak{u}_\alpha^\ast$ such that
$$e^{-B_\alpha}\mathbb{J}_\alpha e^{B_\alpha} =\mathbb{J}_\alpha'.$$
\end{corollary}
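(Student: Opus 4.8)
The plan is to deduce the corollary from Proposition \ref{BFieldSymplectic} together with the fact that the $B$-transformations of $\mathfrak{u}_\alpha$ form an abelian group. First I would observe that the generalized complex structure of symplectic type $\mathbb{J}_{\omega_\alpha}$ attached to a noncomplex structure on $\mathfrak{u}_\alpha$ in Notation \ref{SymplecticAsso} depends \emph{only} on the parameter $x_\alpha$. Since by hypothesis $\mathbb{J}_\alpha$ and $\mathbb{J}_\alpha'$ share the same value of $x_\alpha$, they induce one and the same structure $\mathbb{J}_{\omega_\alpha}$; this is the only point at which the specific shape of the matrices $\mathcal{J}_\alpha,\mathcal{J}_\alpha'$ (and the relations $a_\alpha^2=x_\alpha y_\alpha-1$, $b_\alpha^2=x_\alpha z_\alpha-1$) is used.

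Next, by Proposition \ref{BFieldSymplectic} there exist $B$-transformations $B_\alpha^{(1)},B_\alpha^{(2)}\in\bigwedge^2\mathfrak{u}_\alpha^\ast$, explicitly $B_\alpha^{(1)}=-\frac{a_\alpha}{x_\alpha}S_\alpha^\ast\wedge A_\alpha^\ast$ and $B_\alpha^{(2)}=-\frac{b_\alpha}{x_\alpha}S_\alpha^\ast\wedge A_\alpha^\ast$, such that $e^{-B_\alpha^{(1)}}\mathbb{J}_{\omega_\alpha}e^{B_\alpha^{(1)}}=\mathbb{J}_\alpha$ and $e^{-B_\alpha^{(2)}}\mathbb{J}_{\omega_\alpha}e^{B_\alpha^{(2)}}=\mathbb{J}_\alpha'$. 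From the first identity, $\mathbb{J}_{\omega_\alpha}=e^{B_\alpha^{(1)}}\mathbb{J}_\alpha e^{-B_\alpha^{(1)}}$; substituting this into the second and using that $e^{B}e^{B'}=e^{B+B'}$ for $2$-forms on $\mathfrak{u}_\alpha$ (so that these $B$-transformations commute), I get
$$\mathbb{J}_\alpha'=e^{-B_\alpha^{(2)}}e^{B_\alpha^{(1)}}\,\mathbb{J}_\alpha\,e^{-B_\alpha^{(1)}}e^{B_\alpha^{(2)}}=e^{-(B_\alpha^{(2)}-B_\alpha^{(1)})}\,\mathbb{J}_\alpha\,e^{B_\alpha^{(2)}-B_\alpha^{(1)}}.$$
Hence setting $B_\alpha:=B_\alpha^{(2)}-B_\alpha^{(1)}=\frac{a_\alpha-b_\alpha}{x_\alpha}\,S_\alpha^\ast\wedge A_\alpha^\ast$ yields $e^{-B_\alpha}\mathbb{J}_\alpha e^{B_\alpha}=\mathbb{J}_\alpha'$, which is the claim.

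Alternatively, one could argue by direct computation in the spirit of Lemma \ref{Bsymp}: take $B_\alpha=-c\,S_\alpha^\ast\wedge A_\alpha^\ast$ with $c\in\mathbb{R}$ undetermined, compute the $4\times4$ matrix $e^{-B_\alpha}\mathbb{J}_\alpha e^{B_\alpha}$, and compare entries with $\mathcal{J}_\alpha'$: the $(1,4)$ and $(2,3)$ entries are unchanged (and equal those of $\mathcal{J}_\alpha'$ since the $x_\alpha$ agree), the $(1,1)$ entry forces $c=(b_\alpha-a_\alpha)/x_\alpha$, and the $(4,1)$ entry then comes out correctly precisely because of the relations $a_\alpha^2=x_\alpha y_\alpha-1$ and $b_\alpha^2=x_\alpha z_\alpha-1$. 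I do not expect any genuine obstacle here: the computation is routine, and the conceptual argument above makes transparent that the statement is nothing more than transitivity of the $\mathcal{B}$-action on the $B$-orbit of $\mathbb{J}_{\omega_\alpha}$, the only hypothesis that is actually needed being that $\mathbb{J}_\alpha$ and $\mathbb{J}_\alpha'$ have the same $x_\alpha$.
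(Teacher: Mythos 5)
Your argument is correct and is essentially the paper's own proof: both apply Proposition \ref{BFieldSymplectic} twice to relate $\mathbb{J}_\alpha$ and $\mathbb{J}_\alpha'$ to the common symplectic structure $\mathbb{J}_{\omega_\alpha}$ (which depends only on $x_\alpha$) and then take the difference of the two $B$-transformations using the abelian group law $e^{B}e^{B'}=e^{B+B'}$. Your explicit formula $B_\alpha=\frac{a_\alpha-b_\alpha}{x_\alpha}S_\alpha^\ast\wedge A_\alpha^\ast$ matches the paper's $B_\alpha=B_\alpha'-\hat{B}_\alpha$.
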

\begin{proof}
It is clear that $\mathbb{J}_{\omega_\alpha}=\mathbb{J}_{\omega_\alpha}'$. By Proposition \ref{BFieldSymplectic}, there exist $\hat{B}_\alpha$ and $B_\alpha'$ in $\bigwedge^2 \mathfrak{u}_\alpha^\ast$ such that
$$e^{\hat{B}_\alpha}\mathbb{J}_\alpha e^{-\hat{B}_\alpha}=e^{B_\alpha'}\mathbb{J}_\alpha' e^{-B_\alpha'}.$$
As in this case the identity $\exp(\hat{B}+B')=\exp(\hat{B})\cdot\exp(B')$ holds, we get the result for $B_\alpha=B_\alpha'-\hat{B}_\alpha$.
\end{proof}
\begin{remark}
By Proposition \ref{BFieldSymplectic} we have that if $\mathbb{J}_\alpha$ is of noncomplex type for some $\alpha\in\Pi^+$, the subspaces $U^k_\alpha$ decomposing the space of 
forms $\bigwedge^\bullet (\mathfrak{u}_\alpha^\ast\otimes\mathbb{C})$  are given by (see \cite{Ca})
$$U^k_\alpha=e^{B_\alpha+i\omega_\alpha}e^{\frac{-\omega^{-1}_\alpha}{2i}} \bigwedge\!^{n-k}(\mathfrak{u}_\alpha^\ast\otimes\mathbb{C}),$$
where	$\omega_\alpha=-\dfrac{k_\alpha}{x_\alpha}\omega_{b_0}|_{\mathfrak{u}_\alpha}$ and $B_\alpha=-\dfrac{a_\alpha}{x_\alpha}S_\alpha^\ast\wedge A_\alpha^\ast$. 
\end{remark}
Let us now suppose that $\mathbb{J}_\alpha$ is of complex type (i.e. type  $k=1$), namely $\mathbb{J}_\alpha=\pm \mathcal{J}_0$. If $\mathbb{J}_\alpha=\mathcal{J}_0$
we have that
$$\mathbb{J}_\alpha=\left( 
\begin{array}{cc}
-J_0 & 0\\
0 & J_0^\ast
\end{array}%
\right)\qquad\textnormal{where}\qquad J_0=\left( 
\begin{array}{cc}
0 & 1 \\
-1  & 0
\end{array}%
\right).$$
If we apply a $B$-transform to $\mathbb{J}_\alpha$, we get another generalized complex structure on $\mathfrak{u}_\alpha$ of type $k=1$. It is easy to check that for every $B\in \bigwedge^2 \mathfrak{u}_\alpha^\ast$
$$e^{-B}\mathbb{J}_\alpha e^B=\left( 
\begin{array}{cc}
-J_0 & 0\\
0 & J^\ast_0
\end{array}%
\right)=\mathbb{J}_\alpha=\mathcal{J}_0,$$
because $BJ_0+J^\ast_0 B=0$.
\begin{proposition}\label{BFieldComplex}
	Let $\alpha$ be a positive root. If $\mathbb{J}_\alpha$ is a generalized complex structure of complex type on $\mathfrak{u}_\alpha$, then for every $B$-transformation $B\in \bigwedge^2 \mathfrak{u}_\alpha^\ast$
	$$e^{-B}\mathbb{J}_{\alpha} e^{B} =\mathbb{J}_\alpha.$$
\end{proposition}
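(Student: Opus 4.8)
The plan is to reduce the statement to the matrix identity $BJ+J^\ast B=0$, where $J$ is the almost complex structure underlying $\mathbb{J}_\alpha$, and then to quote the formula for $B$-transforms of complex-type structures from Example~\ref{ExampleC}. First I would recall that, since only invariant $B$-transformations are relevant on $\mathfrak{u}_\alpha$, every admissible $B\in\bigwedge^2\mathfrak{u}_\alpha^\ast$ has the form $B=-b\,S_\alpha^\ast\wedge A_\alpha^\ast$ with $b\in\mathbb{R}$; in the (dual) bases $\{A_\alpha,S_\alpha\}$, $\{-S_\alpha^\ast,A_\alpha^\ast\}$ this is the skew matrix $\left(\begin{smallmatrix}0&b\\-b&0\end{smallmatrix}\right)=bJ_0$ with $J_0=\left(\begin{smallmatrix}0&1\\-1&0\end{smallmatrix}\right)$, exactly as in the discussion preceding the statement (the overall sign of this identification is immaterial below). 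I would also record $J_0^2=-1$ and $J_0^\ast=J_0^{\mathsf{T}}=-J_0$, and write a complex-type structure as $\mathbb{J}_\alpha=\pm\mathcal{J}_0=\left(\begin{smallmatrix}-J&0\\0&J^\ast\end{smallmatrix}\right)$ with $J=\pm J_0$.

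By the computation in Example~\ref{ExampleC}, for any $B\in\bigwedge^2\mathfrak{u}_\alpha^\ast$,
$$e^{-B}\mathbb{J}_\alpha e^{B}=\begin{pmatrix}-J&0\\ BJ+J^\ast B & J^\ast\end{pmatrix},$$
so the claim amounts to the vanishing of the lower-left block. With $B=bJ_0$ and $J=J_0$ one gets $BJ_0=bJ_0^2=-bI$ and $J_0^\ast B=(-J_0)(bJ_0)=-bJ_0^2=bI$, hence $BJ_0+J_0^\ast B=0$; for $J=-J_0$ the block is $-(BJ_0+J_0^\ast B)=0$ as well. Thus $e^{-B}\mathbb{J}_\alpha e^{B}=\mathbb{J}_\alpha$ for every invariant $B$, which for $\mathbb{J}_\alpha=\mathcal{J}_0$ recovers the computation made just before the statement.

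There is no genuine obstacle here: the only substantive point is the anticommutation $BJ_0=-J_0^\ast B$, and it is a rank-$2$ phenomenon, since $\bigwedge^2\mathfrak{u}_\alpha^\ast$ is one-dimensional, so the invariant $B$ is forced to be a scalar multiple of $J_0$ itself and therefore anticommutes with $J_0$ through $J_0^\ast=-J_0$. I would finish by adding, in parallel with the remark following Example~\ref{ExampleC}, that an invariant $B$ on $\mathfrak{u}_\alpha$ has constant coefficients and is hence closed, so the same conclusion descends to the action of $\mathcal{B}_c$ on integrable structures; in particular $\pm\mathcal{J}_0$ are fixed points of the $B$-transformation action, which is precisely what is needed for the global statement of Theorem~\ref{Bmoduli1}.
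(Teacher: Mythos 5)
Your proof is correct and follows essentially the same route as the paper: the authors also invoke the block formula from Example \ref{ExampleC} and observe that the lower-left block $BJ_0+J_0^\ast B$ vanishes because the one-dimensionality of $\bigwedge^2\mathfrak{u}_\alpha^\ast$ forces $B$ to be a scalar multiple of $J_0$. Your explicit verification of the anticommutation and the remark on closedness of invariant $B$ are consistent with, and slightly more detailed than, the paper's one-line justification.
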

In other words, generalized complex structures of complex type on $\mathfrak{u}_\alpha$ are fixed points by the $B$-transformation action.

\begin{remark}
By Proposition \ref{BFieldComplex}, the subspaces $U^k_\alpha$ decomposing the space of 
forms $\bigwedge^\bullet (\mathfrak{u}_\alpha^\ast\otimes\mathbb{C})$  are given by (see \cite{G1})
$$U^k_\alpha=\bigoplus_{p-q=k}\bigwedge^{p,q}\mathfrak{u}_\alpha^\ast,$$
where $\bigwedge^{p,q}\mathfrak{u}_\alpha^\ast$ is the standard $(p,q)$-decomposition of forms of a complex space.
\end{remark}

We can now describe the effect of  $B$-transformations on $\mathcal M_\alpha$.
\begin{proposition}\label{AmoduloB} Let $\alpha$ be a positive root. The set  $\displaystyle \mathfrak{M}_\alpha(\mathbb{F})=\frac{\mathcal M_\alpha}{B}$ of equivalence classes of generalized complex structures on $\mathfrak{u}_\alpha$ modulo 
the action of $B$-transformations consists of 2 disjoint sets:
\begin{enumerate}

\item[$\iota$.] a punctured real line $\mathbb{R}^\ast$ parametrizing structures of symplectic type, and 

\item[$\iota\iota$.] 2 extra points $\pm \mathbf{0}$ corresponding to $\pm \mathcal{J}_0$.

\end{enumerate}
\end{proposition}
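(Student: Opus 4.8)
The plan is to combine Lemma \ref{Aalpha} with the two propositions just established, namely Proposition \ref{BFieldSymplectic} and Proposition \ref{BFieldComplex}, and then to identify the resulting quotient set explicitly. By Lemma \ref{Aalpha}, $\mathcal M_\alpha$ is the disjoint union of the two-dimensional family of noncomplex-type structures $\mathcal J_\alpha$ cut out by $a_\alpha^2 - x_\alpha y_\alpha = -1$ in $\mathbb R^3$ and the two isolated complex-type points $\pm\mathcal J_0$. Since a $B$-transformation preserves type (by \eqref{TypeNChange}, or Remark \ref{int}), the $B$-action respects this decomposition, so it suffices to analyze each piece separately and take the union of the quotients.

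First I would treat the noncomplex piece. By Proposition \ref{BFieldSymplectic}, every $\mathbb J_\alpha$ of noncomplex type satisfies $e^{-B_\alpha}\mathbb J_{\omega_\alpha}e^{B_\alpha} = \mathbb J_\alpha$ for a suitable invariant $B_\alpha$, so each orbit contains a representative of symplectic type $\mathbb J_{\omega_\alpha}$; moreover by Corollary \ref{BField2}, any two noncomplex structures with the same $x_\alpha$ lie in the same $B$-orbit. The remaining point is to check that different values of $x_\alpha$ give different orbits: this follows because $B$-transformations preserve the projection $\Delta_\alpha = \pi_\alpha(\mathcal L_\alpha)$, and from \eqref{isotropic1} one reads off that $\Delta_\alpha = \mathbb C\{x_\alpha A_\alpha, x_\alpha S_\alpha\} = \mathbb C\{A_\alpha, S_\alpha\}$ is independent of $x_\alpha$ — so instead one uses that the symplectic form $\omega_\alpha = -\tfrac{k_\alpha}{x_\alpha}\omega_{b_0}|_{\mathfrak u_\alpha}$ attached to $\mathbb J_{\omega_\alpha}$ (its imaginary part of the associated two-form) is a $B$-transformation invariant of the orbit, and since $k_\alpha \neq 0$ the assignment $\mathbb J_\alpha \mapsto x_\alpha$ descends to a bijection between this set of orbits and $\mathbb R^\ast$ (note $x_\alpha \neq 0$ because $a_\alpha^2 = x_\alpha y_\alpha - 1$ forbids $x_\alpha = 0$). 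This gives item $\iota$.

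Next I would treat the complex piece: by Proposition \ref{BFieldComplex}, each of $\pm\mathcal J_0$ is a fixed point of the $B$-action, so these contribute exactly two singleton orbits $\pm\mathbf 0$, giving item $\iota\iota$. Assembling the two cases, $\mathfrak M_\alpha(\mathbb F) = \mathbb R^\ast \sqcup \{+\mathbf 0\} \sqcup \{-\mathbf 0\}$ as a set, which is the claimed description.

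I expect the main obstacle to be the injectivity statement in item $\iota$ — i.e., showing that two noncomplex structures with distinct $x_\alpha$ are genuinely not $B$-equivalent. One must pin down a bona fide $B$-invariant distinguishing them; the cleanest route is the pure-spinor line: by Proposition \ref{Chevalley} (or \eqref{PureSpinor}) the orbit of $\mathbb J_\alpha$ corresponds to the pure spinor $e^{B_\alpha + i\omega_\alpha}$ on $\mathfrak u_\alpha$, and $B$-transforming shifts $B_\alpha$ by a closed (here, any invariant) two-form while leaving $\omega_\alpha$ untouched, so the cohomology-independent datum $\omega_\alpha$, hence $x_\alpha$, is an orbit invariant. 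Once that is in hand, everything else is bookkeeping with the results already proved. I would also remark that, as the orbits are literally points of a single real line together with two extra isolated points, this already anticipates the topological statement in Theorem \ref{Bmoduli1}, via the discussion of Remark \ref{topoS}.
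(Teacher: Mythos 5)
Your proposal is correct and follows essentially the same route as the paper: decompose $\mathcal M_\alpha$ via Lemma \ref{Aalpha}, use Proposition \ref{BFieldSymplectic} (with Corollary \ref{BField2}) to collapse each noncomplex orbit onto its symplectic representative labelled by $x_\alpha$, and use Proposition \ref{BFieldComplex} to see $\pm\mathcal J_0$ as fixed points. The only difference is that you justify the injectivity of $x_\alpha$ as an orbit invariant more explicitly (via the imaginary part $\omega_\alpha$ of the pure spinor being untouched by $B$-transformations), where the paper simply appeals to Remark \ref{SymplecticInvariantType} that distinct $x_\alpha$ give distinct symplectic structures; your version is, if anything, the more complete argument.
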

\begin{proof}
The structures $\pm \mathcal{J}_0$ are fixed points of the action by $B$-transformations. So, we need only to consider structures of noncomplex type. By Lemma \ref{Aalpha}, structures in $\mathcal M_\alpha$ which are 
of noncomplex type are parameterized by a real surface $a_\alpha^2 -x_\alpha y_\alpha=-1$ in $\mathbb{R}^3$, and by Proposition \ref{BFieldSymplectic} every point on this surface
is the image of a generalized complex structure of symplectic type by a $B$-transformation determined by $x_\alpha$. 
Hence the quotient of this real surface by $B$-transformations reduces to a punctured line
holding the values of $x_\alpha$. Since by Remark \ref{SymplecticInvariantType} we have that generalized complex structures of symplectic type corresponding to parameters  $x_\alpha \neq x_\alpha'$ are distinct.
So, points on this  real line represent  inequivalent classes. 
\end{proof}

\begin{remark}\label{topology1} Note that  the parameters $x_\alpha   \in \mathbb{R}^\ast$  represent structures of noncomplex type $\mathcal J_\alpha$,
 as in Remark \ref{canonical},  and
such structures are matrices which have entries
 12, 21, 34, and 43 all zero. Therefore a sequence of structures of noncomplex type can not converge to either one of the matrices $\pm \mathcal J_0$.
As a consequence, the points $\pm \mathbf{0}$ are isolated in $ \mathfrak{M}_\alpha(\mathbb{F})$.
\end{remark}

\subsection{Moduli space}
Now we want to use the results obtained in subsection \ref{SB:5.1} for clarifying which are the effects of the action by invariant $B$-transformations globally, that is, when we consider the action on the set of all invariant generalized almost complex structures globally on $\mathbb{F}$. For simplicity suppose that $\Pi^+=\{\alpha_1,\alpha_2\}$ and let $\mathbb{J}$ and $\mathbb{J}'$ be two invariant generalized almost complex structure on $\mathbb{F}$. Let us also assume that for $j=1,2$ we have that $\mathbb{J}_{\alpha_j}$ and $\mathbb{J}_{\alpha_j}'$ are of noncomplex type with
$$\mathcal{J}_{\alpha_j}=\left( 
\begin{array}{cc}
\mathcal{A}_{\alpha_j} & \mathcal{X}_{\alpha_j} \\
\mathcal{Y}_{\alpha_j}  & -\mathcal{A}_{\alpha_j}
\end{array}%
\right)\quad \textnormal{and}\quad  \mathcal{J}_{\alpha_j}'=\left( 
\begin{array}{cc}
\mathcal{A}_{\alpha_j}' & \mathcal{X}_{\alpha_j} \\
\mathcal{Y}_{\alpha_j}'  & -\mathcal{A}_{\alpha_j}'
\end{array}%
\right).$$
By Corollary \ref{BField2}, there exist $B$-transformations $B_{\alpha_1}\in\bigwedge^2\mathfrak{u}_{\alpha_1}^\ast$ and $B_{\alpha_2}\in\bigwedge^2\mathfrak{u}_{\alpha_2}^\ast$   such that
$$e^{-B_{\alpha_1}}\mathbb{J}_{\alpha_1} e^{B_{\alpha_1}} =\mathbb{J}_{\alpha_1}'\qquad\textnormal{and}\qquad e^{-B_{\alpha_2}}\mathbb{J}_{\alpha_2} e^{B_{\alpha_1}} =\mathbb{J}_{\alpha_2}'.$$
These identities imply that
$$\mathcal{J}_{\alpha_j}'=\left( 
\begin{array}{cc}
\mathcal{A}_{\alpha_j}+\mathcal{X}_{\alpha_j}B_j & \mathcal{X}_{\alpha_j} \\
-\mathcal{A}_{\alpha_j}B_j -B_j\mathcal{A}_{\alpha_j}+\mathcal{Y}_{\alpha_j}-B_j\mathcal{X}_{\alpha_j}B_j & -B_j\mathcal{X}_{\alpha_j}-\mathcal{A}_{\alpha_j}
\end{array}%
\right)=\left( 
\begin{array}{cc}
\mathcal{A}_{\alpha_j}' & \mathcal{X}_{\alpha_j} \\
\mathcal{Y}_{\alpha_j}'  & -\mathcal{A}_{\alpha_j}'
\end{array}%
\right).$$
Define $B\in\bigwedge^2\mathfrak{n}^-$ as $B=\left( 
\begin{array}{cc}
	B_1 & 0 \\
	0  & B_2
\end{array}%
\right)$. Setting 
$$\mathbb{J}=\left( 
\begin{array}{cccc}
\mathcal{A}_{\alpha_1} & 0 & \mathcal{X}_{\alpha_1} & 0\\ 
0 &  \mathcal{A}_{\alpha_2} & 0 & \mathcal{X}_{\alpha_2}\\
\mathcal{Y}_{\alpha_1} & 0 & -\mathcal{A}_{\alpha_1} & 0\\
0 & \mathcal{Y}_{\alpha_2} & 0 & -\mathcal{A}_{\alpha_2}
\end{array}%
\right),$$
we get that
$$e^{-B}\mathbb{J}e^{B}={\tiny\left( 
\begin{array}{cccc}
1 & 0 & 0 & 0\\ 
0 & 1 & 0 & 0\\
-B_1 & 0 & 1 & 0\\
0 & -B_2 & 0 & 1
\end{array}%
\right)\left( 
\begin{array}{cccc}
\mathcal{A}_{\alpha_1} & 0 & \mathcal{X}_{\alpha_1} & 0\\ 
0 &  \mathcal{A}_{\alpha_2} & 0 & \mathcal{X}_{\alpha_2}\\
\mathcal{Y}_{\alpha_1} & 0 & -\mathcal{A}_{\alpha_1} & 0\\
0 & \mathcal{Y}_{\alpha_2} & 0 & -\mathcal{A}_{\alpha_2}
\end{array}%
\right)\left( 
\begin{array}{cccc}
1 & 0 & 0 & 0\\ 
0 & 1 & 0 & 0\\
B_1 & 0 & 1 & 0\\
0 & B_2 & 0 & 1
\end{array}%
\right)}=\mathbb{J}'.$$
Let us now suppose that $\mathbb{J}_{\alpha_1}=\mathbb{J}_{\alpha_1}'=\mathcal{J}_0$ (or $-\mathcal{J}_0$) are of complex type and $\mathbb{J}_{\alpha_2}$  and $\mathbb{J}_{\alpha_2}'$ are of noncomplex type with
$$\mathcal{J}_{\alpha_2}=\left( 
\begin{array}{cc}
\mathcal{A}_{\alpha_1} & \mathcal{X}_{\alpha_2} \\
\mathcal{Y}_{\alpha_2}  & -\mathcal{A}_{\alpha_2}
\end{array}%
\right),\qquad  \mathcal{J}_{\alpha_2}'=\left( 
\begin{array}{cc}
\mathcal{A}_{\alpha_2}' & \mathcal{X}_{\alpha_2} \\
\mathcal{Y}_{\alpha_2}'  & -\mathcal{A}_{\alpha_2}'
\end{array}%
\right).$$
Denote 
$\mathbb{J}_{\alpha_1}=\mathbb{J}_{\alpha_1}'=\left( 
\begin{array}{cc}
J_0 & 0 \\
0  & J_0
\end{array}%
\right).$
By Corollary \ref{BField2} and Proposition \ref{BFieldComplex} we get that there exist $B$-transformations $B_1\in\bigwedge^2\mathfrak{u}_{\alpha_1}^\ast$ and $B_2\in\bigwedge^2\mathfrak{u}_{\alpha_2}^\ast$   such that
$$e^{-B_1}\mathbb{J}_{\alpha_1} e^{B_1} =\mathbb{J}_{\alpha_1}'\qquad e^{-B_2}\mathbb{J}_{\alpha_2} e^{B_2} =\mathbb{J}_{\alpha_2}'.$$
Setting
$$\mathbb{J}=\left( 
\begin{array}{cccc}
J_0 & 0 & 0 & 0\\ 
0 &  \mathcal{A}_{\alpha_2} & 0 & \mathcal{X}_{\alpha_2}\\
0 & 0 & J_0 & 0\\
0 & \mathcal{Y}_{\alpha_2} & 0 & -\mathcal{A}_{\alpha_2}
\end{array}%
\right),$$
it is easy to check that for $B=\left( 
\begin{array}{cc}
B_1 & 0 \\
0  & B_2
\end{array}%
\right)$ we obtain
$$e^{-B}\mathbb{J}e^{B}={\tiny\left( 
\begin{array}{cccc}
1 & 0 & 0 & 0\\ 
0 & 1 & 0 & 0\\
-B_1 & 0 & 1 & 0\\
0 & -B_2 & 0 & 1
\end{array}%
\right)\left( 
\begin{array}{cccc}
J_0 & 0 & 0 & 0\\ 
0 &  \mathcal{A}_{\alpha_2} & 0 & \mathcal{X}_{\alpha_2}\\
0 & 0 & J_0 & 0\\
0 & \mathcal{Y}_{\alpha_2} & 0 & -\mathcal{A}_{\alpha_2}
\end{array}%
\right)\left( 
\begin{array}{cccc}
1 & 0 & 0 & 0\\ 
0 & 1 & 0 & 0\\
B_1 & 0 & 1 & 0\\
0 & B_2 & 0 & 1
\end{array}%
\right)}=\mathbb{J}'.$$
When $\mathbb{J}_{\alpha_1}=\mathbb{J}_{\alpha_1}'=-\mathcal{J}_0$ the reasoning is analogous.

Summing up, by an inductive process we have
\begin{theorem}\label{GlobalBField}
Let $\mathbb{J}$ and $\mathbb{J}'$ be two invariant generalized almost complex structures on $\mathbb{F}$ such that for each $\alpha\in \Pi^+$ the following conditions hold
\begin{enumerate}
\item[$\iota$.] if $\mathbb{J}_\alpha$ is of complex type, then $\mathbb{J}_\alpha=\mathbb{J}_\alpha'$, and
\item[$\iota\iota$.] if $\mathbb{J}_\alpha$ is of noncomplex type, then $\mathbb{J}_\alpha'$ is also of noncomplex type with
$$\mathcal{J}_{\alpha}=\left( 
\begin{array}{cc}
\mathcal{A}_{\alpha} & \mathcal{X}_{\alpha} \\
\mathcal{Y}_{\alpha}  & -\mathcal{A}_{\alpha}
\end{array}%
\right) \quad \textnormal{and}\quad \mathcal{J}_{\alpha}'=\left( 
\begin{array}{cc}
\mathcal{A}_{\alpha}' & \mathcal{X}_{\alpha} \\
\mathcal{Y}_{\alpha}'  & -\mathcal{A}_{\alpha}'
\end{array}%
\right).$$
\end{enumerate}
Then there exists an invariant $B$-transformation $B\in \wedge^2 (\mathfrak{n}^-)^\ast$ such that
$$e^{-B}\mathbb{J}e^{B}=\mathbb{J}'.$$
\end{theorem}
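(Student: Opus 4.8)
The plan is to reduce the statement to the per-root-space results already established, exploiting the block decomposition $\mathfrak{n}^-\oplus(\mathfrak{n}^-)^\ast=\bigoplus_{\alpha\in\Pi^+}(\mathfrak{u}_\alpha\oplus\mathfrak{u}_\alpha^\ast)$. By invariance and Remark \ref{canonical}, both $\mathbb{J}=\bigoplus_{\alpha\in\Pi^+}\mathbb{J}_\alpha$ and $\mathbb{J}'=\bigoplus_{\alpha\in\Pi^+}\mathbb{J}'_\alpha$ preserve each summand $\mathfrak{u}_\alpha\oplus\mathfrak{u}_\alpha^\ast$. I will look for $B$ in the block-diagonal form $B=\bigoplus_{\alpha\in\Pi^+}B_\alpha$ with $B_\alpha=-b_\alpha\,S_\alpha^\ast\wedge A_\alpha^\ast\in\wedge^2\mathfrak{u}_\alpha^\ast$; any such $B$ is automatically an invariant element of $\wedge^2(\mathfrak{n}^-)^\ast$, being a sum of invariant blocks. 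For a block-diagonal $B$ one has $e^{\pm B}=\bigoplus_{\alpha}e^{\pm B_\alpha}$, where $e^{\pm B_\alpha}$ acts as the identity on every summand other than the $\alpha$-th; since moreover $i_XB_\alpha=0$ whenever $X\in\mathfrak{u}_\beta$ with $\beta\neq\alpha$, the conjugation is computed blockwise, $e^{-B}\mathbb{J}e^{B}=\bigoplus_{\alpha\in\Pi^+}e^{-B_\alpha}\mathbb{J}_\alpha e^{B_\alpha}$. Hence it is enough to choose, for each $\alpha\in\Pi^+$, a form $B_\alpha\in\wedge^2\mathfrak{u}_\alpha^\ast$ with $e^{-B_\alpha}\mathbb{J}_\alpha e^{B_\alpha}=\mathbb{J}'_\alpha$.

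Next I would split according to the type of $\mathbb{J}_\alpha$. If $\mathbb{J}_\alpha$ is of complex type, then hypothesis $(\iota)$ gives $\mathbb{J}'_\alpha=\mathbb{J}_\alpha$, and by Proposition \ref{BFieldComplex} every $B$-transformation fixes a complex-type structure on $\mathfrak{u}_\alpha$, so $B_\alpha=0$ works. If $\mathbb{J}_\alpha$ is of noncomplex type, then hypothesis $(\iota\iota)$ says $\mathbb{J}'_\alpha$ is of noncomplex type with the same off-diagonal block $\mathcal{X}_\alpha$, i.e.\ the same $x_\alpha$; this is exactly the setting of Corollary \ref{BField2}, which produces the required $B_\alpha\in\wedge^2\mathfrak{u}_\alpha^\ast$. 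Assembling, $B=\bigoplus_{\alpha\in\Pi^+}B_\alpha$ satisfies $e^{-B}\mathbb{J}e^{B}=\bigoplus_{\alpha\in\Pi^+}e^{-B_\alpha}\mathbb{J}_\alpha e^{B_\alpha}=\bigoplus_{\alpha\in\Pi^+}\mathbb{J}'_\alpha=\mathbb{J}'$, which is the claim. The explicit $|\Pi^+|=2$ computations preceding the statement are nothing but the $d=2$ case of this argument; each root space being treated independently, no genuine induction is involved.

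The step that deserves care — and essentially the only place an error could enter — is the blockwise evaluation of $e^{-B}\mathbb{J}e^{B}$: one must verify that a block-diagonal $B$ yields no terms mixing distinct root spaces. This follows at once from the fact that $e^{\pm B_\alpha}$ and $\mathbb{J}_\alpha$ act as the identity outside $\mathfrak{u}_\alpha\oplus\mathfrak{u}_\alpha^\ast$ (equivalently $i_XB_\alpha=0$ and $B_\alpha(\cdot,X)=0$ for $X\in\mathfrak{u}_\beta$, $\beta\neq\alpha$), so the big matrix products decouple into the per-root-space identities of Lemma \ref{Bsymp}, Corollary \ref{BField2}, and Proposition \ref{BFieldComplex}. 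Once this is recorded, there is no further obstacle, all the substantive work having been done at the level of a single $\mathfrak{u}_\alpha$.
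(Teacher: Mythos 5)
Your proposal is correct and follows essentially the same route as the paper: choose per-root $B_\alpha$ via Corollary \ref{BField2} (noncomplex blocks) and Proposition \ref{BFieldComplex} (complex blocks, where $B_\alpha=0$ suffices), assemble them into a block-diagonal invariant $B\in\bigwedge^2(\mathfrak{n}^-)^\ast$, and observe that conjugation by $e^{B}$ decouples along the root-space decomposition. Your explicit remark that the blockwise evaluation of $e^{-B}\mathbb{J}e^{B}$ produces no cross-terms is exactly what the paper's matrix computation verifies, so nothing is missing.
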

\begin{proof}
Suppose that $\Pi^+=\{\alpha_1,\cdots,\alpha_d \}$. For each $\alpha_j\in \Pi^+$ with $j=1,2,\cdots,d$ we set 
$$\mathbb{J}_{\alpha_j}=\left( 
\begin{array}{cc}
\mathcal{A}_{\alpha_j} & \mathcal{X}_{\alpha_j} \\
\mathcal{Y}_{\alpha_j}  & -\mathcal{A}_{\alpha_j}
\end{array}%
\right)  \quad \textnormal{and}\quad  \mathbb{J}_{\alpha_j}'=\left( 
\begin{array}{cc}
\mathcal{A}_{\alpha_j}' & \mathcal{X}_{\alpha_j} \\
\mathcal{Y}_{\alpha_j}'  & -\mathcal{A}_{\alpha_j}'
\end{array}%
\right).$$
where by hypotheses if $\mathbb{J}_{\alpha_j}$ is the complex type, then $\mathbb{J}_{\alpha_j}=\mathbb{J}_{\alpha_j}'=\pm \mathcal{J}_0$ which implies that $\mathcal{A}_{\alpha_j}=\mathcal{A}_{\alpha_j}'=-\mathcal{A}_{\alpha_j}=-\mathcal{A}_{\alpha_j}'=\pm J_0$ and $\mathcal{X}_{\alpha_j}=\mathcal{Y}_{\alpha_j}'=\mathcal{Y}_{\alpha_j}=0$. Otherwise, if $\mathbb{J}_{\alpha_j}$ is of noncomplex type, then $\mathbb{J}_{\alpha_j}'$ is also of noncomplex type.

By Corollary \ref{BField2} and Proposition \ref{BFieldComplex}, for all $j=1,2,\cdots,d$ there exist $B$-transformations $B_j\in\bigwedge^2\mathfrak{u}_{\alpha_j}^\ast$ such that
\begin{equation}\label{tired}
e^{-B_j}\mathbb{J}_{\alpha_j} e^{B_j} =\mathbb{J}_{\alpha_j}'.
\end{equation}
If we define $B\in\bigwedge^2(\mathfrak{n}^-)^\ast$ by
$B={\tiny \left( 
\begin{array}{cccc}
B_{1} &  &  & \\ 
&  B_{2}&  & \\
&  & \ddots & \\
&  &  & B_{d}
\end{array}%
\right)}$, then setting
$$\mathbb{J}={\tiny\left( 
\begin{array}{ccccccccc}
\mathcal{A}_{\alpha_1} &  &  &  & \mathcal{X}_{\alpha_1} & & \\
 & \mathcal{A}_{\alpha_2} &  &  & &  \mathcal{X}_{\alpha_2}& \\
 
  &  & \ddots & & &   &\ddots \\
   &  &  & \mathcal{A}_{\alpha_d} & &  & & \mathcal{X}_{\alpha_d} \\
  \mathcal{Y}_{\alpha_1}   & &  &  & -\mathcal{A}_{\alpha_1} & &\\
    & \mathcal{Y}_{\alpha_2} &  &  &  & -\mathcal{A}_{\alpha_2} &\\
     &  & \ddots & & &   &\ddots\\
     &  &  &\mathcal{Y}_{\alpha_d} & &   & & -\mathcal{A}_{\alpha_d}\\
     
\end{array}%
\right)},$$
we get that the identities deduced from \eqref{tired} imply that $e^{-B}\mathbb{J}e^{B}=\mathbb{J}'$.
\end{proof}

\begin{remark}
In Theorem \eqref{GlobalBField} we have that $B$ is initially defined on $T_{b_0}\mathbb{F}=\mathfrak{n}^-$. Thus we can use the adjoint action on $\mathbb{F}$ to define an invariant 2-form $\widetilde{B}$ on $\mathbb{F}$ as follows. If $x=gT\in \mathbb{F}=U/T$ for some $g\in U$ we define
$$\widetilde{B}_x(X(x),Y(x))=B(\Ad(g^{-1})_{\ast,x}X(x),\Ad(g^{-1})_{\ast,x}Y(x)),\qquad X(x), Y(x)\in T_x\mathbb{F}.$$ 
It is simple to check using the rule chain that $\widetilde{B}\in\Omega^2(\mathbb{F})$ is invariant by the adjoint action in the sense that $(\Ad(g))^\ast \widetilde{B}=\widetilde{B}$ for all $g\in U$. Analogously, we can define $\mathbb{J}$ and $\mathbb{J}'$ on $\mathbb{TF}$ such that they are invariant by $(\Ad\oplus \Ad^\ast)(g)$ for all $g\in T$ and satisfy $e^{-\widetilde{B}}\mathbb{J}e^{\widetilde{B}}=\mathbb{J}'$.
\end{remark}

We denote by $\mathfrak{M}_a(\mathbb{F})$ the quotient space obtained from $\mathcal{M}_a$ module 
the action by $B$-transformations (see Definition \ref{Bmoduli}). We also identify $\pm \mathcal{J}_0$ with the two points $\pm \mathbf{0}$.
\begin{corollary}\label{Bmoduli1}
Suppose that $\vert \Pi^+ \vert=d$. Then
$$\mathfrak M_a (\mathbb F)= \prod_{\alpha\in\Pi^+} \mathfrak{M}_\alpha(\mathbb{F})=(\mathbb R^\ast \cup \pm \mathbf{0})_{\alpha_1} \times \cdots \times (\mathbb  R^\ast\cup\pm \mathbf{0})_{\alpha_d},$$
where $\alpha_j\in\Pi^+$.  Moreover, in the product topology, $\mathfrak{M}_a(\mathbb{F})$ contains an open dense subset $ (\mathbb{R}^\ast)^d$ parametrizing all invariant generalized almost complex structures of symplectic type and there are exactly $2^d$  isolated points corresponding to the structures of complex type.
\end{corollary}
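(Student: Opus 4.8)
The plan is to reduce the whole statement to the one–root case, which is already settled by Proposition~\ref{AmoduloB}, and then assemble the factors. The two ingredients I would isolate first are ``block–diagonalisation'' facts coming from the $T$–action. By Remark~\ref{canonical} every $\mathbb J\in\mathcal M_a$ is block diagonal for the splitting $\mathfrak n^-\oplus(\mathfrak n^-)^\ast=\bigoplus_{\alpha\in\Pi^+}(\mathfrak u_\alpha\oplus\mathfrak u_\alpha^\ast)$, so $\mathbb J=\bigoplus_\alpha\mathbb J_\alpha$ with $\mathbb J_\alpha\in\mathcal M_\alpha$, and conversely any such family assembles into an element of $\mathcal M_a$; hence $\mathcal M_a=\prod_{\alpha\in\Pi^+}\mathcal M_\alpha$ as a set. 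The same applies to invariant $2$–forms: an invariant $B\in\wedge^2(\mathfrak n^-)^\ast$ can have no component in $\mathfrak u_\alpha^\ast\wedge\mathfrak u_\beta^\ast$ for $\alpha\neq\beta$, since after complexifying this $T$–module has weights $\pm\alpha\pm\beta\neq 0$, whereas $\wedge^2\mathfrak u_\alpha^\ast$ is the one–dimensional line $\mathbb R\{S_\alpha^\ast\wedge A_\alpha^\ast\}$ on which $T$ acts trivially. Thus $B=\bigoplus_\alpha B_\alpha$ with $B_\alpha\in\wedge^2\mathfrak u_\alpha^\ast$, and the group $\mathcal B$ of invariant $B$–transformations factors as $\prod_{\alpha\in\Pi^+}\mathcal B_\alpha$, each $\mathcal B_\alpha$ acting on the corresponding $\mathcal M_\alpha$.

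Next I would check that the $\mathcal B$–action respects these products. Since $\mathbb J$ and $B$ are both block diagonal, $e^{-B}\mathbb Je^{B}=\bigoplus_\alpha e^{-B_\alpha}\mathbb J_\alpha e^{B_\alpha}$ by a direct block–matrix computation — this is exactly the computation carried out in the proof of Theorem~\ref{GlobalBField}. Reading off blocks, $\mathbb J$ and $\mathbb J'$ lie in the same $\mathcal B$–orbit if and only if $\mathbb J_\alpha$ and $\mathbb J'_\alpha$ lie in the same $\mathcal B_\alpha$–orbit for every $\alpha\in\Pi^+$: the forward implication is the block computation applied to the (automatically block diagonal) global $B$, and the converse is precisely Theorem~\ref{GlobalBField}. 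Passing to quotients yields
$$\mathfrak M_a(\mathbb F)=\frac{\mathcal M_a}{\mathcal B}=\prod_{\alpha\in\Pi^+}\frac{\mathcal M_\alpha}{\mathcal B_\alpha}=\prod_{\alpha\in\Pi^+}\mathfrak M_\alpha(\mathbb F),$$
and substituting Proposition~\ref{AmoduloB}, which identifies each factor with $\mathbb R^\ast\cup\{\pm\mathbf 0\}$ — the punctured line recording the symplectic–type structures together with the two points corresponding to $\pm\mathcal J_0$ — gives the displayed formula for $\mathfrak M_a(\mathbb F)$.

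For the topological assertions I would equip $\mathfrak M_a(\mathbb F)$ with the product topology. By Remark~\ref{topology1} each $\pm\mathbf 0$ is isolated in its factor, so $(\mathbb R^\ast)^d$, a product of open sets, is open; and by the identification above it is exactly the set of classes of structures of symplectic type, i.e.\ of type $k=\sum_\alpha k_\alpha=0$ (each complex factor contributes $k_\alpha=1$, since $\dim_{\mathbb R}\mathfrak u_\alpha=2$). Dually, the $2^d$ tuples $(\epsilon_1\mathbf 0,\dots,\epsilon_d\mathbf 0)$ with $\epsilon_j\in\{+,-\}$ — the classes of structures of complex type, $k=d$ — are products of isolated points, hence isolated, and $(\mathbb R^\ast)^d$ is the unique stratum of maximal dimension $d$; the remaining topological claims follow by inspection of the product.

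I expect the genuine content to be concentrated in the second step, namely the fact that factorwise $B$–equivalence is always realised by a single invariant $B$–field on all of $\mathfrak n^-$ at once. This is Theorem~\ref{GlobalBField}, itself obtained from Corollary~\ref{BField2} and Proposition~\ref{BFieldComplex} applied root by root together with the block computation above; everything else — the product decompositions of $\mathcal M_a$ and of the invariant $B$–fields, and the topology — is essentially bookkeeping.
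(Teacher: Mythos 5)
Your argument is correct and follows essentially the same route as the paper, which derives this corollary by combining Theorem \ref{GlobalBField} (two invariant structures are $\mathcal{B}$-equivalent if and only if they are equivalent root by root under a block-diagonal invariant $B$), Proposition \ref{AmoduloB} (identifying each factor $\mathfrak{M}_\alpha(\mathbb{F})$ with $\mathbb{R}^\ast\cup\{\pm\mathbf{0}\}$), and Remark \ref{topology1} (isolatedness of $\pm\mathbf{0}$). Your weight-theoretic justification that an invariant $2$-form necessarily splits as $\bigoplus_{\alpha}B_\alpha$ with $B_\alpha\in\wedge^2\mathfrak{u}_\alpha^\ast$, so that the group of invariant $B$-transformations itself factors as $\prod_\alpha\mathcal{B}_\alpha$, is a detail the paper uses implicitly but does not spell out.
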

\begin{remark}
	If $\mathbb{J}$ is an invariant generalized almost complex structure on a maximal flag $\mathbb{F}$ then it is regular in the sense that its type at every point of $\mathbb{F}$ is the same that the type at the origin $b_0$ of $\mathbb{F}$ because of  invariance.
\end{remark}

\subsection{$\Theta$-Stratification} 
Let $\Sigma$ be a simple root system for $\mathfrak g$. 
We now  explain how subsets $\Theta \subseteq \Sigma$ determine 
families of invariant generalized complex structures in $\mathfrak M (\mathbb F)$.

\begin{notation}\label{VecNotation}
	 As shown in Theorem \ref{GlobalBField}, every class $\sigma$ in $\mathfrak M_a (\mathbb F)$ 
	 may be represented by a vector of the form $\sigma=(\sigma_1,\sigma_2,\cdots,\sigma_d)$. If $\sigma$ is 
	  of type $k$ then there exists a $k$-multi-index $I \subset \lbrace 1,2,\cdots,d\rbrace$
	   such that  $\sigma_i \in \lbrace \mathbf{0},-\mathbf{0} \rbrace$ for all $i\in I$ and 
	  $\sigma_i \in \mathbb{R}^\ast$ for  $i \notin  I$ ($d-k$ elements satisfies this).
\end{notation}

Consider   $\Theta \subseteq \Sigma$ containing $r=\vert \Theta \vert$ elements.
Let  $\mathbb{J}$ be an invariant generalized complex structure on $\mathbb{F}$ that  is of noncomplex type for each $\alpha \in \langle \Theta \rangle^+$ and of complex type for each $\alpha \in \Pi^+ \backslash \langle \Theta \rangle^+$ as in  Theorem \ref{theta2}.
 Then $\mathbb{J}$ is of type  $k=d-\vert\langle\Theta \rangle^+\vert$.
 Another invariant generalized complex structure on $\mathbb{F}$ which is equivalent to $\mathbb{J}$, that is, which belongs
  to the same $B$-field transformation orbit,  has the same type as $\mathbb{J}$, see Remark \ref{int}.
   Therefore, the type $k$ is well defined for classes in $\mathfrak M_a (\mathbb F)$ and for classes in $\mathfrak M (\mathbb F)$.

Let $\sigma$ be the vector  that represents $\mathbb{J}$. We wish to consider the subset of $\mathfrak M (\mathbb F)$ formed 
by all classes of type $k$ which have the same $\sigma_i$ entries for all $i\in I$.
 Since every element in $\langle \Theta \rangle^+$ is the sum of elements in $\Theta$, second equation of \eqref{Integrabilityconditions} implies that the number of independent entries  $\lbrace \sigma_i :\ i\notin I\rbrace$ is $r$, and these are free to vary inside $(\mathbb R^\ast)^r$, thus forming 
 $r$-dimensional families of invariant generalized complex on $\mathbb{F}$ of type $k$. 

\begin{lemma}\label{ThetaCells}
Every subset $\Theta \subseteq \Sigma$ with $r=\vert \Theta \vert$ determines an $r$-dimensional family inside $\mathfrak M (\mathbb F)$ of invariant generalized complex structures on $\mathbb{F}$ of type $k=d-\vert\langle\Theta \rangle^+\vert$.
\end{lemma}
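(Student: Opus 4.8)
The plan is to make precise the counting argument sketched in the paragraph preceding the statement, organizing it around the two results already imported from \cite{VS}, namely Theorem \ref{theta2} (existence of a structure of the prescribed $\langle\Theta\rangle^+$-pattern) and the integrability conditions \eqref{Integrabilityconditions}. First I would fix, once and for all, a structure $\mathbb{J}$ as in Theorem \ref{theta2}: noncomplex type on every $\alpha\in\langle\Theta\rangle^+$ and complex type (say $\mathcal{J}_0$, using Remark \ref{x>0}) on every $\alpha\in\Pi^+\setminus\langle\Theta\rangle^+$. As observed in the text, $\mathbb{J}$ then has type $k=d-\vert\langle\Theta\rangle^+\vert$, and by Remark \ref{int} this type is constant along the $B$-field orbit, hence is a well-defined invariant of the corresponding class in $\mathfrak{M}(\mathbb{F})$. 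So the family I want to build is the set of classes $\sigma=(\sigma_1,\dots,\sigma_d)\in\mathfrak{M}(\mathbb{F})$ whose $i$-th entry is the \emph{same} complex-type point $\sigma_i\in\{\mathbf{0},-\mathbf{0}\}$ as for $\mathbb{J}$ whenever $i\in I$ (i.e.\ $\alpha_i\notin\langle\Theta\rangle^+$), with the remaining entries $\{\sigma_i:\alpha_i\in\langle\Theta\rangle^+\}$ ranging over the symplectic-type parameters in $\mathbb{R}^\ast$ subject to integrability.

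The heart of the argument is the dimension count. By Corollary \ref{Bmoduli1}, modulo $B$-transformations the noncomplex entries of a class are recorded by the parameters $x_\alpha\in\mathbb{R}^\ast$, $\alpha\in\langle\Theta\rangle^+$, so a priori the family lives in $(\mathbb{R}^\ast)^{\vert\langle\Theta\rangle^+\vert}$. The point is that these parameters are not free: the structure must remain integrable, and by the discussion after Table \ref{integrab} integrability is controlled triple-by-triple on $(\mathbb{J}_\alpha,\mathbb{J}_\beta,\mathbb{J}_{\alpha+\beta})$ for positive roots with $\alpha+\beta$ a root. When all three members are of noncomplex type the binding constraint is the second equation of \eqref{Integrabilityconditions}, which after dividing by $x_\alpha x_\beta x_{\alpha+\beta}$ reads $\tfrac{1}{x_{\alpha+\beta}}=\tfrac{1}{x_\alpha}+\tfrac{1}{x_\beta}$; the first equation of \eqref{Integrabilityconditions} is then automatically consistent once we are on a $B$-orbit (indeed by Proposition \ref{BFieldSymplectic} every noncomplex structure is the $B$-transform of a symplectic one, so we may normalize $a_\alpha=0$ within each class). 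Setting $t_\alpha\ce 1/x_\alpha$, the constraints become the linear system $t_{\alpha+\beta}=t_\alpha+t_\beta$ for all such triples inside $\langle\Theta\rangle^+$. Since $\langle\Theta\rangle$ is by definition the closure of $\Theta$ under root addition, every $\gamma\in\langle\Theta\rangle^+$ is an $\mathbb{N}$-combination $\gamma=\sum_{\alpha\in\Theta}n_\alpha(\gamma)\,\alpha$, and the relation $t$ is additive forces $t_\gamma=\sum_{\alpha\in\Theta}n_\alpha(\gamma)\,t_\alpha$; conversely any assignment of the $r=\vert\Theta\vert$ values $(t_\alpha)_{\alpha\in\Theta}$ extends consistently by this formula. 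Hence the solution set is an $r$-dimensional linear subspace, and intersecting with the open condition $x_\gamma\neq 0$ for all $\gamma$ (equivalently all the $t_\gamma$ finite and nonzero) cuts out an open dense subset of $(\mathbb{R}^\ast)^r$, parametrized freely by $(x_\alpha)_{\alpha\in\Theta}\in(\mathbb{R}^\ast)^r$ up to the finitely many hyperplanes where some $t_\gamma$ vanishes. This exhibits the desired $r$-dimensional family; each such choice yields, by Theorem \ref{GlobalBField}, a genuine class in $\mathfrak{M}(\mathbb{F})$ distinct from the others since the $x_\gamma$ are $B$-invariant (Remark \ref{topology1}).

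The main obstacle I anticipate is bookkeeping rather than conceptual: one must be careful that the triple-wise integrability conditions involving a \emph{mix} of complex-type and noncomplex-type members (the middle rows of Table \ref{integrab}) impose no \emph{further} restriction on the $x_\gamma$'s. This is where the hypothesis that $\mathbb{J}$ has exactly the $\langle\Theta\rangle^+$-pattern of Theorem \ref{theta2} is used: $\langle\Theta\rangle^+$ is closed under addition, so a triple $(\alpha,\beta,\alpha+\beta)$ with $\alpha,\beta\in\langle\Theta\rangle^+$ has $\alpha+\beta\in\langle\Theta\rangle^+$ as well, and a triple with at least one of $\alpha,\beta$ outside $\langle\Theta\rangle^+$ never lands in the ``all noncomplex'' row; one checks against Table \ref{integrab} that every such mixed triple is already one of the admissible rows with no side conditions, so no constraint on the parameters appears. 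Granting this, the lemma follows. I would also remark that the coordinates $(x_\alpha)_{\alpha\in\Theta}$ give the family the structure of (an open dense subset of) $(\mathbb{R}^\ast)^r$, which is the ``cell'' ${\bf c}^r$ referred to in the introduction.
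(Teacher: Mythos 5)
Your proposal is correct and follows essentially the same route as the paper, which establishes the lemma via the paragraph preceding its statement: fix $\mathbb{J}$ as in Theorem \ref{theta2}, note the type is a $B$-orbit invariant, and use the second equation of \eqref{Integrabilityconditions} to conclude that the entries $x_\gamma$, $\gamma\in\langle\Theta\rangle^+$, are determined by the $r$ values $x_\alpha$, $\alpha\in\Theta$. Your write-up merely makes the paper's sketch explicit (the linear system $t_{\alpha+\beta}=t_\alpha+t_\beta$ in reciprocal coordinates, uniqueness of the expression of $\gamma$ as an $\mathbb{N}$-combination of simple roots, the absence of constraints from mixed triples, and the normalization $a_\alpha=0$ within a $B$-orbit), all of which is consistent with the paper's argument.
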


We will call to these $r$-dimensional families $\Theta$-cells. In particular, if $\Theta=\emptyset$ we get points which represent invariant generalized complex structures of complex type and if $\Theta=\Sigma$ with $r=\vert \Sigma \vert$ we get an $r$-dimensional family of invariant generalized complex structures of symplectic type.

\begin{notation}\label{CellsNotation}
Fix a choice of positive roots $\Pi^+$ with corresponding simple root system $\Sigma$ 
and consider all values of $i$ with  $1\leq i\leq d$. As consequence of Theorems \ref{theta} and \ref{theta2},
there are   {\it standard} cells of dimension $i$ obtained by 
a choice $\Theta \subseteq \Sigma$ of simple positive roots, which we denote  by
 ${\bf c}^i$, so that, all  $\Theta$-cells of dimension $i$ will be obtained from the standard ones via the action of the Weyl group. 
If there are no 
$\Theta$-cells of a given dimension $i$, the corresponding choice of ${\bf c}^i$ is $\emptyset$. For each element $w$ in the Weyl group $\mathcal{W}$, we denote by $w\cdot {\bf c}^i$ the $\Theta$-cell corresponding to $w \cdot \Theta \subset w \cdot \Sigma$.
\end{notation}

It is well known that the Weyl group $\mathcal{W}$ acts transitively on Weyl chambers and hence on the set of simple roots systems of $\mathfrak{g}$. Thus, as consequence of Lemma \ref{ThetaCells} and with Notation \ref{CellsNotation} we may give a decomposition of $\mathfrak M (\mathbb F)$ in $\Theta$-cells as follows:
\begin{theorem}\label{IntModuli}
The moduli space of invariant generalized complex structures on $\mathbb F$ admits the following  cell decomposition:
	$$\mathfrak M (\mathbb F)=\bigsqcup_{i=1}^{d} \bigsqcup_{w\in \mathcal W} w\cdot {\bf c}^i.$$
\end{theorem}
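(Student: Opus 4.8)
The goal is to express $\mathfrak M(\mathbb F)$ as a disjoint union of $\Theta$-cells indexed by pairs $(i,w)$ with $1\le i\le d$ and $w\in\mathcal W$. The proof has two halves: (1) every class in $\mathfrak M(\mathbb F)$ lies in some $\Theta$-cell, namely $w\cdot\mathbf c^i$ for suitable $i$ and $w$; and (2) the cells $w\cdot\mathbf c^i$ are pairwise disjoint as $(i,w)$ ranges over its index set. The first half is essentially a repackaging of Theorem \ref{theta} together with the Weyl-transitivity on simple root systems; the second half is where the genuine bookkeeping lives.

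\emph{Surjectivity (covering).} Let $[\mathbb J]\in\mathfrak M(\mathbb F)$ be represented by an invariant generalized complex structure $\mathbb J$. By Remark \ref{x>0} (which rests on Theorem \ref{theta} and Proposition \ref{WeylIntegrability}), after acting by a suitable $w_0\in\mathcal W$ we may assume that $\mathbb J_\alpha$ of noncomplex type has $x_\alpha>0$ and $\mathbb J_\alpha$ of complex type equals $\mathcal J_0$; then Theorem \ref{theta} supplies a subset $\Theta\subseteq\Sigma_J$ such that $\mathbb J_\alpha$ is of noncomplex type exactly for $\alpha\in\langle\Theta\rangle^+$ and of complex type for $\alpha\in\Pi^+\setminus\langle\Theta\rangle^+$. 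Because $\mathcal W$ acts transitively on the simple root systems of $\mathfrak g$, choose $w_1\in\mathcal W$ with $w_1\cdot\Sigma=\Sigma_J$, and set $\Theta' = w_1^{-1}\cdot\Theta\subseteq\Sigma$; then $\Theta$ is a subset of the \emph{standard} $\Sigma$ transported by $w_1$. By Lemma \ref{ThetaCells} the class $[\mathbb J]$ then lies in the $\Theta$-cell determined by $w_1\cdot\Theta'\subseteq w_1\cdot\Sigma$, i.e.\ in $w_1\cdot\mathbf c^i$ with $i=|\Theta'|$ (using Notation \ref{CellsNotation}, where $\mathbf c^i$ is the standard cell for a chosen $|\Theta'|$-element subset of $\Sigma$ of that dimension; if the chosen standard $\mathbf c^i$ corresponds to a different subset of the same cardinality, a further Weyl element identifies the two). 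This shows $\mathfrak M(\mathbb F)=\bigcup_{i,w} w\cdot\mathbf c^i$, and the union runs only over $1\le i\le d$ since $\dim\mathbf c^i=|\Theta'|\le|\Sigma|=d$.

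\emph{Disjointness.} Suppose a class $[\mathbb J]$ lies in both $w\cdot\mathbf c^i$ and $w'\cdot\mathbf c^j$. First, the type $k$ of $[\mathbb J]$ is a $B$-transformation invariant (Remark \ref{int}), and $w\cdot\mathbf c^i$ consists of classes of type $k=d-|\langle\Theta\rangle^+|$ where $\Theta=w\cdot\Theta_0$ for the subset $\Theta_0\subseteq\Sigma$ defining $\mathbf c^i$; since $|w\cdot\Theta_0|=|\Theta_0|=i$, the dimension $i$ is recovered from $[\mathbb J]$ as soon as we know $\Theta$. So it suffices to show that the \emph{unordered} data ``which positive roots are of noncomplex type'' determines $\Theta$ up to the claim, and that this set of noncomplex roots is itself a $B$-invariant of the class (immediate from Remark \ref{int}, since noncomplex vs.\ complex type per root space is the type $k=0$ vs.\ $k=1$ on $\mathfrak u_\alpha$). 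Thus $[\mathbb J]\in w\cdot\mathbf c^i\cap w'\cdot\mathbf c^j$ forces $\langle w\cdot\Theta_0\rangle^+=\langle w'\cdot\Theta_0'\rangle^+$ as subsets of $\Pi^+$. It remains to argue that the set of positive roots $\langle\Theta\rangle^+$ spanned by a subset $\Theta$ of \emph{some} simple system determines $\Theta$: indeed $\Theta$ is recovered as the set of indecomposable elements of $\langle\Theta\rangle^+$ with respect to the simple system $w\cdot\Sigma$ in which $\langle\Theta\rangle^+$ consists of positive roots, and — this is the delicate point — one shows that a closed-under-addition subset of positive roots of the form $\langle\Theta\rangle^+$ is, together with its negatives, a parabolic-type root subsystem whose simple roots are intrinsically defined; hence $w\cdot\Theta_0$ and $w'\cdot\Theta_0'$ coincide, whence $i=j$, and the labelling by $w$ (modulo the stabilizer of $\mathbf c^i$ inside $\mathcal W$, which we fold into the indexing as in Notation \ref{CellsNotation}) agrees. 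Conversely distinct standard cells $\mathbf c^i,\mathbf c^j$ with $i\ne j$ are disjoint by the type argument, and for fixed $i$ two cells $w\cdot\mathbf c^i$, $w'\cdot\mathbf c^i$ either coincide or are disjoint by the same indecomposability argument. Assembling the two halves gives $\mathfrak M(\mathbb F)=\bigsqcup_{i=1}^d\bigsqcup_{w\in\mathcal W}w\cdot\mathbf c^i$.

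\emph{Main obstacle.} The routine parts are the covering statement and the type computation; the real content is the disjointness, specifically the claim that $\langle\Theta\rangle^+$ (as an abstract subset of $\Pi^+$, without remembering $\Theta$ or even which Weyl chamber we started in) remembers $\Theta$. This is a purely root-theoretic fact — that the positive roots closed under addition generated by a subset of simple roots form a ``$\Theta$-system'' from which $\Theta$ is recoverable as its set of simple roots — and one must be careful that different pairs $(\Theta_0,w)$ can produce the same subset of $\Pi^+$, which is exactly why the indexing in Notation \ref{CellsNotation} is set up to absorb the stabilizer. I would isolate this as a small root-system lemma and keep the rest of the argument at the level of the summary above.
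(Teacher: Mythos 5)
Your covering argument is exactly the paper's (very terse) proof: Theorem \ref{theta} plus transitivity of $\mathcal W$ on simple systems plus Lemma \ref{ThetaCells}; the paper offers nothing at all on disjointness, so that half of your write-up is your own addition. The genuine problem is the parenthetical claim in your surjectivity step: ``if the chosen standard $\mathbf c^i$ corresponds to a different subset of the same cardinality, a further Weyl element identifies the two.'' This is false. Two subsets of $\Sigma$ with the same number of elements need not be Weyl-conjugate: in $A_3$ with $\Sigma=\{\alpha_1,\alpha_2,\alpha_3\}$, the subset $\{\alpha_1,\alpha_2\}$ generates an $A_2$ subsystem with $\vert\langle\Theta\rangle^+\vert=3$, while $\{\alpha_1,\alpha_3\}$ generates $A_1\times A_1$ with $\vert\langle\Theta\rangle^+\vert=2$; no element of $\mathcal W$ can carry one closed subsystem to the other, and indeed the two $2$-dimensional cells they define have different types ($k=3$ versus $k=4$). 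Consequently, if $\mathbf c^i$ denotes a \emph{single} standard cell per dimension, the orbit $\bigcup_{w}w\cdot\mathbf c^i$ misses entire families and the displayed formula does not cover $\mathfrak M(\mathbb F)$. The statement is only correct under the reading (suggested by the plural ``standard cells'' in Notation \ref{CellsNotation}) that $\mathbf c^i$ ranges over all cells attached to subsets $\Theta\subseteq\Sigma$ with $\vert\Theta\vert=i$, i.e.\ the index set is really $\{(\Theta,w):\Theta\subseteq\Sigma,\ w\in\mathcal W\}$. Your proof should adopt that reading explicitly rather than patch it with a nonexistent Weyl element.

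On disjointness, your instinct to reduce to the root-theoretic fact that $\langle\Theta\rangle^+$ determines $\Theta$ as its set of indecomposable elements is sound, and the observation that the set of noncomplex roots is a $B$-invariant of the class is correct. But note that literal disjointness of $\bigsqcup_{w\in\mathcal W}w\cdot\mathbf c^i$ fails even after fixing the reading above: for $\Theta=\Sigma$ one has $\langle w\cdot\Sigma\rangle^+=\Pi^+$ for every $w$, so every $w$ returns the same top-dimensional (symplectic-type) cell and the union over $w$ repeats it $\vert\mathcal W\vert$ times. You acknowledge this by invoking ``the stabilizer of $\mathbf c^i$ inside $\mathcal W$,'' but Notation \ref{CellsNotation} does not quotient by that stabilizer, so you cannot fold the problem into the indexing without saying so; the honest statement is a decomposition indexed by pairs $(\Theta,w)$ modulo the equivalence $\langle w\cdot\Theta\rangle^+=\langle w'\cdot\Theta'\rangle^+$ (together with matching signs on the complex-type entries). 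Since the paper itself proves none of this, your attempt is more careful than the original, but as written it asserts one false conjugacy claim and leaves the stabilizer issue as a gesture rather than an argument.
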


\subsection{Invariant pure spinor lines} 
We now  exhibit the
pure spinor lines associated to invariant generalized almost complex structures $\mathbb{J}$ on
$\mathbb{F}$. We will use the following facts:
\begin{remark}\cite{H},\cite{G1}
Suppose that $(M,\mathbb{J})$ and $(M',\mathbb{J}')$ are two generalized almost complex manifolds. Then $\mathbb{J}\oplus \mathbb{J}'$ is a generalized almost complex structure on $M\times M'$ which satisfies
\begin{enumerate}
\item[$\iota$.] if $\mathcal{L}\subset \mathbb{T}M\otimes \mathbb C$ and $\mathcal{L}'\subset \mathbb{T}M'\otimes \mathbb C$ are the maximal isotropic subbundles determined by $\mathbb{J}$ and $\mathbb{J}'$, respectively, then the corresponding maximal isotropic subbundle in $\mathbb{T}(M\times M')\otimes \mathbb{C}$ determined by $\mathbb{J}\oplus \mathbb{J}'$ is $\pi_1^\ast(\mathcal{L})\oplus \pi_2^\ast(\mathcal{L}')$.
\item[$\iota\iota$.] If $\varphi_\mathcal{L}$ and $\varphi'_\mathcal{L'}$ are pure spinors determined by $\mathbb{J}$ and $\mathbb{J}'$, respectively, then the pure spinor line determined by $\mathbb{J}\oplus \mathbb{J}'$ is generated by
$\pi_1^\ast(\varphi_\mathcal{L})\wedge \pi_2^\ast(\varphi_\mathcal{L}')$.
\item[$\iota\iota\iota$.] The type of $\mathbb{J}\oplus \mathbb{J}'$ is additive, that is, $\textnormal{type}(\mathbb{J}\oplus \mathbb{J}')_{(p,q)}=\textnormal{type}(\mathbb{J})_p+\textnormal{type}(\mathbb{J}')_q$ for all $(p,q)\in M\times M'$.
\end{enumerate}
It is simple to generalize this result to  the product of a finite number of generalized almost complex manifolds, using induction.
\end{remark}

An invariant generalized almost complex structure $\mathbb{J}:\mathfrak{n}^-\oplus(\mathfrak{n}^-)^\ast\to \mathfrak{n}^-\oplus(\mathfrak{n}^-)^\ast$ on $\mathbb{F}$ may be written as $$\mathbb{J}=\bigoplus_{\alpha\in\Pi^+}\mathbb{J}_\alpha,$$
where $\mathfrak{n}^-=\bigoplus_{\alpha\in\Pi^+}\mathfrak{u}_\alpha$ and $\mathbb{J}_\alpha:\mathfrak{u}_\alpha\oplus\mathfrak{u}^\ast_\alpha\to \mathfrak{u}_\alpha\oplus\mathfrak{u}^\ast_\alpha$ is described in Remark \ref{canonical}. The 
corresponding invariant maximal isotropic subspace of $(\mathfrak{n}^-\oplus(\mathfrak{n}^-)^\ast)\otimes \mathbb{C}$ is 
 $\mathcal{L}=\bigoplus_{\alpha\in\Pi^+}\mathcal{L}_\alpha$ where $\mathcal{L}_\alpha$ is determined by \eqref{isotropic1}.
Using Propositions \ref{BFieldSymplectic} and \ref{BFieldComplex} we have:

\begin{lemma}\label{SpinorRoots}
Let $\alpha$ be a positive root. 
\begin{enumerate}
\item[$\iota.$] If $\mathbb{J}_\alpha$ is of complex type with $\mathbb{J}_\alpha=\mathcal{J}_0$ then its pure spinor line  is generated by a 
$(1,0)$ form $\Omega_\alpha \in  \wedge^{1,0}\mathfrak{u}_\alpha^\ast$ which defines a complex structure on $\mathfrak{u}_\alpha$. Otherwise, when $\mathbb{J}_\alpha=-\mathcal{J}_0$  we have a pure spinor line generated by $\overline{\Omega_\alpha} \in  \wedge^{0,1}\mathfrak{u}_\alpha^\ast$.
\item[$\iota\iota$.] If $\mathbb{J}_\alpha$ is of noncomplex type, its pure spinor line  is generated by $$e^{B_\alpha+i\omega_\alpha},$$
where $B_\alpha=-\dfrac{a_\alpha}{x_\alpha}S_\alpha^\ast\wedge A_\alpha^\ast$ and $\omega_\alpha=-\dfrac{k_\alpha}{x_\alpha}\omega_{b_0}|_{\mathfrak{u}_\alpha}$ with $\omega_{b_0}$ denoting the KKS symplectic form on $\mathfrak{n}^-$ and $k_\alpha=\dfrac{1}{2i\langle H,H_\alpha\rangle}$.
\end{enumerate}
\end{lemma}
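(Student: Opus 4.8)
The plan is to treat the two cases separately, in each case applying Proposition \ref{Chevalley} (Chevalley's description of a pure spinor line) to the explicit maximal isotropic subspace $\mathcal{L}_\alpha$ recorded in \eqref{isotropic1}, together with the $B$-transformation normal forms established in Propositions \ref{BFieldSymplectic} and \ref{BFieldComplex}. For item $\iota.$, suppose $\mathbb{J}_\alpha = \mathcal{J}_0$. Then $\mathbb{J}_\alpha$ is literally of complex type in the sense of Example \ref{ExampleC}, with underlying ordinary complex structure $J_0$ on $\mathfrak{u}_\alpha$. By the discussion of that example, the pure spinor line of a complex-type structure on an $n$-dimensional space is generated by any nonzero $(n,0)$-form; here $\dim_{\mathbb{R}}\mathfrak{u}_\alpha = 2$, so the pure spinor line is generated by a nonzero $(1,0)$-form $\Omega_\alpha \in \wedge^{1,0}\mathfrak{u}_\alpha^\ast$, and conversely such an $\Omega_\alpha$ determines $J_0$ by declaring $\ker\Omega_\alpha = \mathfrak{u}_\alpha^{0,1}$. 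Concretely one checks from \eqref{isotropic1} that $\Delta_\alpha = \pi_\alpha(L_0^+) = \mathbb{C}\{A_\alpha - iS_\alpha\} = \mathfrak{u}_{\alpha,1,0}$ and $\Ann\Delta_\alpha = \mathbb{C}\{A_\alpha^\ast - iS_\alpha^\ast\}$, so $\Omega_\alpha$ can be taken to be $A_\alpha^\ast - iS_\alpha^\ast$ (up to scale), and since $\varepsilon = 0$ one takes $B = 0$ in Proposition \ref{Chevalley}. The case $\mathbb{J}_\alpha = -\mathcal{J}_0$ is identical after complex conjugation: $\Delta_\alpha = \mathbb{C}\{A_\alpha + iS_\alpha\} = \mathfrak{u}_{\alpha,0,1}$ and the line is generated by $\overline{\Omega_\alpha} \in \wedge^{0,1}\mathfrak{u}_\alpha^\ast$.

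For item $\iota\iota.$, suppose $\mathbb{J}_\alpha = \mathcal{J}_\alpha$ is of noncomplex type. By Proposition \ref{BFieldSymplectic} we have $\mathbb{J}_\alpha = e^{-B_\alpha}\mathbb{J}_{\omega_\alpha}e^{B_\alpha}$ with $B_\alpha = -\tfrac{a_\alpha}{x_\alpha}S_\alpha^\ast\wedge A_\alpha^\ast$, where $\mathbb{J}_{\omega_\alpha}$ is the symplectic-type structure of Notation \ref{SymplecticAsso} associated to the 2-form $\omega_\alpha$. By Example \ref{ExampleS}, the pure spinor line of $\mathbb{J}_{\omega_\alpha}$ is generated by $e^{i\omega_\alpha}$, and a $B$-transform sends $\varphi_{\mathcal{L}}$ to $e^{B}\varphi_{\mathcal{L}}$ (the computation $\varphi_{e^{-B}\mathcal{L}} = e^{B+i\omega}$ displayed in Example \ref{ExampleS}), so the pure spinor line of $\mathbb{J}_\alpha$ is generated by $e^{B_\alpha}e^{i\omega_\alpha} = e^{B_\alpha + i\omega_\alpha}$. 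It remains to identify $\omega_\alpha$: from Remark \ref{SymplecticInvariantType} the nondegenerate 2-form underlying $\mathbb{J}_{\omega_\alpha}$ is $-\tfrac{1}{x_\alpha}S_\alpha^\ast\wedge A_\alpha^\ast$, which by the normalization $X^\ast = k_\alpha\,\omega(\widetilde{X},\cdot)$ (with $k_\alpha = \tfrac{1}{2i\langle H,H_\alpha\rangle}$) equals $-\tfrac{k_\alpha}{x_\alpha}\,\omega_{b_0}|_{\mathfrak{u}_\alpha}$; this is exactly the stated $\omega_\alpha$. Alternatively, one can bypass Propositions \ref{BFieldSymplectic}--\ref{BFieldComplex} and compute directly: from \eqref{isotropic1} one has $\Delta_\alpha = \mathfrak{u}_\alpha$, so $\Ann\Delta_\alpha = \{0\}$ and $\varepsilon_\alpha \in \wedge^2\mathfrak{u}_\alpha^\ast$ is the full 2-form $\pi_2(\pi_1^{-1}(\cdot)\cap\mathcal{L}_\alpha)$, which one reads off from the generators $x_\alpha A_\alpha + (a_\alpha - i)A_\alpha^\ast$ and $x_\alpha S_\alpha + (a_\alpha - i)S_\alpha^\ast$ to be $-\tfrac{a_\alpha - i}{x_\alpha}S_\alpha^\ast\wedge A_\alpha^\ast = B_\alpha + i\omega_\alpha$; Proposition \ref{Chevalley} with $k = 0$ then gives $\varphi_{\mathcal{L}_\alpha} = \exp(B_\alpha + i\omega_\alpha)$ directly.

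The routine but slightly delicate part is bookkeeping the identifications $\mathfrak{u}_\alpha \cong \mathfrak{u}_\alpha^\ast$ and keeping track of the scalar factors ($k_\alpha$, $x_\alpha$, and the choice of dual basis $\{-S_\alpha^\ast, A_\alpha^\ast\}$) so that the 2-form extracted from $\mathcal{L}_\alpha$ matches the claimed $B_\alpha + i\omega_\alpha$ on the nose rather than up to an ambiguous constant; this is where one must be careful with signs, but it is a finite linear-algebra check with the data already fixed in Remark \ref{canonical} and the KKS normalization. I do not anticipate a genuine obstacle — the lemma is essentially the specialization of Examples \ref{ExampleS} and \ref{ExampleC} and Proposition \ref{Chevalley} to the two-dimensional building blocks $\mathfrak{u}_\alpha$, with Propositions \ref{BFieldSymplectic} and \ref{BFieldComplex} supplying the reduction to the model cases.
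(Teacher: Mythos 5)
Your proof is correct and takes essentially the same route as the paper, which deduces the lemma directly from Propositions \ref{BFieldSymplectic} and \ref{BFieldComplex} together with the model pure spinors of Examples \ref{ExampleS} and \ref{ExampleC} (symplectic type gives $e^{i\omega_\alpha}$, the $B$-transform multiplies by $e^{B_\alpha}$, and complex type gives the $(1,0)$-form read off from $\Ann\Delta_\alpha$). One small caution in your alternative direct computation: Proposition \ref{Chevalley} requires $i^\ast B=-\varepsilon$, so the exponent of the spinor is $-\varepsilon_\alpha$ rather than $\varepsilon_\alpha$ itself; the identification you wrote conflates the two, and only after tracking the sign conventions for $i_X(S_\alpha^\ast\wedge A_\alpha^\ast)$ and the dual basis $\lbrace -S_\alpha^\ast, A_\alpha^\ast\rbrace$ does this route land on $e^{B_\alpha+i\omega_\alpha}$ with the correct sign.
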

Let us decompose the set of positive roots as $\Pi^+= \Theta_{nc}\sqcup\Theta_{c}$ where $\mathbb{J}_\alpha=\mathcal{J}_\alpha$ is of noncomplex type for all $\alpha\in \Theta_{nc}$ and $\mathbb{J}_\alpha=\pm \mathcal{J}_0$ is of complex type for all $\alpha\in \Theta_{c}$.
\begin{remark}
$\textnormal{type}(\mathbb{J}):=\textnormal{type}(\mathbb{J})_{b_0}=\vert \Theta_{c} \vert$.
\end{remark}
Clearly, when $\Theta_{c}=\emptyset$ we get that $\mathbb{J}$ is a $B$-transform of a structure of symplectic type and when $\Theta_{c}=\Pi^+$ we obtain that $\mathbb{J}$ is of complex type. Otherwise, when $\Theta_{c}\subset\Pi^+$ we get a generalized almost complex structure which is neither complex nor 
symplectic.

Thus, we may explicitly write the invariant pure spinor associated to each invariant generalized almost complex structure $\mathbb{J}$ as follows.

\begin{proposition}\label{Spinor}
Let $\mathbb{J}$ be an invariant generalized almost complex structure on a maximal flag $\mathbb{F}$. Then the invariant pure spinor line $K_\mathcal{L}<\bigwedge^\bullet (\mathfrak{n}^-)^\ast\otimes \mathbb{C}$ associated to $\mathbb{J}$ is generated by
	$$\varphi_\mathcal{L}=e^{\sum_{\alpha\in \Theta_{nc}}(B_\alpha +i\omega_\alpha)}\bigwedge_{\alpha\in \Theta_{c}}\Omega_\alpha,$$
	where $B_\alpha=-\dfrac{a_\alpha}{x_\alpha}S_\alpha^\ast\wedge A_\alpha^\ast$ and $\omega_\alpha=-\dfrac{k_\alpha}{x_\alpha}\omega_{b_0}|_{\mathfrak{u}_\alpha}$ with $k_\alpha=\dfrac{1}{2i\langle H,H_\alpha\rangle}$ for all $\alpha\in \Theta_{nc}$, and $\Omega_\alpha\in \wedge^{1,0}\mathfrak{u}_\alpha^\ast$ defines a complex structure on $\mathfrak{u}_\alpha$ for all $\alpha\in \Theta_{c}$.
\end{proposition}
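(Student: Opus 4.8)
The plan is to reduce the statement to the origin $b_0$ and then assemble the spinor out of its root-space pieces. By invariance the line $K_\mathcal{L}$ is determined by its fiber at $b_0$, where $\mathbb{J}$ splits as $\mathbb{J}=\bigoplus_{\alpha\in\Pi^+}\mathbb{J}_\alpha$ on $\mathfrak{n}^-\oplus(\mathfrak{n}^-)^\ast=\bigoplus_{\alpha\in\Pi^+}(\mathfrak{u}_\alpha\oplus\mathfrak{u}_\alpha^\ast)$, with $+i$-eigenspace $\mathcal{L}=\bigoplus_{\alpha\in\Pi^+}\mathcal{L}_\alpha$ as in \eqref{isotropic1}. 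The pure spinor attached to a maximal isotropic subspace is a pointwise, purely linear-algebraic object (Proposition \ref{Chevalley}), and for an orthogonal direct sum of maximal isotropics the corresponding pure spinor line is generated by the wedge of the generators of the factors — this is the linear-algebra content of the product formula for pure spinors recalled above, extended to a finite direct sum by the induction indicated there. Hence $K_\mathcal{L}$ is generated by $\bigwedge_{\alpha\in\Pi^+}\varphi_{\mathcal{L}_\alpha}$, where each $\varphi_{\mathcal{L}_\alpha}$ generates the pure spinor line of $\mathbb{J}_\alpha$ on $\mathfrak{u}_\alpha$, regarded inside $\bigwedge^\bullet(\mathfrak{n}^-)^\ast\otimes\mathbb{C}$ via the inclusion $\mathfrak{u}_\alpha^\ast\hookrightarrow(\mathfrak{n}^-)^\ast$.

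Next I would insert the explicit generators supplied by Lemma \ref{SpinorRoots}: for $\alpha\in\Theta_{nc}$ one may take $\varphi_{\mathcal{L}_\alpha}=e^{B_\alpha+i\omega_\alpha}$ with $B_\alpha=-\frac{a_\alpha}{x_\alpha}S_\alpha^\ast\wedge A_\alpha^\ast$ and $\omega_\alpha=-\frac{k_\alpha}{x_\alpha}\omega_{b_0}|_{\mathfrak{u}_\alpha}$, while for $\alpha\in\Theta_c$ one may take $\varphi_{\mathcal{L}_\alpha}=\Omega_\alpha$ if $\mathbb{J}_\alpha=\mathcal{J}_0$ and $\varphi_{\mathcal{L}_\alpha}=\overline{\Omega_\alpha}$ if $\mathbb{J}_\alpha=-\mathcal{J}_0$; since $\overline{\Omega_\alpha}\in\wedge^{1,0}\mathfrak{u}_\alpha^\ast$ for the conjugate complex structure, after relabelling the $\Theta_c$-factor is in every case a $(1,0)$-form $\Omega_\alpha$ defining a complex structure on $\mathfrak{u}_\alpha$. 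Reordering the wedge factors changes the generator only by a nonzero scalar, hence not the line, so $K_\mathcal{L}$ is generated by $\big(\bigwedge_{\alpha\in\Theta_{nc}}e^{B_\alpha+i\omega_\alpha}\big)\wedge\bigwedge_{\alpha\in\Theta_c}\Omega_\alpha$.

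It remains to identify $\bigwedge_{\alpha\in\Theta_{nc}}e^{B_\alpha+i\omega_\alpha}$ with $e^{\sum_{\alpha\in\Theta_{nc}}(B_\alpha+i\omega_\alpha)}$. Each $B_\alpha+i\omega_\alpha$ is a complex $2$-form supported on the summand $\mathfrak{u}_\alpha$; since $2$-forms have even degree they commute under the wedge product, and therefore $\bigwedge_{\alpha\in\Theta_{nc}}e^{B_\alpha+i\omega_\alpha}=\prod_{\alpha\in\Theta_{nc}}e^{B_\alpha+i\omega_\alpha}=e^{\sum_{\alpha\in\Theta_{nc}}(B_\alpha+i\omega_\alpha)}$ — concretely, both sides expand as $\sum_{S\subseteq\Theta_{nc}}\prod_{\alpha\in S}(B_\alpha+i\omega_\alpha)$, the higher powers of each $B_\alpha+i\omega_\alpha$ vanishing because $\dim_\mathbb{R}\mathfrak{u}_\alpha=2$. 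Substituting this back yields precisely $\varphi_\mathcal{L}=e^{\sum_{\alpha\in\Theta_{nc}}(B_\alpha+i\omega_\alpha)}\bigwedge_{\alpha\in\Theta_c}\Omega_\alpha$ as claimed. The only step needing care is the first one: checking that the product formula for pure spinor bundles — phrased for a product of generalized almost complex manifolds — does apply in our direct-sum-of-root-spaces situation, which holds because $\mathcal{L}=\bigoplus_\alpha\mathcal{L}_\alpha$ is orthogonal for the Cartan--Killing pairing, so the Clifford action of $\mathcal{L}_\alpha$ on $\bigwedge^\bullet(\mathfrak{n}^-)^\ast\otimes\mathbb{C}$ only couples to the $\mathfrak{u}_\alpha$-part and the argument factorizes exactly as in the cited remark.
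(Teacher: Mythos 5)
Your proposal is correct and follows essentially the same route the paper takes: the paper derives Proposition \ref{Spinor} directly from the remark on pure spinors of products of generalized almost complex structures together with Lemma \ref{SpinorRoots}, exactly as you do. Your additional checks --- that the product formula transfers from products of manifolds to the orthogonal direct sum $\bigoplus_{\alpha}(\mathfrak{u}_\alpha\oplus\mathfrak{u}_\alpha^\ast)$, and that $\bigwedge_{\alpha\in\Theta_{nc}}e^{B_\alpha+i\omega_\alpha}=e^{\sum_{\alpha\in\Theta_{nc}}(B_\alpha+i\omega_\alpha)}$ because the $2$-forms commute and are supported on distinct summands --- are exactly the details the paper leaves implicit, so they strengthen rather than alter the argument.
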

\begin{remark}
As $\varphi_\mathcal{L}$ is initially defined on $T_{b_0}\mathbb{F}=\mathfrak{n}^-$ we can also use  invariance to define $\varphi_\mathcal{L}$ on $\mathbb{F}$ as follows. Assume that $\varphi_\mathcal{L}\in \bigwedge^k (\mathfrak{n}^-)^\ast\otimes \mathbb{C}$. Thus, at $x=gT\in \mathbb{F}=U/T$ for some $g\in U$ we define
$$\widetilde{\varphi_\mathcal{L}}_x(X_1(x),\cdots, X_k(x))=\varphi_\mathcal{L}(\Ad(g^{-1})_{\ast,x}X_1(x),\cdots,\Ad(g^{-1})_{\ast,x}X_k(x)).$$
In this case $\widetilde{\varphi_\mathcal{L}}\in \bigwedge^k T^\ast\mathbb{F}\otimes \mathbb{C}$ would be a pure spinor determined by $\mathbb{J}: \mathbb{TF}\to \mathbb{TF}$ which satisfies $(\Ad(g))^\ast\widetilde{\varphi_\mathcal{L}}=\widetilde{\varphi_\mathcal{L}}$ for all $g\in U$.
\end{remark}
Now we want to give a brief description of the effects of the action by invariant $B$-transformations on the space of invariant pure spinors on $\mathbb{F}$. The quotient space that we will get is naturally isomorphic to $\mathfrak{M}_a(\mathbb{F})$. 
\begin{remark}
Let $\textnormal{PSpin}(M)\subset\bigwedge^\bullet T^\ast M\otimes \mathbb{C}$ denote the set of pure spinors on $M$. It is simple to check that the map $\mathcal{B}\times \textnormal{PSpin}(M)\to \textnormal{PSpin}(M)$ given by $e^B \cdot \varphi=e^B\varphi$ is a well defined action of $\mathcal{B}$ on $\textnormal{PSpin}(M)$. Suppose that $\mathbb{J}$ and $\mathbb{J}'$ are the generalized almost complex structures on $M$ associated with the pure spinors
 $\varphi$ and $\varphi'$, respectively. Then, it is simple to see that $\varphi$ and $\varphi'$ belong to the same orbit by this action if and only if there exists $B\in\Omega^2(M)$ such that $e^{-B}\mathbb{J}e^B=\mathbb{J}'$. We will denote by $$\mathfrak{M}_s:=\textnormal{PSpin}(M)/\mathcal{B}$$
  the quotient space 
 of this action. Such quotient space should be naturally isomorphic to the quotient space $\mathfrak{M}_a$ of Definition \ref{Bmoduli}.
\end{remark}
Let us now consider a maximal flag manifold $\mathbb{F}$ and let $\alpha$ be a positive root. Denote by $\textnormal{PSpin}(\mathbb{F})_\alpha$ the set of invariant elements in $\textnormal{PSpin}(\mathbb{F})$ restricted to $\mathfrak{u}_\alpha$. This set is composed by those elements described in Lemma \ref{SpinorRoots}. Using  arguments similar to the ones
 in Proposition \ref{AmoduloB} we get that Corollary \ref{BField2} and Lemma \ref{SpinorRoots} imply
$$\mathfrak{M}_s(\mathbb{F})_\alpha=\textnormal{PSpin}(\mathbb{F})_\alpha/\mathcal{B}=\mathbb{R}^\ast\cup \lbrace \mathbf{s},\overline{\mathbf{s}}\rbrace.$$
		Here we are representing $\Omega_\alpha$ by $\mathbf{s}$ and $\overline{\Omega_\alpha}$ by $\overline{\mathbf{s}}$. 
		Furthermore, Theorem \ref{GlobalBField} gives: 
\begin{proposition}
Suppose that $\vert \Pi^+ \vert=d$. Then
$$\mathfrak {M}_s(\mathbb F)= \prod_{\alpha\in\Pi^+} \mathfrak{M}_s(\mathbb{F})_\alpha=(\mathbb R^\ast \cup \lbrace \mathbf{s},\overline{\mathbf{s}}\rbrace)_{\alpha_1} \times \cdots \times (\mathbb  R^\ast\cup\lbrace \mathbf{s},\overline{\mathbf{s}}\rbrace)_{\alpha_d}.$$
\end{proposition}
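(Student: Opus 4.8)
The plan is to mimic the derivation of Corollary \ref{Bmoduli1} from Theorem \ref{GlobalBField}, transporting it across the correspondence between invariant generalized almost complex structures and invariant pure spinor lines. Concretely, I would first record that every invariant pure spinor line on $\mathbb{F}$ factors as a wedge product indexed by $\Pi^+$ of the restricted pure spinor lines on the blocks $\mathfrak{u}_\alpha$: this is exactly the content of Proposition \ref{Spinor} read together with Lemma \ref{SpinorRoots}, since there the generator $\varphi_\mathcal{L}$ is displayed as $\bigwedge_{\alpha\in\Theta_{nc}}e^{B_\alpha+i\omega_\alpha}\wedge\bigwedge_{\alpha\in\Theta_{c}}\Omega_\alpha$, and each factor is a generator of $\textnormal{PSpin}(\mathbb{F})_\alpha$ by Lemma \ref{SpinorRoots}. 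Conversely, the product remark for generalized almost complex manifolds shows that wedging one generator from each $\textnormal{PSpin}(\mathbb{F})_\alpha$ produces an invariant pure spinor line whose underlying structure is $\bigoplus_\alpha\mathbb{J}_\alpha$. Thus the wedge map $\Phi\colon\prod_{\alpha\in\Pi^+}\textnormal{PSpin}(\mathbb{F})_\alpha\to\textnormal{PSpin}(\mathbb{F})$ is surjective onto the invariant pure spinor lines, and it is injective because a pure spinor line determines its maximal isotropic subspace $\mathcal{L}$, hence the summands $\mathcal{L}_\alpha=\mathcal{L}\cap\big((\mathfrak{u}_\alpha\oplus\mathfrak{u}_\alpha^\ast)\otimes\mathbb{C}\big)$, hence each factor up to scale.

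Next I would check that invariant $B$-transformations respect this product. An invariant $B\in\bigwedge^2(\mathfrak{n}^-)^\ast$ is forced to be block diagonal for the decomposition $\mathfrak{n}^-=\bigoplus_{\alpha}\mathfrak{u}_\alpha$: the adjoint $T$-action on $\mathfrak{u}_\alpha\otimes\mathbb{C}$ carries weights $\pm\alpha$, so for distinct positive roots $\alpha,\beta$ the space $\mathfrak{u}_\alpha^\ast\wedge\mathfrak{u}_\beta^\ast$ has no nonzero $T$-fixed vector, while the $T$-fixed part of $\bigwedge^2\mathfrak{u}_\alpha^\ast$ is the line spanned by $S_\alpha^\ast\wedge A_\alpha^\ast$; this is the same block form already used in Theorem \ref{GlobalBField}. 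Writing $B=\sum_\alpha B_\alpha$ with $B_\alpha\in\bigwedge^2\mathfrak{u}_\alpha^\ast$, one has $e^{B}\big(\bigwedge_\alpha\varphi_\alpha\big)=\bigwedge_\alpha\big(e^{B_\alpha}\varphi_\alpha\big)$ because $B_\alpha\wedge(\cdot)$ only ever introduces $\mathfrak{u}_\alpha^\ast$-forms. Hence $\Phi$ is equivariant for the factor-wise $\mathcal{B}$-action on the source and the $\mathcal{B}$-action on invariant pure spinor lines on the target.

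Passing to quotients then gives a bijection $\prod_{\alpha\in\Pi^+}\mathfrak{M}_s(\mathbb{F})_\alpha\xrightarrow{\ \sim\ }\mathfrak{M}_s(\mathbb{F})$, and substituting the already established identity $\mathfrak{M}_s(\mathbb{F})_\alpha=\mathbb{R}^\ast\cup\lbrace\mathbf{s},\overline{\mathbf{s}}\rbrace$ for each $\alpha=\alpha_j$ yields the asserted product. As a sanity check, and as an alternative shorter route, the same conclusion follows from the $\mathcal{B}$-equivariant correspondence between invariant pure spinor lines and invariant generalized almost complex structures noted in the preceding remark, which identifies $\mathfrak{M}_s(\mathbb{F})$ with $\mathfrak{M}_a(\mathbb{F})$ and hence reduces everything to Corollary \ref{Bmoduli1}, with $\mathbf{s}$ and $\overline{\mathbf{s}}$ playing the roles of the isolated points $\mathbf{0}$ and $-\mathbf{0}$. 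I expect the only genuinely delicate point to be the bookkeeping behind ``$\Phi$ is a bijection and is $\mathcal{B}$-equivariant'': one must be sure that wedging across the factors and exponentiating the block-diagonal $B$ commute, up to the irrelevant overall scalars that do not affect spinor \emph{lines}. But this is precisely the computation already carried out block by block in the proof of Theorem \ref{GlobalBField}, so no new obstruction arises.
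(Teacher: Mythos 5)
Your proposal is correct and follows essentially the same route as the paper: the paper obtains this proposition in one line by combining the remark that invariant pure spinor lines and invariant generalized almost complex structures have matching $\mathcal{B}$-orbits with Theorem \ref{GlobalBField} and the already-computed $\mathfrak{M}_s(\mathbb{F})_\alpha$, which is exactly your ``alternative shorter route.'' Your primary argument via the wedge map $\Phi$ and the weight-space verification that invariant $B$'s are block diagonal is just a more explicit spelling-out of the same blockwise decomposition, and it is sound.
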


		Observe that in $\mathfrak{M}_s(\mathbb{F})_\alpha$ the classes of pure spinors determined by structures of noncomplex type are parametrized by $\mathbb{R}^\ast$. When we consider the limit $x_\alpha\mapsto +\infty$ we have that both $B_\alpha \mapsto 0$ and $\omega_\alpha \mapsto 0$. The same is true if $x_\alpha\mapsto -\infty$. Thus, we get that $\lim\limits_{x_\alpha\mapsto \pm\infty} e^{B_\alpha+i\omega_\alpha}=1$. 
	But, in such case we would obtain
	$\varphi_\mathcal{L}=\bigwedge_{\alpha\in \Theta_{c}}\Omega_\alpha$, which does not define a generalized complex 
	structure on $\mathbb F$. On the other hand, clearly the pure spinor $\varphi_\mathcal{L}$ which generates the pure spinor line
can not be zero. So that, here too, we obtain that each for each $\alpha$ we have that $ \mathbf{s}$ and $\overline{\mathbf{s}}$ are isolated points.
Finally, taking the product topology, we have a natural isomorphisms between 
$\mathfrak {M}_s(\mathbb F)$ and $\mathfrak {M}_a(\mathbb F)$.

\section{Invariant generalized K\"ahler structures}\label{S:6}

In this section we characterize  invariant generalized K\"ahler structures on $\mathbb{F}$.
With this in mind, first we exhibit  invariant generalized almost K\"ahler structures on $\mathbb{F}$. 
Given the requirement of invariance,  it suffices to know what happens at the origin $b_0$ of $\mathbb{F}$. Thus,
we need to find pairs of commuting invariant  generalized almost 
complex structures $(\mathbb{J},\mathbb{J'})$ such that $G:=-\mathbb{J}\mathbb{J'}$ 
defines a positive definite metric on $\mathfrak{n}^-\oplus (\mathfrak{n}^-)^\ast$. As every invariant generalized complex structure $\mathbb{J}:\mathfrak{n}^-\oplus(\mathfrak{n}^-)^\ast\to \mathfrak{n}^-\oplus(\mathfrak{n}^-)^\ast$ on $\mathbb{F}$ may be written as $$\mathbb{J}=\bigoplus_{\alpha\in\Pi^+}\mathbb{J}_\alpha,$$
where $\mathfrak{n}^-=\bigoplus_{\alpha\in\Pi^+}\mathfrak{u}_\alpha$ and $\mathbb{J}_\alpha:\mathfrak{u}_\alpha\oplus\mathfrak{u}^\ast_\alpha\to \mathfrak{u}_\alpha\oplus\mathfrak{u}^\ast_\alpha$ is described in Remark \ref{canonical}, we just need to determine those commuting pairs $(\mathbb{J}_\alpha,\mathbb{J'}_\alpha)$ such that $G_\alpha:=-\mathbb{J}_{\alpha}\mathbb{J}_{\alpha}'$ defines a positive definite metric on $\mathfrak{u}_\alpha\oplus\mathfrak{u}_\alpha^\ast$ for every positive root $\alpha$.

 We will use the following observation:
\begin{remark}\cite{G2}\label{diagonalBaction}
	Let $(\mathbb{J},\mathbb{J}')$ be a generalized almost K\"ahler structure on $M$ with generalized K\"ahler metric $G=-\mathbb{J}\mathbb{J}'$ and $B\in \Omega^2(M)$. Then the pair $(\mathbb{J},\mathbb{J}')_B:=(e^{-B}\mathbb{J}e^B,e^{-B}\mathbb{J}'e^B)$ is again a generalized almost  K\"ahler structure on $M$ and its respective generalized K\"ahler metric is given by $G_B=e^{-B}Ge^B$. We will refer to the action $e^B\cdot (\mathbb{J},\mathbb{J}')=(\mathbb{J},\mathbb{J}')_B$ as diagonal action.
\end{remark}

\begin{proposition}
A generalized K\"ahler structure on $\mathfrak u_\alpha$  is given by  a pair  $(\mathbb{J}_\alpha,\mathbb{J}'_\alpha)$ where 
$\mathbb J_\alpha$ is of complex type, and $\mathbb{J}'_\alpha$ is of noncomplex type (or vice-versa). 
\end{proposition}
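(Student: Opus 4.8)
The plan is to work entirely at the origin $b_0$ on each $4$-dimensional block $\mathfrak{u}_\alpha\oplus\mathfrak{u}_\alpha^\ast$, and to exhaust the possible combinations of types for a commuting pair $(\mathbb{J}_\alpha,\mathbb{J}'_\alpha)$. By Lemma \ref{Aalpha} each of $\mathbb{J}_\alpha$ and $\mathbb{J}'_\alpha$ is either of complex type ($\pm\mathcal{J}_0$) or of noncomplex type ($\mathcal{J}_\alpha$ with $a_\alpha^2=x_\alpha y_\alpha-1$). So there are, a priori, three cases: both complex, both noncomplex, and one of each. The claim is that a generalized Kähler pair on $\mathfrak{u}_\alpha$ must fall into the mixed case. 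First I would dispose of the two homogeneous cases by a direct computation of $G_\alpha=-\mathbb{J}_\alpha\mathbb{J}'_\alpha$ and checking that it cannot be positive definite.

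For the \emph{both complex} case: the commuting pairs among $\{\mathcal{J}_0,-\mathcal{J}_0\}$ are exactly $(\mathcal{J}_0,\mathcal{J}_0)$, $(\mathcal{J}_0,-\mathcal{J}_0)$, $(-\mathcal{J}_0,\mathcal{J}_0)$, $(-\mathcal{J}_0,-\mathcal{J}_0)$; in each case $-\mathbb{J}_\alpha\mathbb{J}'_\alpha=\pm\mathcal{J}_0^2=\mp 1$ (for the $\pm$ same-sign pairs we get $-\mathcal{J}_0^2=+1$, which \emph{is} positive definite — wait, this needs care). Actually $-\mathcal{J}_0\cdot\mathcal{J}_0 = -\mathcal{J}_0^2 = +\mathrm{Id}$, which is positive definite, so the case $\mathbb{J}_\alpha=\mathbb{J}'_\alpha$ must be excluded on the separate ground that a generalized Kähler pair requires $\mathbb{J}\neq\mathbb{J}'$ (indeed $G=-\mathbb{J}^2=\mathrm{Id}$ is the trivial metric, and more to the point $\mathbb{J}=\mathbb{J}'$ fails to give a genuine generalized Kähler structure in the sense that the two structures must be distinct; one can also note $-\mathbb{J}\mathbb{J}'$ being the identity forces $\mathbb{J}'=\mathbb{J}^{-1}=-\mathbb{J}$, contradiction). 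For the remaining complex pairs, $-\mathcal{J}_0(-\mathcal{J}_0)=\mathcal{J}_0^2=-1$ is negative definite, hence not admissible. So no all-complex pair works once we require the two structures to be genuinely different. For the \emph{both noncomplex} case: I would write $\mathbb{J}_\alpha=\mathcal{J}_\alpha$ with parameters $(a_\alpha,x_\alpha,y_\alpha)$ and $\mathbb{J}'_\alpha=\mathcal{J}'_\alpha$ with parameters $(b_\alpha,x'_\alpha,y'_\alpha)$, impose the commutation relation $\mathbb{J}_\alpha\mathbb{J}'_\alpha=\mathbb{J}'_\alpha\mathbb{J}_\alpha$, and compute $G_\alpha=-\mathbb{J}_\alpha\mathbb{J}'_\alpha$ explicitly as a $4\times 4$ matrix in block form $\begin{pmatrix}\mathcal{A}&\mathcal{X}\\\mathcal{Y}&-\mathcal{A}\end{pmatrix}\begin{pmatrix}\mathcal{A}'&\mathcal{X}'\\\mathcal{Y}'&-\mathcal{A}'\end{pmatrix}$; positive definiteness of a symmetric matrix of this shape forces, among other things, the diagonal blocks $-(\mathcal{A}\mathcal{A}'+\mathcal{X}\mathcal{Y}')$ and $-(\mathcal{Y}\mathcal{X}'+\mathcal{A}\mathcal{A}')$ to be positive definite, and I expect the constraint $a_\alpha^2=x_\alpha y_\alpha-1$ (together with its primed analogue and the commutation relation) to be incompatible with this — essentially because each $\mathcal{J}_\alpha$ is conjugate by a $B$-transformation to a symplectic structure $\mathbb{J}_{\omega_\alpha}$ (Proposition \ref{BFieldSymplectic}), and by Remark \ref{diagonalBaction} a common $B$ would reduce the question to whether two \emph{symplectic}-type structures on $\mathfrak{u}_\alpha$ can form a Kähler pair, which they cannot since $-\mathbb{J}_{\omega}\mathbb{J}_{\omega'}$ is never positive definite (it has the block form $\begin{pmatrix}\omega^{-1}\omega' & 0\\ 0 & \omega(\omega')^{-1}\end{pmatrix}$ up to sign, which is not symmetric-positive-definite in dimension $2+2$ unless it is $\pm\mathrm{Id}$, and $+\mathrm{Id}$ again forces $\omega=\omega'$, i.e. $x_\alpha=x'_\alpha$, hence $\mathbb{J}_\alpha$ and $\mathbb{J}'_\alpha$ would be $B$-equivalent to the \emph{same} symplectic structure but that still does not make the pair Kähler). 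Here the subtlety is that $B$-equivalence must be applied with the \emph{same} $B$ to both members — but after applying $e^{B}$ to kill the $\mathcal{A}$-part of $\mathbb{J}_\alpha$, the transform of $\mathbb{J}'_\alpha$ need not be symplectic type; still, it stays noncomplex type (type is $B$-invariant), so the reduction is to: two noncomplex-type structures on $\mathfrak{u}_\alpha$ cannot be a Kähler pair, one of which is symplectic — and this I would just check by the direct $4\times4$ computation.

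For the \emph{mixed} case I would exhibit the pair explicitly: take $\mathbb{J}_\alpha=\mathcal{J}_0$ (complex type) and $\mathbb{J}'_\alpha$ of noncomplex type. The natural candidate is $\mathbb{J}'_\alpha$ a $B$-transform of $\mathbb{J}_{\omega_\alpha}$ with $\omega_\alpha$ compatible with $J_0$ — i.e.\ $\omega_\alpha=-\tfrac{k_\alpha}{x_\alpha}\omega_{b_0}|_{\mathfrak{u}_\alpha}$ with the sign of $x_\alpha$ chosen so that $\omega_\alpha(\cdot,J_0\cdot)$ is positive definite (this is the content of "with the same orientation" in the surrounding text). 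One checks directly that $\mathcal{J}_0$ commutes with $\mathbb{J}_{\omega_\alpha}$ (this is exactly the relation $\omega_\alpha J_0=-J_0^\ast\omega_\alpha$, which holds because $\omega_\alpha$ is a multiple of the KKS form restricted to the block, of type $(1,1)$ for $J_0$), so by Remark \ref{diagonalBaction} the $B$-transformed pair $(\mathcal{J}_0,e^{-B_\alpha}\mathbb{J}_{\omega_\alpha}e^{B_\alpha})=(\mathcal{J}_0,\mathbb{J}'_\alpha)$ still commutes (using that $e^{-B}\mathcal{J}_0 e^B=\mathcal{J}_0$ by Proposition \ref{BFieldComplex}), and $G_\alpha=-\mathcal{J}_0\mathbb{J}'_\alpha=e^{-B_\alpha}(-\mathcal{J}_0\mathbb{J}_{\omega_\alpha})e^{B_\alpha}=e^{-B_\alpha}\begin{pmatrix}0&g_\alpha^{-1}\\g_\alpha&0\end{pmatrix}e^{B_\alpha}$, which by Remark \ref{SignatureKahler}/Example \ref{typical} is positive definite precisely when $g_\alpha(\cdot,\cdot)=\omega_\alpha(\cdot,J_0\cdot)$ is — which is the orientation condition. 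Conversely, \emph{any} mixed commuting pair must arise this way: if $\mathbb{J}_\alpha$ is of complex type and $\mathbb{J}'_\alpha$ of noncomplex type, apply the $B_\alpha$ from Proposition \ref{BFieldSymplectic} that carries $\mathbb{J}'_\alpha$ to $\mathbb{J}_{\omega_\alpha}$; since $B_\alpha$ fixes $\mathbb{J}_\alpha=\pm\mathcal{J}_0$, commutativity of the original pair is equivalent to commutativity of $(\pm\mathcal{J}_0,\mathbb{J}_{\omega_\alpha})$, which forces $\omega_\alpha$ to be of type $(1,1)$ for $J_0$, and then positive definiteness of $-(\pm\mathcal{J}_0)\mathbb{J}_{\omega_\alpha}$ pins down the sign (orientation). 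I expect the main obstacle to be the \emph{both noncomplex} case: ruling it out cleanly requires either a somewhat involved $4\times4$ positivity computation or a careful argument that the simultaneous $B$-reduction, even though it only makes one member symplectic, still lands in a configuration that demonstrably fails positivity — I would present the direct computation as the safe route, using the parametrization $a_\alpha^2=x_\alpha y_\alpha-1$ to derive a contradiction with the positivity of the diagonal $2\times2$ blocks of $G_\alpha$.
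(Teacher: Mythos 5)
Your overall strategy (case analysis on the three possible type combinations for a commuting pair, with explicit $4\times 4$ computations on $\mathfrak{u}_\alpha\oplus\mathfrak{u}_\alpha^\ast$) is the same as the paper's, and your treatment of the mixed case matches the paper's almost exactly: reduce $\mathbb{J}'_\alpha$ to $\mathbb{J}_{\omega_\alpha}$ by the $B_\alpha$ of Proposition \ref{BFieldSymplectic}, use that $\pm\mathcal{J}_0$ is fixed by $B$-transformations, and land in Example \ref{typical} with the sign of $x_\alpha$ giving the orientation condition. However, there is a genuine error in your both-complex case. The condition ``$-\mathbb{J}\mathbb{J}'$ is positive definite'' in Definition \ref{ke} refers to positivity of the bilinear form $\langle G\,\cdot,\cdot\rangle$ built from the \emph{split-signature} pairing \eqref{innerproduct}, not to naive positivity of the matrix $G$; equivalently, by Remark \ref{SignatureKahler} the endomorphism $G$ must have signature $(2,2)$ on $\mathfrak{u}_\alpha\oplus\mathfrak{u}_\alpha^\ast$. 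With that reading, $G_\alpha=+I_4$ gives the form $\langle\cdot,\cdot\rangle$ itself, which has signature $(2,2)$ and is not positive definite, so the pair $(\mathcal{J}_0,\mathcal{J}_0)$ is excluded directly --- this is exactly how the paper disposes of it. Your patch (demanding $\mathbb{J}\neq\mathbb{J}'$ as a separate axiom) is not part of the definition, and your claimed algebraic contradiction is false: $-\mathbb{J}\mathbb{J}'=\mathrm{Id}$ gives $\mathbb{J}'=-\mathbb{J}^{-1}=\mathbb{J}$, which is perfectly consistent, not contradictory.

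The second gap is that you never actually close the both-noncomplex case; you only say you ``expect'' the constraints to be incompatible and acknowledge that your proposed simultaneous $B$-reduction does not make both members symplectic. The paper's computation here is short and decisive: writing out the commutation relations for two structures in the canonical noncomplex form yields the system \eqref{comm}, and in every solution branch one finds $G_\alpha=-\mathcal{J}_\alpha\mathcal{J}'_\alpha=(w_\alpha y_\alpha-a_\alpha b_\alpha)\,I_4$, a scalar multiple of the identity, which can never have signature $(2,2)$ (equivalently, $\langle \lambda\,\mathrm{Id}\,v,v\rangle=\lambda\langle v,v\rangle$ is never positive definite for the split pairing). I recommend you carry out that computation rather than the $B$-reduction route; note also that the same signature observation you would need there (a $B$-transform-of-block-diagonal $G$ restricted to $V$ pairs to zero) is exactly the point you are missing in the both-complex case.
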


\begin{proof}
Let $\alpha$ be a positive root.
\begin{enumerate}
\item[$\iota.$] If $\mathbb{J}_\alpha$ and $\mathbb{J}'_\alpha$ are both of complex type we get that $G_\alpha=\pm I_4$. Note that, by Remark \ref{SignatureKahler}, $G_\alpha$ must have signature $(2,2)$. Thus $I_4$ does not induce a generalized metric on $\mathfrak{u}_\alpha \oplus \mathfrak{u}_\alpha^*$,  similarly for $-I_4$.

\item[$\iota\iota.$] Let $\mathbb{J}_{\alpha}$ and $\mathbb{J}_{\alpha}'$ be of noncomplex type and suppose that
$$\mathcal{J}_{\alpha}={\tiny\left( 
	\begin{array}{cccc}
	a_\alpha & 0 & 0 & -x_\alpha\\ 
	0 &  a_\alpha& x_\alpha & 0\\
	0 & -y_\alpha & -a_\alpha & 0\\
	y_\alpha & 0 & 0 & -a_\alpha
	\end{array}%
	\right)} \quad \textnormal{and}\quad \mathcal{J}_{\alpha}'={\tiny\left( 
	\begin{array}{cccc}
	b_\alpha & 0 & 0 & -w_\alpha\\ 
	0 &  b_\alpha& w_\alpha & 0\\
	0 & -z_\alpha & -b_\alpha & 0\\
	z_\alpha & 0 & 0 & -b_\alpha
	\end{array}%
	\right)},$$
where $a_\alpha^2=x_\alpha y_\alpha-1$ and $b_\alpha^2=w_\alpha z_\alpha-1$. Then we have that
$\mathcal{J}_{\alpha}\mathcal{J}_{\alpha}'=\mathcal{J}_{\alpha}'\mathcal{J}_{\alpha}$ if and only if
\begin{equation} \label{comm}
\left\lbrace \begin{array}{lc} 
x_\alpha z_\alpha=w_\alpha y_\alpha\\
b_\alpha x_\alpha=a_\alpha w_\alpha.
\end{array}
\right.
\end{equation}
Since $a_\alpha^2=x_\alpha y_\alpha-1$ and $b_\alpha^2=w_\alpha z_\alpha-1$, 
we have that $x_\alpha,y_\alpha,z_\alpha,w_\alpha$ are nonzero real numbers. 
Thus, the system \eqref{comm} has possible solutions:
\begin{enumerate}
	\item[{\bf S}.] $a_\alpha=0$, $b_\alpha=0$ and $z_\alpha =\dfrac{w_\alpha y_\alpha}{x_\alpha}$ or
	\item[{\bf NC}.] $b_\alpha\neq 0$, $a_\alpha\neq 0$, $x_\alpha=\dfrac{a_\alpha w_\alpha}{b_\alpha}$ and $z_\alpha=\dfrac{b_\alpha y_\alpha}{a_\alpha}$.
\end{enumerate}
Therefore we get that $G_\alpha=-\mathcal{J}_{\alpha}\mathcal{J}_{\alpha}'=(w_\alpha y_\alpha -a_\alpha b_\alpha) I_4$, and 
this does not satisfy the requirement of signature $(2,2)$ for being a generalized metric from Remark \ref{SignatureKahler}.


\item[$\iota \iota \iota.$] Let us now consider a pair $(\mathbb{J}_{\alpha},\mathbb{J}_{\alpha}')$ where 
$\mathbb J_\alpha$  is of complex type and the $\mathbb J'_\alpha$ is of noncomplex type. 
Assume first that $\mathbb{J}_{\alpha}=\pm \mathcal{J}_0$ and $\mathbb{J}_\alpha'$ is of symplectic type ($a_\alpha=0$), that is,
$\mathbb{J}_\alpha'=\left(
\begin{array}{cc}
0 & -\omega_\alpha^{-1}\\
\omega_\alpha & 0
\end{array}%
\right)$
where $\omega_\alpha=\left( 
\begin{array}{cc}
0 & -1/x_\alpha \\
1/x_\alpha  & 0
\end{array}%
\right)=-\dfrac{k_\alpha}{x_\alpha}\cdot \omega_{b_0}|_{\mathfrak{u}_\alpha}$. It then follows that
$$G_\alpha=-(\pm \mathcal{J}_0)\mathcal{J}'_\alpha=\pm \left(
\begin{array}{cc}
0 & g_\alpha^{-1}\\
g_\alpha & 0
\end{array}%
\right)$$
where $g_\alpha=\left( 
\begin{array}{cc}
1/x_\alpha & \\
0  & 1/x_\alpha
\end{array}%
\right)=\dfrac{1}{x_\alpha}\cdot I_2$, which in fact has the form of
Example \ref{typical}.

In this case, we conclude that:
\begin{enumerate}
\item[$\iota\iota\iota_A.$] a pair $(\mathcal{J}_0,\mathcal{J}_\alpha')$, where $\mathcal{J}_\alpha'$ is of symplectic type, defines a generalized K\"ahler structure on $\mathfrak{u}_\alpha$ if and only if $x_\alpha>0$, and 
\item[$\iota\iota\iota_B.$] a pair $(-\mathcal{J}_0,\mathcal{J}_\alpha')$, where $\mathcal{J}_\alpha'$ is of symplectic type, defines a generalized K\"ahler structure on $\mathfrak{u}_\alpha$ if and only if $x_\alpha<0$.
\end{enumerate}
Consider now the more general situation when  $(\pm \mathcal J_0,\mathbb{J}_{\alpha}')$ where 
$\mathbb{J}_{\alpha}'$ is of noncomplex type. Using Remark \ref{diagonalBaction},
we suppose that $\mathbb{J}_{\alpha}'={\tiny\left( 
	\begin{array}{cccc}
	a_\alpha & 0 & 0 & -x_\alpha\\ 
	0 &  a_\alpha& x_\alpha & 0\\
	0 & -y_\alpha & -a_\alpha & 0\\
	y_\alpha & 0 & 0 & -a_\alpha
	\end{array}%
	\right)}$ with $a_\alpha^2=x_\alpha y_\alpha -1$ and let $\mathbb{J}_{\omega_\alpha}'$ be its associated generalized complex structure of symplectic type; see Notation \ref{SymplecticAsso}. By Proposition \ref{BFieldSymplectic} we know that $\mathbb{J}_{\alpha}'=e^{-B_\alpha}\mathbb{J}_{\omega_\alpha}'e^{B_\alpha}$ where $B_\alpha=-\dfrac{a_\alpha}{x_\alpha}S_\alpha^\ast\wedge A_\alpha^\ast$. As the action by $B$-transformations fixes the generalized complex structures of complex type, we get that
\begin{equation}\label{diagonalBaction0}
e^{B_\alpha}\cdot (\pm \mathcal{J}_0,\mathbb{J}_{\omega_\alpha}')=(\pm \mathcal{J}_0,\mathbb{J}_{\alpha}').
\end{equation}
Moreover,
$$G_\alpha=-(\pm \mathcal{J}_0)\mathcal{J}_\alpha=\pm\left( 
\begin{array}{cccc}
0 & a_\alpha & x_\alpha & 0\\ 
-a_\alpha & 0& 0 & x_\alpha\\
y_\alpha & 0 & 0 & -a_\alpha\\
0 & y_\alpha & a_\alpha & 0
\end{array}%
\right),$$
with $a^2=x_\alpha y_\alpha-1$. Thus $G_\alpha$ is a generalized K\"ahler metric, according to Remark \ref{diagonalBaction}.

\end{enumerate}
\end{proof}

As a consequence, it then follows that
the general case behaves similarly to  Example \ref{typical}.

\begin{notation}
Let $\mathbb{J}_{\alpha}$ be of noncomplex type with $\mathcal{J}_{\alpha}={\tiny\left( 
	\begin{array}{cccc}
	a_\alpha & 0 & 0 & -x_\alpha\\ 
	0 &  a_\alpha& x_\alpha & 0\\
	0 & -y_\alpha & -a_\alpha & 0\\
	y_\alpha & 0 & 0 & -a_\alpha
	\end{array}%
	\right)}$ where $a_\alpha^2=x_\alpha y_\alpha -1$. If $x_\alpha>0$, we denote $\mathbb{J}_{\alpha}$ by $\mathcal{J}_{\alpha}^+$. Otherwise, we denote it by $\mathcal{J}_{\alpha}^-$.
\end{notation}
Summing up,
\begin{corollary}\label{Kahlerroots}
Let $(\mathbb{J},\mathbb{J}')$ be an invariant generalized almost K\"ahler structure on $\mathbb{F}$. Then, for all $\alpha\in \Pi^+$, 
the pair  $(\mathbb{J}_\alpha,\mathbb{J}_\alpha')$ takes one of the following values
\begin{center}
	\begin{tabular}{c|c}
		$\mathbb{J}_\alpha$ & $\mathbb{J}'_\alpha$\\
		\hline
		$\mathcal{J}_0$ & $(\mathcal{J}'_\alpha)^+$ \\
		$\mathcal{J}_\alpha^+$ & $\mathcal{J}_0$ \\
		$-\mathcal{J}_0$ & $(\mathcal{J}_\alpha')^- $  \\
		$\mathcal{J}_\alpha^-$ & $-\mathcal{J}_0$.
	\end{tabular}
\end{center}
\end{corollary}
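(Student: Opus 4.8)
The plan is to reduce the statement entirely to the local analysis in the Proposition above, tracking orientations root by root. Since $\mathbb{J}=\bigoplus_{\alpha\in\Pi^+}\mathbb{J}_\alpha$ and $\mathbb{J}'=\bigoplus_{\alpha\in\Pi^+}\mathbb{J}'_\alpha$ both respect the splitting $\mathfrak{n}^-\oplus(\mathfrak{n}^-)^\ast=\bigoplus_{\alpha\in\Pi^+}(\mathfrak{u}_\alpha\oplus\mathfrak{u}_\alpha^\ast)$, the pair $(\mathbb{J},\mathbb{J}')$ is a generalized almost K\"ahler structure on $\mathbb{F}$ if and only if $\mathbb{J}_\alpha\mathbb{J}'_\alpha=\mathbb{J}'_\alpha\mathbb{J}_\alpha$ and $G_\alpha:=-\mathbb{J}_\alpha\mathbb{J}'_\alpha$ is positive definite on $\mathfrak{u}_\alpha\oplus\mathfrak{u}_\alpha^\ast$ for every $\alpha\in\Pi^+$. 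Hence it suffices to fix a positive root $\alpha$ and determine all commuting pairs $(\mathbb{J}_\alpha,\mathbb{J}'_\alpha)$, with $\mathbb{J}_\alpha,\mathbb{J}'_\alpha$ in one of the two local forms of Remark \ref{canonical}, for which $G_\alpha$ is a genuine generalized metric.

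First I would appeal to the Proposition: its items $\iota$ and $\iota\iota$ show that when $\mathbb{J}_\alpha$ and $\mathbb{J}'_\alpha$ are of the same type (both complex, or both noncomplex) the operator $G_\alpha$ is a real scalar multiple of $I_4$, which does not meet the signature $(2,2)$ requirement of Remark \ref{SignatureKahler}. So exactly one of $\mathbb{J}_\alpha,\mathbb{J}'_\alpha$ is of complex type, equal to $\varepsilon\mathcal{J}_0$ for some $\varepsilon\in\{+1,-1\}$, and the other is of noncomplex type with parameter $x_\alpha\in\mathbb{R}^\ast$. This already yields the four shapes listed in the table; what remains is to show that $\varepsilon=+1$ forces $x_\alpha>0$ and $\varepsilon=-1$ forces $x_\alpha<0$, which by the definition of $\mathcal{J}_\alpha^{\pm}$ is exactly the claimed correspondence.

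For that sign bookkeeping I would argue as in item $\iota\iota\iota$ of the Proposition. When the noncomplex factor is of symplectic type ($a_\alpha=0$), the direct computation there gives $G_\alpha=\varepsilon\left(\begin{smallmatrix}0 & g_\alpha^{-1}\\ g_\alpha & 0\end{smallmatrix}\right)$ with $g_\alpha=\tfrac{1}{x_\alpha}I_2$, which is positive definite precisely when $\varepsilon g_\alpha>0$, i.e. when $\varepsilon x_\alpha>0$. For a general noncomplex factor with $a_\alpha\neq 0$, I would write it as $e^{-B_\alpha}\mathbb{J}'_{\omega_\alpha}e^{B_\alpha}$ with $B_\alpha=-\tfrac{a_\alpha}{x_\alpha}S_\alpha^\ast\wedge A_\alpha^\ast$ (Proposition \ref{BFieldSymplectic}), observe that $e^{B_\alpha}$ fixes $\varepsilon\mathcal{J}_0$ (Proposition \ref{BFieldComplex}), and invoke Remark \ref{diagonalBaction}: the pair $(\varepsilon\mathcal{J}_0,\mathbb{J}'_\alpha)$ is the diagonal $B_\alpha$-transform of the symplectic pair $(\varepsilon\mathcal{J}_0,\mathbb{J}'_{\omega_\alpha})$, and $G_\alpha=e^{-B_\alpha}\widehat{G}_\alpha e^{B_\alpha}$ is a congruence of the symplectic-case metric $\widehat{G}_\alpha$, hence positive definite under the same condition $\varepsilon x_\alpha>0$. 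Finally, exchanging the roles of $\mathbb{J}$ and $\mathbb{J}'$ — legitimate because $\mathbb{J}_\alpha\mathbb{J}'_\alpha=\mathbb{J}'_\alpha\mathbb{J}_\alpha$ and positive definiteness of $G_\alpha$ are symmetric in the two structures — produces the two rows in which the noncomplex factor occupies the first slot, completing the table. I expect the only point needing a little care to be the assertion that the diagonal $B_\alpha$-transform preserves positive definiteness; this is precisely Remark \ref{diagonalBaction} (the transformed metric is the honest congruence $e^{-B_\alpha}\widehat{G}_\alpha e^{B_\alpha}$), so nothing beyond the $4\times 4$ linear algebra already displayed in the Proposition is required.
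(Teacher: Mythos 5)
Your proposal is correct and follows essentially the same route as the paper: reduce to the root-space decomposition, invoke the preceding Proposition to exclude pairs of equal type via the signature obstruction of Remark \ref{SignatureKahler}, settle the sign condition $\varepsilon x_\alpha>0$ in the symplectic case by the explicit $4\times 4$ computation, and extend to general noncomplex factors through the diagonal $B_\alpha$-transform of Remark \ref{diagonalBaction} together with Propositions \ref{BFieldSymplectic} and \ref{BFieldComplex}. The paper presents the corollary as a direct "summing up" of that Proposition's proof, so your write-up is just a slightly more explicit version of the same argument.
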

Motivated by Remark \ref{diagonalBaction} we define:
\begin{definition}\label{BmoduliK1}
The moduli space of generalized almost K\"ahler structures on $M$ is defined as the quotient space $\mathfrak{K}_a:=\dfrac{\mathcal{K}_a}{\mathcal{B}}$ determined by the diagonal action of $\mathcal{B}$ on $\mathcal{K}_a$. Analogously, we define the moduli space of generalized  K\"ahler structures on $M$ as the quotient space $\mathfrak{K}=\dfrac{\mathcal{K}}{\mathcal{B}_c}$ determined by the diagonal action of $\mathcal{B}_c$ on $\mathcal{K}$.
\end{definition}
Thus, by Proposition \ref{Kahlerroots} and arguments used to deduce \eqref{diagonalBaction0} we get:
\begin{corollary}
Let $\alpha$ be a positive root. The set  $\displaystyle \mathfrak{K}_\alpha(\mathbb{F})=\frac{\mathcal K_\alpha}{B}$ of equivalence classes of invariant generalized almost K\"ahler structures on $\mathfrak{u}_\alpha$ modulo 
the diagonal action of $B$-transformations is given by:
$$\mathfrak{K}_\alpha(\mathbb{F})=(\lbrace \mathbf{0}\rbrace \times \mathbb{R}^+)\cup (\mathbb{R}^+\times \lbrace \mathbf{0}\rbrace)\cup (\lbrace -\mathbf{0}\rbrace \times \mathbb{R}^-)\cup (\mathbb{R}^-\times \lbrace -\mathbf{0}\rbrace).$$
\end{corollary}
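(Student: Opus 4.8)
The plan is to put every diagonal $B$-orbit into a normal form and then read it off in the coordinates on $\mathfrak{M}_\alpha(\mathbb{F})$ provided by Proposition~\ref{AmoduloB}. By Proposition~\ref{Kahlerroots}, an invariant generalized almost K\"ahler structure $(\mathbb{J}_\alpha,\mathbb{J}'_\alpha)$ on $\mathfrak{u}_\alpha$ has, up to the order of the pair, one entry of complex type equal to $\varepsilon\mathcal{J}_0$ for a sign $\varepsilon\in\{1,-1\}$, and the other entry of noncomplex type with $\varepsilon x_\alpha>0$. Applying the diagonal action of the $B$-transformation $B_\alpha$ of Proposition~\ref{BFieldSymplectic} fixes the complex-type entry (Proposition~\ref{BFieldComplex}) and turns the noncomplex entry into its associated symplectic structure $\mathbb{J}_{\omega_\alpha}$ of Notation~\ref{SymplecticAsso}; this is precisely identity~\eqref{diagonalBaction0}. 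Hence every class of $\mathfrak{K}_\alpha(\mathbb{F})$ has a representative of one of the four forms $(\varepsilon\mathcal{J}_0,\mathbb{J}_{\omega_\alpha})$ or $(\mathbb{J}_{\omega_\alpha},\varepsilon\mathcal{J}_0)$ with $\varepsilon x_\alpha>0$, and $\mathbb{J}_{\omega_\alpha}$ depends only on $x_\alpha$ by Remark~\ref{SymplecticInvariantType}.

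Next I would check that distinct normal forms lie in distinct orbits, i.e. that the canonical map $\mathfrak{K}_\alpha(\mathbb{F})\to\mathfrak{M}_\alpha(\mathbb{F})\times\mathfrak{M}_\alpha(\mathbb{F})$ is injective. The diagonal action preserves the type of each of the two factors separately (Remark~\ref{int}), so a complex-type factor cannot become a noncomplex one; moreover $\mathcal{J}_0$ and $-\mathcal{J}_0$ are two distinct fixed points of the $B$-action (Proposition~\ref{BFieldComplex}), so the sign $\varepsilon$ and which slot carries $\varepsilon\mathcal{J}_0$ are invariants of the orbit. This already separates the four forms. Within a single form, two representatives with symplectic parameters $x_\alpha$ and $x'_\alpha$ are $B$-equivalent only if $x_\alpha=x'_\alpha$, since conjugation by $e^{B}$ does not alter the parameter $x_\alpha$ (see the explicit formulas in Lemma~\ref{Bsymp} and Proposition~\ref{BFieldSymplectic}) and, by Remark~\ref{SymplecticInvariantType}, different values of $x_\alpha$ give non-isomorphic symplectic forms. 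Conversely, for a fixed form and fixed $x_\alpha$ all representatives lie in one orbit: this is Corollary~\ref{BField2} applied in the noncomplex slot, the complex slot being automatically fixed. Thus a class is determined precisely by which slot carries $\varepsilon\mathcal{J}_0$ together with the value $x_\alpha$.

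Finally I would translate into coordinates. Under the identifications $\mathcal{J}_0\leftrightarrow\mathbf{0}$, $-\mathcal{J}_0\leftrightarrow-\mathbf{0}$ and a noncomplex structure $\leftrightarrow x_\alpha\in\mathbb{R}^\ast$ of Proposition~\ref{AmoduloB}, the form $(\mathcal{J}_0,\mathbb{J}_{\omega_\alpha})$ with $x_\alpha>0$ maps to $\{\mathbf{0}\}\times\mathbb{R}^+$, the form $(\mathbb{J}_{\omega_\alpha},\mathcal{J}_0)$ with $x_\alpha>0$ to $\mathbb{R}^+\times\{\mathbf{0}\}$, the form $(-\mathcal{J}_0,\mathbb{J}_{\omega_\alpha})$ with $x_\alpha<0$ to $\{-\mathbf{0}\}\times\mathbb{R}^-$, and the form $(\mathbb{J}_{\omega_\alpha},-\mathcal{J}_0)$ with $x_\alpha<0$ to $\mathbb{R}^-\times\{-\mathbf{0}\}$. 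These four subsets of $\mathfrak{M}_\alpha(\mathbb{F})\times\mathfrak{M}_\alpha(\mathbb{F})$ are pairwise disjoint, and by the previous two paragraphs their union is exactly $\mathfrak{K}_\alpha(\mathbb{F})$, which is the claimed equality. The step requiring the most care is the disjointness/injectivity bookkeeping of the second paragraph; the normal-form reduction and the coordinate identification are direct consequences of Propositions~\ref{Kahlerroots}, \ref{BFieldComplex}, \ref{BFieldSymplectic} and Corollary~\ref{BField2}.
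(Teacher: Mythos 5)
Your argument is correct and follows essentially the same route as the paper, which derives the corollary directly from Corollary \ref{Kahlerroots} together with the normal-form identity \eqref{diagonalBaction0}. The only difference is that you spell out the injectivity bookkeeping (types and the sign $\varepsilon$ are orbit invariants, and $x_\alpha$ is unchanged by conjugation with $e^{B}$), which the paper leaves implicit; this is a welcome addition but not a different method.
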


The natural topology on $\mathfrak{K}_\alpha(\mathbb{F})$ as subspace of $\mathbb R^2$  coincides with the  topology induced as subspace   from $\mathfrak{M}_\alpha(\mathbb{F})\times \mathfrak{M}_\alpha(\mathbb{F})$. In further  generality, if $\mathbb{R}^\dag:=(\lbrace \mathbf{0}\rbrace \times \mathbb{R}^+)\cup (\mathbb{R}^+\times \lbrace \mathbf{0}\rbrace)\cup (\lbrace -\mathbf{0}\rbrace \times \mathbb{R}^-)\cup (\mathbb{R}^-\times \lbrace -\mathbf{0}\rbrace)$ (the union of the four rays) we have that
\begin{theorem}\label{BmoduliK2}
	Suppose that $\vert \Pi^+ \vert=d$. Then
	$$\mathfrak K_a (\mathbb F)= \prod_{\alpha\in\Pi^+} \mathfrak{K}_\alpha(\mathbb{F})=\mathbb{R}^\dag_{\alpha_1}\times \cdots \times\mathbb{R}^\dag_{\alpha_d}=(\mathbb{R}^\dag)^d.$$
\end{theorem}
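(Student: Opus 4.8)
The plan is to mirror the derivation of Corollary \ref{Bmoduli1} from Theorem \ref{GlobalBField}, working now with pairs and the diagonal $B$-action of Remark \ref{diagonalBaction}. First I would observe that an invariant generalized almost K\"ahler structure $(\mathbb{J},\mathbb{J}')$ on $\mathbb{F}$ splits blockwise as $\bigoplus_{\alpha\in\Pi^+}(\mathbb{J}_\alpha,\mathbb{J}'_\alpha)$, each $(\mathbb{J}_\alpha,\mathbb{J}'_\alpha)$ being a generalized almost K\"ahler structure on $\mathfrak{u}_\alpha\oplus\mathfrak{u}_\alpha^\ast$: commutativity of $\mathbb{J}$ and $\mathbb{J}'$ and positive-definiteness of $-\mathbb{J}\mathbb{J}'$ are tested on each summand, and by Corollary \ref{Kahlerroots} the pair $(\mathbb{J}_\alpha,\mathbb{J}'_\alpha)$ is then one of the four listed types. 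This yields a natural map $\Phi\colon\mathfrak{K}_a(\mathbb{F})\to\prod_{\alpha\in\Pi^+}\mathfrak{K}_\alpha(\mathbb{F})$, $[(\mathbb{J},\mathbb{J}')]\mapsto\big([(\mathbb{J}_\alpha,\mathbb{J}'_\alpha)]\big)_{\alpha\in\Pi^+}$, which is well defined because an invariant $B\in\bigwedge^2(\mathfrak{n}^-)^\ast$ is block-diagonal with respect to $\mathfrak{n}^-=\bigoplus_\alpha\mathfrak{u}_\alpha$ (the $\mathfrak{u}_\alpha$ being pairwise inequivalent as $T$-representations) and hence acts diagonally root by root. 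It then suffices to prove that $\Phi$ is a bijection, since the preceding corollary identifies $\mathfrak{K}_\alpha(\mathbb{F})$ with $\mathbb{R}^\dag$, whence $\prod_{\alpha}\mathfrak{K}_\alpha(\mathbb{F})=\mathbb{R}^\dag_{\alpha_1}\times\cdots\times\mathbb{R}^\dag_{\alpha_d}=(\mathbb{R}^\dag)^d$, the product topology agreeing with the one inherited from $\mathfrak{M}_a(\mathbb{F})\times\mathfrak{M}_a(\mathbb{F})$ exactly as in Corollary \ref{Bmoduli1}.

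Surjectivity of $\Phi$ is direct: picking for each $\alpha$ a representative $(\mathbb{J}_\alpha,\mathbb{J}'_\alpha)$ from one of the four rows of Corollary \ref{Kahlerroots}, the block-diagonal pair $\big(\bigoplus_\alpha\mathbb{J}_\alpha,\bigoplus_\alpha\mathbb{J}'_\alpha\big)$ is invariant, commuting, and $-\mathbb{J}\mathbb{J}'$ is block-diagonal with each block a generalized K\"ahler metric of the standard shape of Example \ref{typical}; hence it is an invariant generalized almost K\"ahler structure realizing the prescribed tuple. For injectivity I would establish a diagonal analogue of Theorem \ref{GlobalBField}: if $(\mathbb{J},\mathbb{J}')$ and $(\widetilde{\mathbb{J}},\widetilde{\mathbb{J}}')$ are invariant generalized almost K\"ahler structures such that for every $\alpha$ there is $B_\alpha\in\bigwedge^2\mathfrak{u}_\alpha^\ast$ with $(e^{-B_\alpha}\mathbb{J}_\alpha e^{B_\alpha},\,e^{-B_\alpha}\mathbb{J}'_\alpha e^{B_\alpha})=(\widetilde{\mathbb{J}}_\alpha,\widetilde{\mathbb{J}}'_\alpha)$, then the block-diagonal $B=\bigoplus_\alpha B_\alpha\in\bigwedge^2(\mathfrak{n}^-)^\ast$ satisfies $(e^{-B}\mathbb{J}e^{B},\,e^{-B}\mathbb{J}'e^{B})=(\widetilde{\mathbb{J}},\widetilde{\mathbb{J}}')$. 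Since $B$ is block-diagonal this reduces to the root-space identity, which is precisely the computation behind \eqref{diagonalBaction0}: by Corollary \ref{Kahlerroots} one member of $(\mathbb{J}_\alpha,\mathbb{J}'_\alpha)$ is of complex type $\pm\mathcal{J}_0$, a fixed point of every $B$-transformation by Proposition \ref{BFieldComplex}, while the noncomplex member is carried to its associated symplectic structure by the $B_\alpha=-\tfrac{a_\alpha}{x_\alpha}S_\alpha^\ast\wedge A_\alpha^\ast$ of Proposition \ref{BFieldSymplectic}, and composing these $B_\alpha$ as in Corollary \ref{BField2} matches the two pairs.

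Granting the diagonal lemma, injectivity follows immediately: if $\Phi[(\mathbb{J},\mathbb{J}')]=\Phi[(\widetilde{\mathbb{J}},\widetilde{\mathbb{J}}')]$ then for each $\alpha$ the pairs $(\mathbb{J}_\alpha,\mathbb{J}'_\alpha)$ and $(\widetilde{\mathbb{J}}_\alpha,\widetilde{\mathbb{J}}'_\alpha)$ lie in one $\mathfrak{K}_\alpha(\mathbb{F})$-orbit, so some $B_\alpha$ relates them, and the lemma assembles a global invariant $B$ identifying $(\mathbb{J},\mathbb{J}')$ with $(\widetilde{\mathbb{J}},\widetilde{\mathbb{J}}')$. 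Hence $\Phi$ is bijective and $\mathfrak{K}_a(\mathbb{F})=\prod_{\alpha\in\Pi^+}\mathfrak{K}_\alpha(\mathbb{F})=(\mathbb{R}^\dag)^d$; the four branches of each factor $\mathbb{R}^\dag$ correspond exactly to the four rows of Corollary \ref{Kahlerroots} and are preserved under $B$-transformations, since these fix $\pm\mathcal{J}_0$ and do not change the parameter $x_\alpha$. I expect the main obstacle to be the diagonal $B$-transformation lemma, and within it the point that normalizing the noncomplex member of each pair by $B_\alpha$ leaves the complex-type member untouched; here Corollary \ref{Kahlerroots} is indispensable, as it guarantees that in every root space exactly one of the two structures is of complex type and hence a $B$-fixed point, so the two normalizations never interfere.
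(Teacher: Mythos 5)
Your proposal is correct and follows essentially the same route the paper takes (and leaves mostly implicit): decompose the pair blockwise over the root spaces, invoke Corollary \ref{Kahlerroots} to identify each factor $\mathfrak{K}_\alpha(\mathbb{F})$ with $\mathbb{R}^\dag$, and assemble the block-diagonal $B$-transformations exactly as in Theorem \ref{GlobalBField} and the computation behind \eqref{diagonalBaction0}. Your explicit justification that invariant $2$-forms are automatically block-diagonal, and your observation that one member of each pair is a $B$-fixed point of complex type, are precisely the points the paper relies on without spelling out.
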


The topology on $\mathfrak K_a (\mathbb F)$ inherited from $\mathbb R^n$ coincides  with the   topology induced from the product 
 $\mathfrak{M}_a(\mathbb{F})\times \mathfrak{M}_a(\mathbb{F})$.

\subsection{Integrability}
By Definition \ref{ke}, when we require that both $\mathbb{J}$ and $\mathbb{J}'$ be integrable we get that the pair $(\mathbb{J},\mathbb{J}')$ determines an invariant generalized K\"ahler structure on $\mathbb{F}$. Thus, to determine which of those invariant generalized almost K\"ahler structures given in Proposition \ref{Kahlerroots} are in fact invariant generalized K\"ahler structures we need to use Table \ref{integrab}.

\begin{remark}\label{triplekahler}
Let $(\mathbb{J},\mathbb{J}')$ be an invariant generalized Kähler structure on $\mathbb{F}$. Since $\mathbb{J}$ is an invariant generalized complex structure on $\mathbb{F}$, then for each triple of roots $(\alpha,\beta,\alpha+\beta)$, we have that the only possibilities for $(\mathbb{J}_\alpha, \mathbb{J}_\beta, \mathbb{J}_{\alpha+\beta})$ are those described in Table \ref{integrab}. Since for each positive root $\alpha$, the pair $(\mathbb{J}_\alpha,\mathbb{J}_\alpha ')$ must be formed by one structure of complex type and the other one of noncomplex type, we observe that:
\begin{itemize}
 \item the possible values for  $\pm(\mathbb{J}_\alpha, \mathbb{J}_\beta,\mathbb{J}_{\alpha+\beta})$ 
 are:
 
 either  $ (\mathcal{J}_\alpha^{+},\mathcal{J}_0,\mathcal{J}_0)$, or $(\mathcal{J}_0,\mathcal{J}_\beta^{+},\mathcal{J}_0)$, or else  $ (\mathcal{J}_0,-\mathcal{J}_0,\mathcal{J}_{\alpha+\beta}^{+})$ 

\item with corresponding values for 
$\pm (\mathbb{J}' _\alpha, \mathbb{J}' _\beta,\mathbb{J}' _{\alpha+\beta})$, respectively:

either  $ (\mathcal{J}_0,(\mathcal{J}_\beta')^{+},(\mathcal{J}_{\alpha+\beta}')^{+})$, or  $ ((\mathcal{J}_\alpha')^{+} ,\mathcal{J}_0,(\mathcal{J}_{\alpha+\beta}')^{+})$, or else  $((\mathcal{J}_\alpha')^{+},(\mathcal{J}_\beta')^{-},\mathcal{J}_0)$.
\end{itemize}
 However, note that for these cases $\mathbb{J}'$ is not integrable. Therefore, in the other cases of Table \ref{integrab}, if we have that $(\mathbb{J}_\alpha, \mathbb{J}_\beta,\mathbb{J}_{\alpha+\beta})$ is purely of complex type (resp. of noncomplex type), then $(\mathbb{J}' _\alpha, \mathbb{J}' _\beta,\mathbb{J}' _{\alpha+\beta})$ must be purely of noncomplex type (resp. of complex type).
\end{remark}

\begin{lemma}\label{simplerootK}
Let $(\mathbb{J},\mathbb{J} ')$ be an invariant generalized Kähler structure on $\mathbb{F}$ and $\Sigma$ be a simple root system for $\mathfrak{g}$. If $\mathbb{J}_{\alpha_{i}}$ is of noncomplex type for some simple root $\alpha_i\in \Sigma$, then $\mathbb{J}_{\alpha_j}$ is of noncomplex type for all simple roots $\alpha_j\in \Sigma$. 
\end{lemma}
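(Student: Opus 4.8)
The plan is to combine the triple-by-triple analysis already recorded in Remark~\ref{triplekahler} with connectedness of the Dynkin diagram of $\mathfrak g$. Recall that, for a generalized K\"ahler pair $(\mathbb J,\mathbb J')$, Corollary~\ref{Kahlerroots} forces exactly one of $\mathbb J_\alpha,\mathbb J'_\alpha$ to be of complex type and the other of noncomplex type at each positive root $\alpha$, and that Remark~\ref{triplekahler} then shows that for every triple of positive roots $(\alpha,\beta,\alpha+\beta)$ the triple $(\mathbb J_\alpha,\mathbb J_\beta,\mathbb J_{\alpha+\beta})$ is either of complex type at all three roots or of noncomplex type at all three roots. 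Indeed, each of the mixed patterns permitted for $\mathbb J$ by Table~\ref{integrab} (namely $(\mathcal J_\alpha,\mathcal J_0,\mathcal J_0)$, $(\mathcal J_0,\mathcal J_\beta,\mathcal J_0)$, $(\mathcal J_0,-\mathcal J_0,\mathcal J_{\alpha+\beta})$, up to sign) would force the corresponding triple for $\mathbb J'$, obtained by interchanging complex and noncomplex type at each of the three roots, to be a pattern that does not appear in Table~\ref{integrab}, contradicting integrability of $\mathbb J'$.

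The key step is the following. Suppose $\mathbb J_{\alpha_i}$ is of noncomplex type and let $\alpha_j\in\Sigma$ be a simple root adjacent to $\alpha_i$ in the Dynkin diagram, so that $\alpha_i+\alpha_j$ is a positive root. Applying the dichotomy above to the triple $(\alpha_i,\alpha_j,\alpha_i+\alpha_j)$: since $\mathbb J_{\alpha_i}$ is of noncomplex type, the triple $(\mathbb J_{\alpha_i},\mathbb J_{\alpha_j},\mathbb J_{\alpha_i+\alpha_j})$ cannot be of complex type at all three roots, hence it is of noncomplex type at all three roots, and in particular $\mathbb J_{\alpha_j}$ is of noncomplex type. (Equivalently, without invoking Remark~\ref{triplekahler}: if $\mathbb J_{\alpha_j}$ were of complex type, then $\mathbb J'_{\alpha_i}$ would be of complex type and $\mathbb J'_{\alpha_j}$ of noncomplex type; Table~\ref{integrab} applied to the $\mathbb J$-triple, whose first two entries are noncomplex and complex, forces $\mathbb J_{\alpha_i+\alpha_j}$ of complex type, hence $\mathbb J'_{\alpha_i+\alpha_j}$ of noncomplex type, so that $(\mathbb J'_{\alpha_i},\mathbb J'_{\alpha_j},\mathbb J'_{\alpha_i+\alpha_j})$ is of the pattern $(\textnormal{complex},\textnormal{noncomplex},\textnormal{noncomplex})$, which is absent from Table~\ref{integrab}, a contradiction.)

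Finally, since any two simple roots in $\Sigma$ are joined by a chain of pairwise-adjacent simple roots, iterating the key step along such a chain shows that $\mathbb J_{\alpha_j}$ is of noncomplex type for every $\alpha_j\in\Sigma$. There is no real obstacle here: the combinatorial content is all in Table~\ref{integrab} and Remark~\ref{triplekahler}, and the only structural input is connectedness of the Dynkin diagram, so the statement holds as written when $\mathfrak g$ is simple and component by component for a general semisimple $\mathfrak g$.
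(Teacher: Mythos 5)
Your proof is correct and follows essentially the same route as the paper's: the dichotomy of Remark~\ref{triplekahler} applied to triples $(\alpha_i,\alpha_j,\alpha_i+\alpha_j)$ of positive roots with $\alpha_i,\alpha_j$ simple, followed by propagation from one simple root to the next. The only real difference is organizational: where the paper argues type by type over the Cartan classification (listing the height-two roots for $A$, $B$, $C$, then handling the branch node for $D$ and the exceptional types separately), you observe uniformly that two simple roots sum to a root exactly when they are adjacent in the Dynkin diagram and then invoke connectedness of the diagram, which is a genuine streamlining; your caveat that the statement as written requires $\mathfrak g$ simple (or must be read component by component for general semisimple $\mathfrak g$) is also well taken, since the paper's phrasing glosses over this.
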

\begin{proof}
We separate the proof in 2 cases according to types of the  semisimple Lie algebra. Let $\mathfrak{g}$ be a semisimple Lie algebra of type $A$, $B$ or $C$. If $\Sigma = \{ \alpha_1, \alpha_2, \cdots, \alpha_l\}$ is a simple root system, then we get that the roots of height 2 are given by $\alpha_1+\alpha_2$, $\alpha_2+\alpha_3$, $\cdots$, $\alpha_{l-1}+\alpha_l$. Suppose that $\mathbb{J}_{\alpha_i}$ is of noncomplex type for some $\alpha_i\in \Sigma$. By Remark \ref{triplekahler}, given the triple of positive roots $(\alpha_i,\alpha_{i+1},\alpha_i+\alpha_{i+1})$, we have that $(\mathbb{J}_{\alpha_i}, \mathbb{J}_{\alpha_{i+1}},\mathbb{J}_{\alpha_i + \alpha_{i+1}})$ is purely of noncomplex type. Again, since $\mathbb{J}_{\alpha_{i+1}}$ is of noncomplex type as well and we have the triple of positive roots $(\alpha_{i+1},\alpha_{i+2},\alpha_{i+1}+\alpha_{i+2})$, by Remark \ref{triplekahler}, we get that $(\mathbb{J}_{\alpha_{i+1}}, \mathbb{J}_{\alpha_{i+2}},\mathbb{J}_{\alpha_{i+1} + \alpha_{i+2}})$ is purely of noncomplex type. The same reasoning can be used with the triple of positive roots $(\alpha_{i-1},\alpha_i,\alpha_{i-1}+\alpha_i)$. Therefore, an iterative process allows us to show that for all $\alpha_j \in \Sigma$ we obtain that $\mathbb{J}_{\alpha_j}$ is of noncomplex type. The same arguments can be used for the Lie algebras of type $F_4$ and $G_2$.

Let us now assume that $\mathfrak{g}$ is a semisimple Lie algebra of type $D$. If $\Sigma = \{ \alpha_1, \alpha_2, \cdots, \alpha_l\}$ is a simple root system,  in case of type $D$ we have that the roots of height 2 are $\alpha_1+\alpha_2$, $\alpha_2+\alpha_3$, $\cdots$, $\alpha_{l-2}+\alpha_{l-1}$, $\alpha_{l-2} +\alpha_{l}$. Suppose that $\mathbb{J}_{\alpha_i}$ is of noncomplex type for some $\alpha_{i}\in \Sigma$. Note that we just need to consider separately  if $\alpha_i$ is either one of the roots $\alpha_{l-1}$ or $\alpha_{l}$, because the other cases are analogous to the previous one. Suppose that $\mathbb{J}_{\alpha_i}=\mathbb{J}_{\alpha_{l-1}}$. Thus, given the triple of positive roots $(\alpha_{l-2},\alpha_{l-1},\alpha_{l-2}+\alpha_{l-1})$, as consequence of Remark \ref{triplekahler} we have that $(\mathbb{J}_{\alpha_{l-2}},\mathbb{J}_{\alpha_{l-1}},\mathbb{J}_{\alpha_{l-2}+\alpha_{l-1}})$ must be purely of noncomplex type. By the same reason, if we take the triple of positive roots $(\alpha_{l-2},\alpha_{l},\alpha_{l-2}+\alpha_{l})$ then $(\mathbb{J}_{\alpha_{l-2}},\mathbb{J}_{\alpha_{l}},\mathbb{J}_{\alpha_{l-2}+\alpha_{l}})$ must be purely of noncomplex type. Therefore, as $\mathbb{J}_{\alpha_{l-2}}$ is of noncomplex type, using the same arguments as we did in the case $A$, we get that $\mathbb{J}_{\alpha_{j}}$ for all $j=1,\cdots, l-3$ are of noncomplex type. For the case where $\mathbb{J}_{\alpha_l}$ is of noncomplex type the arguments are analogous. The cases of Lie algebras of type $E_6$, $E_7$ and $E_8$ can be reasoned as we did with the case $D$.
\end{proof}

\begin{lemma}\label{simplerootK2}
Let $(\mathbb{J},\mathbb{J} ')$ be an invariant generalized Kähler structure on $\mathbb{F}$ and $\Sigma$ be a simple root system for $\mathfrak{g}$. If $\mathbb{J}_{\alpha_i}$ is of noncomplex type for some simple root $\alpha_i\in \Sigma$, then $\mathbb{J}_{\alpha}$ is of noncomplex type for every positive root $\alpha\in \Pi^+$. 
\end{lemma}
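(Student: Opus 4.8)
The plan is to bootstrap from Lemma~\ref{simplerootK} and then propagate ``noncomplex type'' from the simple roots to all of $\Pi^{+}$ by induction on the height of a positive root, invoking the K\"ahler hypothesis only through the dichotomy already isolated in Remark~\ref{triplekahler}.

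First I would record the consequence of Remark~\ref{triplekahler} that does all the work: for a generalized K\"ahler pair $(\mathbb{J},\mathbb{J}')$ the ``mixed'' rows of Table~\ref{integrab} — namely $(\mathcal{J}_\alpha,\mathcal{J}_0,\mathcal{J}_0)$, $(\mathcal{J}_0,\mathcal{J}_\beta,\mathcal{J}_0)$ and $(\mathcal{J}_0,-\mathcal{J}_0,\mathcal{J}_{\alpha+\beta})$, up to an overall sign — cannot occur, since in each of those cases the companion structure $\mathbb{J}'$ fails to be integrable. Consequently, for \emph{every} triple of positive roots $(\alpha,\beta,\alpha+\beta)$, the triple $(\mathbb{J}_\alpha,\mathbb{J}_\beta,\mathbb{J}_{\alpha+\beta})$ is \emph{either} purely of complex type \emph{or} purely of noncomplex type. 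This is the only point at which the K\"ahler condition enters; after it, only integrability of $\mathbb{J}$ and root combinatorics are used.

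Next comes the induction on height. The base case, roots of height $1$, is exactly Lemma~\ref{simplerootK}: since $\mathbb{J}_{\alpha_i}$ is of noncomplex type for some $\alpha_i\in\Sigma$, that lemma forces $\mathbb{J}_{\alpha_j}$ to be of noncomplex type for all simple roots $\alpha_j\in\Sigma$. For the inductive step, let $\gamma\in\Pi^{+}$ have height $h\ge 2$. Then $\gamma$ is not simple, so by the standard structure theory of root systems there are a simple root $\alpha\in\Sigma$ and a positive root $\beta\in\Pi^{+}$ of height $h-1$ with $\gamma=\alpha+\beta$. By the inductive hypothesis $\mathbb{J}_\beta$ is of noncomplex type; applying the dichotomy above to the triple $(\alpha,\beta,\gamma)$, the triple $(\mathbb{J}_\alpha,\mathbb{J}_\beta,\mathbb{J}_{\gamma})$ cannot be purely of complex type, hence is purely of noncomplex type, and in particular $\mathbb{J}_\gamma$ is of noncomplex type. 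Iterating over all heights gives the assertion for every $\alpha\in\Pi^{+}$.

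I expect no serious obstacle: the genuine content sits in Lemma~\ref{simplerootK} and in the case analysis of Remark~\ref{triplekahler}, and both are already in hand. The only points needing a word of care are that negating a structure of complex (resp. noncomplex) type leaves it of complex (resp. noncomplex) type — so the overall sign on the rows of Table~\ref{integrab} does not affect the type pattern — and that the decomposition $\gamma=\alpha+\beta$ with $\alpha\in\Sigma$ and $\beta\in\Pi^{+}$ is available uniformly for every positive root of height $\ge 2$ across all Dynkin types; this is standard and requires no case-by-case verification.
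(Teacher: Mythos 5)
Your proof is correct and follows essentially the same route as the paper: decompose a positive root of height $n\geq 2$ as a simple root plus a root of height $n-1$, and use the dichotomy from Remark~\ref{triplekahler} (mixed triples are excluded by integrability of $\mathbb{J}'$) to propagate noncomplex type upward from the simple roots, which are handled by Lemma~\ref{simplerootK}. The only cosmetic difference is that you run a formal induction on height and invoke the inductive hypothesis on the height-$(n-1)$ summand, whereas the paper applies the dichotomy directly using the simple-root summand already known to be of noncomplex type; both arguments are equivalent.
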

\begin{proof}
As consequence of Lemma \ref{simplerootK}, we have that if  $\mathbb{J}_{\alpha_i}$ is of noncomplex type for some simple root $\alpha_i\in \Sigma$, then $\mathbb{J}_{\alpha_j}$ is of noncomplex type for all simple root $\alpha_j\in \Sigma$. If $\alpha$ is a positive root of height $n$, it can be written as $\alpha = \beta +\gamma$, where $\beta$ is a root of height $1$ (that is, a simple root) and $\gamma$ is a root of height $n-1$. Thus, we get a triple of positive roots $(\beta, \gamma,\beta+\gamma=\alpha)$.  Since $\mathbb{J}_\beta$ is of noncomplex type, by Remark \ref{triplekahler}, we have that $(\mathbb{J}_\beta, \mathbb{J}_\gamma, \mathbb{J}_\alpha)$ must be purely of noncomplex type. Therefore, $\mathbb{J}_\alpha$ is of noncomplex type for all $\alpha\in \Pi^+$, as desired.
\end{proof}

\begin{theorem}\label{KahlerClassification}
Let $(\mathbb{J},\mathbb{J} ')$ be an invariant generalized Kähler structure on $\mathbb{F}$. If $\mathbb{J}_{\alpha}$ is of noncomplex type for some positive root $\alpha$, then $\mathbb{J}_{\beta}$ is of noncomplex type for every positive root $\beta$. In particular, every invariant generalized Kähler structure on $\mathbb{F}$  consists  of a complex structure and a $B$-transformation of a symplectic structure.
\end{theorem}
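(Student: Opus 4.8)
The plan is to bootstrap from Lemma~\ref{simplerootK2}, which already propagates noncomplex type from a single \emph{simple} root to all of $\Pi^+$. Hence the only genuinely new step is to show that if $\mathbb{J}_\alpha$ is of noncomplex type for \emph{some} positive root $\alpha$, then $\mathbb{J}_\beta$ is of noncomplex type for at least one simple root $\beta$; Lemma~\ref{simplerootK2} then yields the first assertion of the theorem.

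To produce such a simple $\beta$ I would descend on the height of $\alpha$. If $\alpha\in\Sigma$ there is nothing to do. If $\alpha$ is not simple, write $\alpha=\beta+\gamma$ with $\beta\in\Sigma$ and $\gamma\in\Pi^+$ of height one less — the standard descent property of positive roots already used in the proof of Lemma~\ref{simplerootK2} — so that $(\beta,\gamma,\alpha)$ is a triple of positive roots summing correctly. By Remark~\ref{triplekahler}, for a genuine generalized K\"ahler pair the triple $(\mathbb{J}_\beta,\mathbb{J}_\gamma,\mathbb{J}_{\beta+\gamma})$ is forced to be \emph{purely} of complex type or \emph{purely} of noncomplex type, the mixed rows of Table~\ref{integrab} being excluded because they would make $\mathbb{J}'$ non-integrable. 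Since $\mathbb{J}_{\beta+\gamma}=\mathbb{J}_\alpha$ is of noncomplex type, the triple is purely of noncomplex type, so in particular $\mathbb{J}_\beta$ is of noncomplex type with $\beta$ simple, and Lemma~\ref{simplerootK2} finishes the first claim.

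For the ``in particular'' part I would distinguish two cases according to $\mathbb{J}$. If $\mathbb{J}_\alpha$ is of noncomplex type for some $\alpha\in\Pi^+$, the first part forces $\mathbb{J}_\beta$ to be of noncomplex type for every $\beta\in\Pi^+$; then Corollary~\ref{Kahlerroots}, which allows only pairs $(\mathbb{J}_\beta,\mathbb{J}'_\beta)$ consisting of one complex-type and one noncomplex-type member, forces $\mathbb{J}'_\beta=\pm\mathcal{J}_0$ for all $\beta$, so $\mathbb{J}'$ is of complex type (hence, being integrable, a complex structure in the sense of Example~\ref{ExampleC}), while $\mathbb{J}$ is, by Proposition~\ref{BFieldSymplectic}, a $B$-transform of a structure of symplectic type. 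If instead no $\mathbb{J}_\alpha$ is of noncomplex type, then every $\mathbb{J}_\alpha=\pm\mathcal{J}_0$ by Remark~\ref{canonical}, and Corollary~\ref{Kahlerroots} now forces every $\mathbb{J}'_\alpha$ to be of noncomplex type; one concludes exactly as before with the roles of $\mathbb{J}$ and $\mathbb{J}'$ interchanged. In either case the pair consists of a complex structure together with a $B$-transform of a symplectic structure.

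The one point requiring care is the invocation of Remark~\ref{triplekahler}: one must check that the exclusion of the mixed rows of Table~\ref{integrab} applies to the height-reducing triple $(\beta,\gamma,\alpha)$ and not merely to triples of simple roots, and that $\gamma=\alpha-\beta$ can always be chosen inside $\Pi^+$. Both are routine — the first because the argument in Remark~\ref{triplekahler} is phrased for an arbitrary triple $(\alpha,\beta,\alpha+\beta)$ of positive roots, the second being the classical descent property for positive roots — so beyond making the case analysis airtight I foresee no essential obstacle.
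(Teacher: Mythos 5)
Your proposal is correct and follows essentially the same route as the paper's own proof: a single height-descent step $\alpha=\beta+\gamma$ with $\beta$ simple, Remark~\ref{triplekahler} to force the triple $(\mathbb{J}_\beta,\mathbb{J}_\gamma,\mathbb{J}_\alpha)$ to be purely of noncomplex type, and then Lemma~\ref{simplerootK2}. Your treatment of the ``in particular'' clause is merely a more explicit version of the paper's one-line appeal to Theorem~\ref{GlobalBField}.
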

\begin{proof}
Let $\alpha$ be a positive root such that $\mathbb{J}_\alpha$ is of noncomplex type. If $\alpha$ is a simple root, then the result immediately follows from Lemma \ref{simplerootK2}. Suppose now that $\alpha$ is a positive root with height $n \geq 2$. Thus, we have that $\alpha$ can be written as $\alpha = \beta + \gamma$, where $\beta$ is a root of height $1$ (that is, a simple root) and $\gamma$ is a root of height $n-1$. Therefore, we get a triple of positive roots $(\beta, \gamma,\beta+\gamma=\alpha)$. Since $\mathbb{J}_\alpha$ is of noncomplex type, by Remark \ref{triplekahler} we deduce that $(\mathbb{J}_\beta, \mathbb{J}_\gamma, \mathbb{J}_\alpha)$ must be purely of noncomplex type. Hence, $\mathbb{J}_\beta$ is of noncomplex type and given that $\beta$ is a simple root the result follows from Lemma \ref{simplerootK2}.

In particular, using this fact and Theorem \ref{GlobalBField} it is simple to conclude that every invariant generalized Kähler structure on $\mathbb{F}$ is always formed by a structure of complex type and a $B$-transformation of a structure of symplectic type.
\end{proof}

Following the discussion in Section \ref{S:4}, we can define a natural action of the Weyl group $\mathcal{W}$ on the set of all invariant generalized almost K\"ahler structures on $\mathbb{F}$ as follows. For each $w\in \mathcal{W}$ and $(\mathbb{J},\mathbb{J}')\in\mathcal{K}_a$ we define
$$w\cdot (\mathbb{J},\mathbb{J}')=(w\cdot \mathbb{J},w\cdot \mathbb{J}').$$
It is simple to check that $w\cdot (\mathbb{J},\mathbb{J}')$ is  well defined, that is, it defines another invariant generalized K\"ahler structure on $\mathbb{F}$ and by Proposition \ref{WeylIntegrability}:
\begin{proposition}\label{WeylKahler}
	Let $\Sigma$ be a simple root system for $\mathfrak{g}$ with $\Pi^+$ the corresponding set of positive roots. If $(\mathbb{J},\mathbb{J}')$ is an invariant generalized K\"ahler structure on $\mathbb{F}$, then $w\cdot (\mathbb{J},\mathbb{J}')$ defines another invariant generalized K\"ahler structure on $\mathbb{F}$.
\end{proposition}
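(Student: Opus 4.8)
\section*{Proof proposal}

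The plan is to check directly that $w\cdot(\mathbb{J},\mathbb{J}')=(w\cdot\mathbb{J},w\cdot\mathbb{J}')$ satisfies each clause of Definition \ref{ke}. Throughout I would fix a representative $w\in N_U(\mathfrak{h})$ and abbreviate $R:=(\Ad\oplus\Ad^\ast)(w)$, so that $w\cdot\mathbb{J}=R\,\mathbb{J}\,R^{-1}$ and $w\cdot\mathbb{J}'=R\,\mathbb{J}'\,R^{-1}$ as maps of $\mathfrak{n}^-\oplus(\mathfrak{n}^-)^\ast$.

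First I would dispose of the two clauses that are essentially already known. Integrability of both $w\cdot\mathbb{J}$ and $w\cdot\mathbb{J}'$, together with the fact that they are again $T$-invariant generalized complex structures of the same type, is precisely Proposition \ref{WeylIntegrability} applied to $\mathbb{J}$ and to $\mathbb{J}'$ (both of which are integrable by hypothesis). For the commutation relation, note that conjugation by $R$ is an algebra automorphism of $\End(\mathfrak{n}^-\oplus(\mathfrak{n}^-)^\ast)$; hence $(w\cdot\mathbb{J})(w\cdot\mathbb{J}')=R\,\mathbb{J}\mathbb{J}'\,R^{-1}=R\,\mathbb{J}'\mathbb{J}\,R^{-1}=(w\cdot\mathbb{J}')(w\cdot\mathbb{J})$, using $\mathbb{J}\mathbb{J}'=\mathbb{J}'\mathbb{J}$. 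The same computation gives $-(w\cdot\mathbb{J})(w\cdot\mathbb{J}')=R\,G\,R^{-1}$, where $G=-\mathbb{J}\mathbb{J}'$ is the generalized K\"ahler metric of $(\mathbb{J},\mathbb{J}')$.

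The remaining point — and the only one requiring any care — is to see that $R\,G\,R^{-1}$ is still positive definite, i.e.\ that $\langle R\,G\,R^{-1}v,\,v\rangle>0$ for every nonzero $v\in\mathfrak{n}^-\oplus(\mathfrak{n}^-)^\ast$. Here I would invoke the fact already used in Section \ref{S:4}, that $\Ad(w)$, and therefore $R$, is an isometry of the Cartan--Killing form, which under our identifications coincides with the indefinite inner product \eqref{innerproduct}. Then $\langle R\,G\,R^{-1}v,\,v\rangle=\langle G\,(R^{-1}v),\,R^{-1}v\rangle$, and since $R^{-1}v\neq 0$ and $\langle G\,\cdot,\,\cdot\rangle$ is positive definite, this is strictly positive. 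Combining the three clauses shows that $w\cdot(\mathbb{J},\mathbb{J}')$ is an invariant generalized K\"ahler structure; well-definedness (independence of the choice of representative $w$) follows as for the single Weyl action in Section \ref{S:4}, since a change of representative conjugates by an element of $T$, which commutes with the invariant pair. I do not expect a genuine obstacle: the entire content is the isometry property of $\Ad(w)$ recorded earlier, which transports positivity of $G$ through the conjugation.
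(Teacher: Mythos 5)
Your proposal is correct and follows exactly the route the paper intends: the paper's own justification is simply to invoke Proposition \ref{WeylIntegrability} for integrability and to remark that the remaining verifications (that conjugation by $(\Ad\oplus\Ad^\ast)(w)$ preserves the commutation relation and the positivity of $G=-\mathbb{J}\mathbb{J}'$, the latter because $\Ad(w)$ is an isometry of the inner product) are ``simple to check.'' You have merely written out those omitted details, and they are all accurate.
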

Let $\alpha$ be a positive root. Recall that if $\mathbb{J}_\alpha=\pm\mathcal{J}_0$ is of complex type then $\mathbb{J}_{-\alpha}=\mp\mathcal{J}_0$ and if $\mathbb{J}_\alpha=\mathcal{J}_\alpha$ is of noncomplex type with
$\mathcal{J}_\alpha={\tiny \left( 
	\begin{array}{cccc}
	a_\alpha & 0 & 0 & -x_\alpha\\ 
	0 &  a_\alpha& x_\alpha & 0\\
	0 & -y_\alpha & -a_\alpha & 0\\
	y_\alpha & 0 & 0 & -a_\alpha
	\end{array}%
	\right)}$ then $\mathbb{J}_{-\alpha}$ is also of noncomplex type where $a_{-\alpha}=a_\alpha$, $x_{-\alpha}=-x_\alpha$, and $y_{-\alpha}=-y_\alpha$ (see \cite{VS}). Thus, up to action by the Weyl group $\mathcal{W}$, an invariant generalized almost complex structure $\mathbb{J}$ on $\mathbb{F}$ satisfies:
\begin{enumerate}
	\item[$\iota.$] if $\mathbb{J}_\alpha$ is of noncomplex type then $x_\alpha>0$, or
	\item[$\iota\iota.$] if $\mathbb{J}_\alpha$ is of complex type then $\mathbb{J}_\alpha=\mathcal{J}_0$.
\end{enumerate}
Therefore, up to diagonal action by the Weyl group, we have that 
$$\mathfrak{K}_\alpha(\mathbb{F})/\mathcal{W}=(\lbrace \mathbf{0}\rbrace \times \mathbb{R}^+)\cup (\mathbb{R}^+\times \lbrace \mathbf{0}\rbrace).$$
So,
\begin{proposition}
	Suppose that $\vert \Pi^+ \vert=d$. Then
	$$\mathfrak K_a (\mathbb F)/\mathcal{W}=((\lbrace \mathbf{0}\rbrace \times \mathbb{R}^+)\cup (\mathbb{R}^+\times \lbrace \mathbf{0}\rbrace))_{\alpha_1}\times \cdots \times ((\lbrace \mathbf{0}\rbrace \times \mathbb{R}^+)\cup (\mathbb{R}^+\times \lbrace \mathbf{0}\rbrace))_{\alpha_d}.$$
\end{proposition}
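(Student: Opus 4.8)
The plan is to bootstrap from Theorem~\ref{BmoduliK2}, which identifies $\mathfrak K_a(\mathbb F)$ with $\prod_{j=1}^{d}\mathbb R^\dag_{\alpha_j}$, so that a class is a tuple $(v_1,\dots,v_d)$ with $v_j\in\mathbb R^\dag$ recording, via Corollary~\ref{Kahlerroots}, which of the four rows the pair $(\mathbb J_{\alpha_j},\mathbb J'_{\alpha_j})$ realizes. First I would make the induced $\mathcal W$-action on this product explicit: for $w\in\mathcal W$ the $\alpha_j$-slot of $w\cdot(\mathbb J,\mathbb J')$ is $(\mathbb J_{w^{-1}\cdot\alpha_j},\mathbb J'_{w^{-1}\cdot\alpha_j})$, and if $\alpha_k$ denotes the positive root among $\pm\, w^{-1}\cdot\alpha_j$, then the sign rules $a_{-\alpha}=a_\alpha$, $x_{-\alpha}=-x_\alpha$ and $\mathbb J_{-\alpha}=\mp\mathcal J_0$ for $\mathbb J_\alpha=\pm\mathcal J_0$ (recalled just before this proposition) show that this slot equals $v_k$ when $w^{-1}\cdot\alpha_j\in\Pi^+$ and equals $\tau(v_k)$ otherwise, where $\tau$ is the fixed-point-free involution of $\mathbb R^\dag$ interchanging $\lbrace\mathbf{0}\rbrace\times\mathbb R^+\leftrightarrow\lbrace-\mathbf{0}\rbrace\times\mathbb R^-$ and $\mathbb R^+\times\lbrace\mathbf{0}\rbrace\leftrightarrow\mathbb R^-\times\lbrace-\mathbf{0}\rbrace$. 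Hence $\mathcal W$ acts on $\prod_j\mathbb R^\dag_{\alpha_j}$ through the signed permutations coming from its action on $\Pi^+$, and $(\lbrace\mathbf{0}\rbrace\times\mathbb R^+)\cup(\mathbb R^+\times\lbrace\mathbf{0}\rbrace)$ is exactly a transversal for $\mathbb R^\dag/\tau$, its $\tau$-image being the disjoint set $(\lbrace-\mathbf{0}\rbrace\times\mathbb R^-)\cup(\mathbb R^-\times\lbrace-\mathbf{0}\rbrace)$.

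Next I would prove that every $\mathcal W$-orbit meets $\big((\lbrace\mathbf{0}\rbrace\times\mathbb R^+)\cup(\mathbb R^+\times\lbrace\mathbf{0}\rbrace)\big)^d$. Given an invariant generalized almost K\"ahler structure $(\mathbb J,\mathbb J')$, apply the Weyl element that normalizes $\mathbb J$ as in Remark~\ref{x>0} and the discussion preceding this proposition, so that for every $\alpha\in\Pi^+$ either $\mathbb J_\alpha=\mathcal J_0$ or $\mathbb J_\alpha$ is of noncomplex type with $x_\alpha>0$; a single $w$ does this because it is dictated by the single positive system $\Pi^+_{\mathbb J}$. By Proposition~\ref{WeylKahler}, $w\cdot(\mathbb J,\mathbb J')$ is again an invariant generalized almost K\"ahler structure, so each slot is one of the four rows of Corollary~\ref{Kahlerroots}; the normalization of $\mathbb J$ kills the two rows having $\mathbb J_\alpha=-\mathcal J_0$ and $\mathbb J_\alpha=\mathcal J_\alpha^-$, leaving only $(\mathcal J_0,(\mathcal J'_\alpha)^+)\in\lbrace\mathbf{0}\rbrace\times\mathbb R^+$ or $(\mathcal J_\alpha^+,\mathcal J_0)\in\mathbb R^+\times\lbrace\mathbf{0}\rbrace$, so $\mathbb J'$ is automatically normalized as well. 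Thus $w\cdot(\mathbb J,\mathbb J')$ lands in the positive half of $\mathbb R^\dag$ in every slot simultaneously.

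Finally I would establish rigidity: if $(v_j)$ and $(v'_j)$ both lie in $\big((\lbrace\mathbf{0}\rbrace\times\mathbb R^+)\cup(\mathbb R^+\times\lbrace\mathbf{0}\rbrace)\big)^d$ and $w\cdot(v_j)=(v'_j)$, then by the slot description each $v'_j$ is $v_k$ or $\tau(v_k)$; since $\tau$ carries the positive half into the disjoint negative half, no slot can be a $\tau$-image, hence $w^{-1}\cdot\alpha_j\in\Pi^+$ for all $j$, i.e. $w^{-1}\cdot\Pi^+=\Pi^+$, and since $\mathcal W$ acts simply transitively on positive systems this forces $w=e$ and $(v_j)=(v'_j)$. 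Combining the last two steps, the map sending a tuple of positive-half values to its $\mathcal W$-orbit is a bijection onto $\mathfrak K_a(\mathbb F)/\mathcal W$, yielding the claimed product decomposition. I expect the main obstacle to be the slot-by-slot bookkeeping of the first step — pinning down that passing a factor from $\beta$ to $-\beta$ is realized on $\mathbb R^\dag$ precisely by $\tau$ and that $\tau$ exchanges the two halves of $\mathbb R^\dag$ — after which surjectivity (one Weyl element normalizes all slots at once) and injectivity (simple transitivity on positive systems) follow with no further work.
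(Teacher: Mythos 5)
Your argument follows the same strategy as the paper's (normalize $\mathbb{J}$ by a Weyl element so that every slot lands in the positive half of $\mathbb{R}^\dag$, then check that the positive-half product is a transversal), but it is considerably more careful: the explicit description of the induced $\mathcal{W}$-action on $\prod_j\mathbb{R}^\dag_{\alpha_j}$ via the involution $\tau$, and the injectivity argument via simple transitivity of $\mathcal{W}$ on positive systems, supply details the paper omits entirely (the paper quotients each factor separately, which is not legitimate when the group permutes the factors). Those two steps of yours are correct.

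The gap is in the surjectivity step, and it is inherited from the paper. You assert that a single $w$ normalizes all slots ``because it is dictated by the single positive system $\Pi^+_{\mathbb{J}}$''. But the paper only establishes that $\Pi^+_{\mathbb{J}}$ is a positive system when $\mathbb{J}$ is \emph{integrable} (Theorem \ref{theta}, Remark \ref{x>0}), whereas the proposition concerns $\mathfrak{K}_a$, i.e.\ \emph{almost} K\"ahler structures, for which the four rows of Corollary \ref{Kahlerroots} may be chosen independently in each slot. Concretely, for $\mathfrak{sl}(3,\mathbb{C})$ take $(\mathbb{J}_\alpha,\mathbb{J}'_\alpha)$ and $(\mathbb{J}_\beta,\mathbb{J}'_\beta)$ in the positive half and $(\mathbb{J}_{\alpha+\beta},\mathbb{J}'_{\alpha+\beta})=(-\mathcal{J}_0,(\mathcal{J}'_{\alpha+\beta})^-)$ in the negative half; this is a valid invariant generalized almost K\"ahler structure. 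By your own slot description, a $w$ moving its class into the positive-half product would need $w^{-1}\cdot\alpha$ and $w^{-1}\cdot\beta$ positive but $w^{-1}\cdot(\alpha+\beta)=w^{-1}\cdot\alpha+w^{-1}\cdot\beta$ negative, which is impossible; equivalently, the sign pattern $\lbrace\alpha,\beta,-(\alpha+\beta)\rbrace$ is not closed under addition, hence is not of the form $w^{-1}\cdot\Pi^+$. So that orbit never meets your transversal and surjectivity fails for $\mathfrak{K}_a$ as stated. Your proof becomes complete if $\mathfrak{K}_a$ is replaced by the integrable moduli (where Theorem \ref{theta} guarantees $\Pi^+_{\mathbb{J}}$ is a positive system), or if one restricts to almost structures whose sign pattern is a positive system; as written, the normalization claim is exactly the unproved point, in your write-up and in the paper's.
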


\subsection{Moduli space of generalized metrics}
We now discuss invariant generalized metrics on $\mathbb{F}$. Recall that if $(\mathbb{J},\mathbb{J}')$ is a generalized K\"ahler structure on a manifold $M$ with generalized metric $G$ and $B\in \Omega^2(M)$ then $e^B\cdot (\mathbb{J},\mathbb{J}')$ is another generalized K\"ahler structure if and only if $B$ is closed. Moreover, the generalized metric associated to $e^B\cdot (\mathbb{J},\mathbb{J}')$ is given by $G_B=e^{-B}G e^B$. If we denote the set of all invariant generalized metric on $\mathbb{F}$ by $\mathcal{G}$, we get that the expression $e^B\cdot G:=e^{-B}G e^B$ allows us to obtain a well-defined action of $\mathcal{B}$ on $\mathcal{G}$. 
\begin{definition}
The {\it moduli space of generalized metrics} $\mathfrak{G}$ on $M$ is defined as the quotient space $\mathfrak{G}:=\mathcal{G}/\mathcal{B}$ determined by the action of $\mathcal{B}$ on $\mathcal{G}$.
\end{definition}
Let $\alpha$ be a positive root and $(\mathbb{J},\mathbb{J}')$ be an invariant generalized  K\"ahler structure on $\mathbb{F}$. Thus, $(\mathbb{J}_\alpha,\mathbb{J}'_\alpha)$ is formed by one structure of complex type and the other one of noncomplex type. Without loss of generality, let us assume that $\mathbb{J}_{\alpha}=\pm \mathcal{J}_0$ and $\mathbb{J}_{\alpha}'={\tiny\left( 
	\begin{array}{cccc}
	a_\alpha & 0 & 0 & -x_\alpha\\ 
	0 &  a_\alpha& x_\alpha & 0\\
	0 & -y_\alpha & -a_\alpha & 0\\
	y_\alpha & 0 & 0 & -a_\alpha
	\end{array}%
	\right)}$ with $a_\alpha^2=x_\alpha y_\alpha -1$ and let $\mathbb{J}_{\omega_\alpha}'$ be its associated generalized complex structure of symplectic type; see Notation \ref{SymplecticAsso}. Then, for $B_\alpha=-\dfrac{a_\alpha}{x_\alpha}S_\alpha^\ast\wedge A_\alpha^\ast$, we have that
$$G_\alpha=-(\pm \mathcal{J}_0)\mathcal{J}_\alpha=\pm\left( 
\begin{array}{cccc}
0 & a_\alpha & x_\alpha & 0\\ 
-a_\alpha & 0& 0 & x_\alpha\\
y_\alpha & 0 & 0 & -a_\alpha\\
0 & y_\alpha & a_\alpha & 0
\end{array}%
\right)=e^{-B_\alpha} \left(\pm\left(
\begin{array}{cc}
0 & g_\alpha^{-1}\\
g_\alpha & 0
\end{array}%
\right)\right)e^{B_\alpha} $$
where $g_\alpha=\left( 
\begin{array}{cc}
1/x_\alpha & \\
0  & 1/x_\alpha
\end{array}%
\right)=\dfrac{1}{x_\alpha}\cdot I_2$ with the conditions that $x_\alpha>0$ if $\mathbb{J}_{\alpha}=\mathcal{J}_0$ and $x_\alpha<0$ otherwise. Therefore, the set  $\displaystyle \mathfrak{G}_\alpha(\mathbb{F})=\frac{\mathcal G_\alpha}{B}$ of equivalence classes of invariant generalized metrics on $\mathfrak{u}_\alpha$ modulo 
the action of $B$-transformations is given by
$$\mathfrak{G}_\alpha(\mathbb{F})=\mathbb{R}^\ast.$$
More importantly, since every invariant generalized metric can be written as $G=\displaystyle \bigoplus_{\alpha\in \Pi^+}G_\alpha$, we  get

\begin{proposition}\label{GereralizedMetrics}
Suppose that $\vert \Pi^+ \vert=d$. Then
$$\mathfrak G (\mathbb F)= \prod_{\alpha\in\Pi^+} \mathfrak{G}_\alpha(\mathbb{F})=\mathbb{R}^\ast_{\alpha_1}\times \cdots \times\mathbb{R}^\ast_{\alpha_d}=(\mathbb{R}^\ast)^d.$$
Moreover, up to action by the Weyl group $\mathcal{W}$, we have that $\mathfrak G (\mathbb F)/\mathcal{W}=(\mathbb{R}^+)^d$.
\end{proposition}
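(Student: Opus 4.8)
The plan is to deduce the global statement from the per-root computation carried out just before the statement, using the block-diagonal bookkeeping of Theorem~\ref{GlobalBField}. First I would record that, by invariance, an invariant generalized metric $G$ on $\mathbb{F}$ is determined by its value at $b_0$, and since it commutes with the adjoint $T$-action and $\mathfrak{n}^-=\bigoplus_{\alpha\in\Pi^+}\mathfrak{u}_\alpha$, it splits as $G=\bigoplus_{\alpha\in\Pi^+}G_\alpha$ with $G_\alpha$ acting on $\mathfrak{u}_\alpha\oplus\mathfrak{u}_\alpha^\ast$; by Proposition~\ref{Kahlerroots} and the discussion preceding the statement, each $G_\alpha$ has the displayed form $G_\alpha=e^{-B_\alpha}\bigl(\pm\left(\begin{smallmatrix}0&g_\alpha^{-1}\\ g_\alpha&0\end{smallmatrix}\right)\bigr)e^{B_\alpha}$ with $g_\alpha=\tfrac{1}{x_\alpha}I_2$, $x_\alpha\in\mathbb{R}^\ast$, and $B_\alpha=-\tfrac{a_\alpha}{x_\alpha}S_\alpha^\ast\wedge A_\alpha^\ast\in\bigwedge^2\mathfrak{u}_\alpha^\ast$, so that $\mathfrak{G}_\alpha(\mathbb{F})=\mathbb{R}^\ast$ via $[G_\alpha]\mapsto x_\alpha$.

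Next I would carry out the assembly argument of Theorem~\ref{GlobalBField} in this setting: given two invariant generalized metrics $G,G'$ with the same parameter $x_\alpha$ for every $\alpha\in\Pi^+$, let $B_\alpha,B_\alpha'$ be the corresponding per-root $B$-transformations, put $\widehat B_\alpha=B_\alpha'-B_\alpha$ and $B=\bigoplus_{\alpha\in\Pi^+}\widehat B_\alpha\in\bigwedge^2(\mathfrak{n}^-)^\ast$; since the blocks do not interact and $\exp$ is additive on block-diagonal $2$-forms, one gets $e^{-B}Ge^B=G'$. Conversely, distinct tuples $(x_{\alpha_1},\dots,x_{\alpha_d})$ give distinct $\mathcal{B}$-orbits, because by Remark~\ref{SignatureKahler} a generalized metric modulo $B$-transformations is determined by its underlying Riemannian metric $g=\bigoplus_\alpha g_\alpha$, and $g_\alpha=\tfrac{1}{x_\alpha}I_2$ recovers $x_\alpha$. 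This yields the bijection $\mathfrak{G}(\mathbb{F})\xrightarrow{\ \sim\ }\prod_{\alpha\in\Pi^+}\mathfrak{G}_\alpha(\mathbb{F})=(\mathbb{R}^\ast)^d$.

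For the quotient by $\mathcal{W}$, I would argue exactly as in the computation of $\mathfrak{K}_a(\mathbb{F})/\mathcal{W}$ above: a representative $w\in N_U(\mathfrak{h})$ of the reflection swapping $\alpha\leftrightarrow-\alpha$ sends the block at $\alpha$ to the block at $-\alpha$ with $x_\alpha\mapsto x_{-\alpha}=-x_\alpha$ and $\mathcal{J}_0\leftrightarrow-\mathcal{J}_0$, so on each coordinate $\mathfrak{G}_\alpha(\mathbb{F})=\mathbb{R}^\ast$ the Weyl action identifies $x_\alpha$ with $-x_\alpha$; combining this with the permutation of factors, every class is $\mathcal{W}$-equivalent to one with all $x_\alpha>0$ (cf.\ Remark~\ref{x>0}), and no further identifications occur among these normalized representatives, so $\mathfrak{G}(\mathbb{F})/\mathcal{W}=(\mathbb{R}^+)^d$.

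The main obstacle I anticipate is the converse direction: verifying that per-root $B$-equivalence genuinely globalizes through the block-diagonal $B$ (the metric analogue of Theorem~\ref{GlobalBField}) together with the injectivity of $[G_\alpha]\mapsto x_\alpha$; once this is in place, the $\mathcal{W}$-quotient step is formally the same as in the generalized almost Kähler case already treated.
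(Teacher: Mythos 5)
Your proposal is correct and follows essentially the same route as the paper: the paper's argument is precisely the per-root computation $G_\alpha=e^{-B_\alpha}\bigl(\pm\left(\begin{smallmatrix}0&g_\alpha^{-1}\\ g_\alpha&0\end{smallmatrix}\right)\bigr)e^{B_\alpha}$ with $g_\alpha=\tfrac{1}{x_\alpha}I_2$ giving $\mathfrak{G}_\alpha(\mathbb{F})=\mathbb{R}^\ast$, followed by the block decomposition $G=\bigoplus_{\alpha\in\Pi^+}G_\alpha$ and the normalization $x_\alpha>0$ via the Weyl group. You merely make explicit two steps the paper leaves implicit (the assembly of the block-diagonal $B$ as in Theorem~\ref{GlobalBField} and the injectivity of $[G_\alpha]\mapsto x_\alpha$), which is a welcome addition but not a different argument.
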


\section{Examples: the cases $\mathfrak{sl}(2,\mathbb{C})$ and $\mathfrak{sl}(3,\mathbb{C})$}\label{S:7}
In this section we are interested in providing two elementary examples using the root space decomposition of 
$\mathfrak{sl}(n,\mathbb{C})$; see Example \ref{CaseSln}.
\begin{example}
	In $\mathfrak{sl}(2,\mathbb{C})$ we know that $\Pi=\lbrace \alpha,-\alpha \rbrace$. For this case we have that $\Sigma=\Pi^+=\lbrace \alpha\rbrace$ and $\mathbb{F}=\mathbb{P}^1$. All invariant generalized almost complex structures on $\mathbb{P}^1$ are determined using  either the usual complex structure or the KKS symplectic form $\omega$ plus a $B$-transformation. Moreover, all these structures are integrable (see \cite{VS}).
	\begin{enumerate}
		\item[$\iota.$] If $\Theta=\emptyset$ we have that $\mathbb{J}=\pm \mathcal{J}_0$.
		
		\item[$\iota\iota.$] If $\Theta=\lbrace \alpha \rbrace$ we have that $$\mathbb{J}={\tiny\left( 
			\begin{array}{cccc}
			a_\alpha & 0 & 0 & -x_\alpha\\ 
			0 &  a_\alpha& x_\alpha & 0\\
			0 & -y_\alpha & -a_\alpha & 0\\
			y_\alpha & 0 & 0 & -a_\alpha
			\end{array}%
			\right)},$$
		with $x_\alpha,y_\alpha,a_\alpha\in\mathbb{R}$ such that $a^2_\alpha=x_\alpha y_\alpha-1$. The invariant pure spinor line is given by $\varphi=e^{B_\alpha+i\omega_\alpha}$ with $\omega_\alpha=-\dfrac{k_\alpha}{x_\alpha}\omega_{b_0}$ and $B_\alpha=-\dfrac{a_\alpha}{x_\alpha}S_\alpha^\ast\wedge A_\alpha^\ast$.
	\end{enumerate}
	The module space $\mathfrak{M}(\mathbb{P}^1)=\mathbb{R}^\ast\cup \pm \mathbf{0}$. It is simple to check that up to action by the Weyl group there are two different classes of invariant generalized complex structures on $\mathbb{P}^1$, namely, $\mathcal{J}_0$ and $\mathcal{J}_\alpha$ with $x_\alpha>0$. Thus, $\mathfrak{M}(\mathbb{P}^1)/\mathcal{W}=(0,\infty)\cup \mathbf{0}$ and its compactification can be identified with $S^1$. Moreover, $\mathfrak{K}(\mathbb{P}^1)=\mathbb{R}^\dag$ and $\mathfrak{G}(\mathbb{P}^1)=\mathbb{R}^\ast$.
\end{example}

\begin{example}
Consider $\mathfrak{sl}(3,\mathbb{C})$. In this case we have that $\Pi=\lbrace \alpha,\beta ,\alpha+\beta,-\alpha, -\beta,-\alpha-\beta\rbrace$ and $\mathbb{F}$ is given by $\mathbb{F}(1,2)\subset \mathbb{P}^2\times \mathbb{P}^2$ where
$$\mathbb{F}(1,2)=\lbrace ([x_1:x_2:x_3],[y_1:y_2:y_3])\in \mathbb{P}^2\times \mathbb{P}^2:\ x_1y_1+x_2y_2+x_3y_3=0 \rbrace.$$
There are $27$ possible configurations of invariant generalized almost complex structures on $\mathbb{F}(1,2)$ where only 13 of these, which are described by Table \ref{integrab}, are integrable. Up to action by the Weyl group we have just the following cases 
\begin{center}
	\begin{tabular}{c|c|c}
		$\mathbb{J}_\alpha$ & $\mathbb{J}_\beta$ & $\mathbb{J}_{\alpha + \beta}$ \\
		\hline
		$\mathcal{J}_0$ & $\mathcal{J}_0$ & $\mathcal{J}_0$\\
		$ \mathcal{J}_\alpha$ & $\mathcal{J}_0 $ & $ \mathcal{J}_0$ \\
		$\mathcal{J}_\alpha$ & $\mathcal{J}_\beta$ & $\mathcal{J}_{\alpha+\beta} $
	\end{tabular}
\end{center}
where if $\mathbb{J}_\alpha$ is of noncomplex type then $x_\alpha>0$; see Remark \ref{x>0}. The moduli space $$\mathfrak{M}_a(\mathbb{F}(1,2))=(\mathbb{R}^\ast\cup \pm \mathbf{0})_\alpha \times( \mathbb{R}^\ast\cup \pm \mathbf{0})_\beta \times( \mathbb{R}^\ast\cup \pm \mathbf{0})_{\alpha+\beta},$$ 
and the invariant pure spinor are determined by Proposition \ref{Spinor}. Also, the moduli space of generalized almost Kähler structures and metrics are $$\mathfrak{K}_a(\mathbb{F}(1,2)) = \mathbb{R}^\dag _{\alpha}\times \mathbb{R}^\dag _{\beta} \times \mathbb{R}^\dag _{\alpha+\beta}\qquad\textnormal{and}\qquad \mathfrak{G}(\mathbb{F}(1,2))=(\mathbb{R}^\ast)^3,$$
respectively.

\end{example}




\section*{Acknowledgements}
We started this work during a meeting of the Working Group on Deformation Theory 
at the Abdus Salam International Center for Theoretical Physics (ICTP), Italy, in August 2019. 
We are grateful to both the Mathematics and the High Energy Sections for making our meeting possible. 
The authors also benefitted from meetings of  Network NT8 for Geometry and Physics, 
from the Office of External Activities, ICTP. We are grateful to Gil Cavalcanti for  valuable suggestions 
which helped us make corrections to the previous version of our work. 

E.G. was partially supported by a Simons Associateship ICTP.  

F.V.  thanks the support  Vicerrector\'ia de Investigaci\'on, Universidad Cat\'olica del Norte; Chile. 

C.V. was supported by FAPESP grant 2016/07029-2 and CAPES Brazil - Finance Code 001.

\end{document}